\documentclass[thmsa,12pt]{article}
\normalfont\sffamily
\usepackage{amsfonts}
\usepackage{amssymb}
\usepackage[active]{srcltx}
\usepackage{color}
\usepackage[x11names]{xcolor}
\usepackage{amsmath}
\usepackage{amsthm}
\usepackage{bm} 
\usepackage{graphicx}
\usepackage{ifthen}
\usepackage{xcolor}
\usepackage{textcomp}
\usepackage{a4wide}
\usepackage{enumitem}
\usepackage{multirow}
\usepackage{diagbox}
\usepackage{color}
\usepackage[format=hang]{caption}
\usepackage{subcaption}
\usepackage{url}
\usepackage{xfrac}

\usepackage[normalem]{ulem}
\usepackage{comment}
\usepackage{pxfonts}
\usepackage[margin=2cm]{geometry} 
\usepackage{titlesec}

\usepackage{setspace}
\usepackage{footmisc}
\usepackage{accents}
\usepackage{appendix}

\usepackage{array,booktabs,ragged2e}
\newcolumntype{R}[1]{>{\RaggedRight}p{#1}}

\usepackage{cellspace}
\usepackage{hyperref}
\usepackage{fancyhdr}
\usepackage{footnote}
\makesavenoteenv{tabular}

\newboolean{showtodos}
\setboolean{showtodos}{false} 
\newcommand{\todo}[1]{
  \ifthenelse{\boolean{showtodos}}{
    {\begin{center}\color{red}\textbf{ToDo: } #1\end{center}}%
  }{}%
}

\newtheorem{theorem}{Theorem}
\newtheorem{lemma}{Lemma}

\newtheorem{proposition}{Proposition}
\newtheorem{corollary}{Corollary}

\newtheorem{remark}{Remark}

\def \be {\begin{equation}}
\def \ee {\end{equation}}

\newcommand{\F}{\mathbb{F}}
\newcommand{\N}{\mathbb{N}}

\newcommand{\Z}{\mathbb{Z}}

\newcommand{\cM}{\mathcal{M}}

\newcommand{\PG}{\operatorname{PG}}
\newcommand{\Res}{\operatorname{Res}}

\newcommand{\wt}{\operatorname{wt}}
\newcommand{\supp}{\operatorname{supp}}

\usepackage[numbers]{natbib}
\begin{document}

\setstretch{1.1}

\title{Optimal self-orthogonal binary linear codes}
\author{Sascha Kurz\thanks{I am grateful to Hirotaka Shimizu for several remarks on an earlier draft.}\\ \normalsize
	Dept.\ of Mathematics, University of Bayreuth, 95440 Bayreuth, Germany\\email: sascha.kurz@uni-bayreuth.de}
\date{October 29, 2026} 

\maketitle

\begin{abstract}\noindent
We complete the determination of the minimum possible length of eight-dimensional binary self-orthogonal codes with given minimum distance. For dimension nine we give exact values and bounds for minimum distance at most $64$. By exploiting the geometric reformulation of linear codes as multisets of points, we fully redo the classification for dimension at most seven in a simplified and computer-free manner.
As a by-product we prove the non-existence of $[93,9,44]_2$ codes and show several divisibility results for putative optimal nine-dimensional binary codes.

\medskip

\noindent
\textbf{Keywords:} self-orthogonal codes $\cdot$ linear codes $\cdot$ Griesmer bound $\cdot$ Galois geometry\\
\textbf{2020 Mathematics Subject Classification:} Primary 94B05; Secondary 51E21, 94B27, 94B65
\end{abstract}

\setstretch{1.26} 



\pagenumbering{arabic}
\renewcommand{\headrulewidth}{0pt}
\renewcommand{\footrulewidth}{0pt}


\section{Introduction}
\label{sec:Introduction1}
A linear code is called self-orthogonal if it is contained in its dual code w.r.t.\ the standard inner product. They have been studied since the theory of error-correcting codes transitioned from an engineering tool into a mathematical discipline, see e.g.\ \cite{pless1972classification}. There are e.g.\ connections to lattices~\cite{wan2001characteristic}, designs \cite{assmus1969new,tonchev1989self}, groups \cite{conway1999sphere,key2007permutation}, DNA codes \cite[Section 8]{Shi2023Classification}, and quantum error-correcting codes \cite{calderbank1996good}. We call a linear $[n,k,d]_q$ code (length) optimal if there does not exist a linear $[n',k,d]_q$ with $n'<n$. By $n_q(k,d)$ we denote the minimum possible length of an $[n,k,d]_q$ code. The function $n_2(k,\cdot)$ is completely known for all $k\le 8$ \cite{bouyukliev2000smallest}. The majority of the current knowledge on $n_q(k,d)$ can be looked up in the online table \cite{grassl_codetables}. If we additionally require that the linear code has to be self-orthogonal we use the notation $\tilde{n}_q(k,d)$ for the minimum possible length. In terms of distance optimality, $\tilde{n}_2(4,\cdot)$ was completely
determined in \cite{li2008classification}. In a sequence of papers this was extended to $\tilde{n}_2(k,\cdot)$ for all $k\le 7$, see \cite{shi2025binary} and the references cited therein. Here we completely determine $\tilde{n}_2(8,\cdot)$, cf.\ \cite{kurz2025optimal} where the optimal parameters of $8$-dimensional binary codes corresponding to additive quaternary codes were recently determined. For the binary case dimension $8$ currently seems to be a frontier since still many values of $n_2(9,d)$ are unknown. So, in Theorem~\ref{thm_SO_k_9} and Corollary~\ref{cor_SO_equals_BLC} we give a few exact values and bounds for $\tilde{n}_2(9,d)$ when $d\le 64$. See e.g.\ \cite{an2025shortest,Lim2026ConstraintPreserving,shimizu2026selforthogonal} for some very recent activity in this direction. While the problem can be formulated for arbitrary field sizes $q$, here we mainly restrict ourselves to $q=2$. For recent constructions of self-orthogonal linear codes over $\F_q$ with $q\ge 3$ see e.g.\ \cite{li2026construction,li2024construction,xiang2026ternary,xie2026optimal,zhang2026new}. Full classifications of self-orthogonal codes with small parameters have been obtained in \cite{BouyuklievOstergaard2006Binary} for $q=2$ and in \cite{bouyukliev2005classification} for $q\in \{3,4\}$. There is a well known correspondence between linear codes and multisets of points in projective spaces
\cite{dodunekov1998codes}, see e.g.\ \cite{landjev2000linear} for survey on known results for linear codes and their geometric reformulations.
However, in the context of self-orthogonal codes we are not aware of papers extensively using these methods. So, we will do so in this paper.

The remaining part is structured as follows. In Section~\ref{sec_preliminaries} we introduce the necessary preliminaries before we determine $\tilde{n}_2(k,\cdot)$ for all $k\le 7$ in Section~\ref{sec_opt_at_most_seven}. The determination of $\tilde{n}_2(8,\cdot)$ is presented in Section~\ref{sec_opt_8}, see Theorem~\ref{main_thm}. In Section~\ref{sec_opt_9} we state and improve the current knowledge on $\tilde{n}_2(9,d)$ for $d\le 64$, see Theorem~\ref{thm_SO_k_9}. As a by-product we prove the non-existence of $[93,9,44]_2$ codes in Theorem~\ref{thm_no_93_9_44} and show several divisibility results for putative optimal nine-dimensional binary codes in Proposition~\ref{prop_opt_implies_divisibility}. Generator matrices of examples found by computer searches are listed in Appendix~\ref{sec_generator_matrices_of_SO_codes}.

\section{Preliminaries}
\label{sec_preliminaries}
In this section we introduce the basic notation and first auxiliary results. We start with the coding theoretic formulation in Subsection~\ref{subsec_coding_theoretic_notation}. The well-known reformulation of linear codes as multisets of points in projective geometries, see e.g.\ \cite[{\S}1.1.2]{tsfasman1991agcodes} or \cite{dodunekov1998codes}, is briefly outlined in Subsection~\ref{subsec_geometric_notation}. For the reader's sake we
provide proofs whenever they are short, instructive, or we did not find
our precise formulation in the literature, see e.g.\ the characterization
results for binary self-orthogonal codes in Proposition~\ref{prop_SO_characterization} and Lemma~\ref{lemma_geometric_SO}. However, we claim no priority at all.
It is well known that the minimum possible length of a linear code with given dimension, field size, and minimum distance $d$ is given by the so-called Griesmer bound, see Inequality~(\ref{eq_griesmer_bound}) for all sufficiently large $d$ and attained by the Solomon--Stiffler construction \cite{griesmer1960bound,solomon1965algebraically}.
If we additionally require the linear codes to be self-orthogonal, we have to slightly modify the Griesmer bound, see Inequality~(\ref{ie_Griesmer_SO}), and the attaining construction, see Subsection~\ref{subsec_geometric_notation}. In order to make the paper self-contained we provide more technical details on the Griesmer bound and the Solomon--Stiffler construction in Subsection~\ref{subsec_Solomon_Stiffler}.

\subsection{Coding theoretic notation}
\label{subsec_coding_theoretic_notation}
Let $\F_q$ denote the field with $q$ elements and $[n]:=\{1,\dots,n\}$. A \emph{linear} $[n,k]_q$ code $C$ is a $k$-dimensional subspace of $\F_q^n$.
If the rowspace of $G\in\F_q^{k\times n}$ equals $C$, we speak of a \emph{generator matrix}. The parameter $n$ is called the \emph{length} and the parameter $k$ is
called the \emph{dimension} of $C$. Elements of $C$ are called \emph{codewords}. For any $c=\left(c_1,\dots,c_n\right)\in\F_q^n$ the
\emph{support} of $c$ is defined as
$
   \supp(c):=\left\{i\in[n]\,:\,c_i\neq 0\right\}
$
and the size of the support $|\supp(c)|$ is called the
\emph{Hamming weight} $\wt(c)$ of $c$.
Multiplying all entries at some position $1\le i\le n$ by some $\lambda\in\F_q^*:=\F_q\backslash\{0\}$ does not change the weights of the codewords, which is also true for permutations of the columns. In those cases we call the corresponding codes \emph{equivalent}.\footnote{For non-prime fields $\F_q$ also the application of a~field automorphism to the elements in all coordinate positions yields codes that are equivalent in the same sense.} Having columns of generator matrices in mind, we also call $g\in \F_q^k$ and $\lambda g$ for
some $\lambda\in\F_q^*$ equivalent.
The \emph{minimum (Hamming) distance} of a linear code $C$ is defined as the smallest non-zero Hamming weight $\wt(c)$ over all codewords $c\in C$ and the support of $C$ is given by
$\supp(C)=\cup_{c\in C} \supp(c)$. We call $|\supp(C)|$ the \emph{effective length} and of \emph{full length} if $|\supp(C)|=n$. If $C$ is an $[n,k]_q$ code with minimum distance at least $d$\footnote{We use this convention to avoid the notation of an $[n,k,\ge d]_q$ code at several places. If needed in this paper, the existence of a codeword of weight $d$ will always follow from $n<n_q(k,d+1)$.}, we speak of an $[n,k,d]_q$ code. By $n_q(k,d)$ we denote the minimum possible length of an $[n,k, d]_q$ code, which is lower bounded by the so-called \emph{Griesmer bound}
\cite{griesmer1960bound,solomon1965algebraically}
\begin{equation}
  \label{eq_griesmer_bound}
  n_q(k,d)\ge \sum_{i=0}^{k-1} \left\lceil\frac{d}{q^i}\right\rceil=:g_q(k,d).
\end{equation}
If $n=g_q(k,d)$, then we speak of a \emph{Griesmer code}. They exist for all sufficiently large $d$ \cite{solomon1965algebraically}, so that $n_q(k,d)=g_q(k,d)$ and the determination of $n_q(k,d)$ as a function of $d$ becomes a finite problem.
For the current state of knowledge on bounds for $n_q(k,d)$ we refer to the online table~\cite{grassl_codetables}.
Interpreted as columns of a generator matrix, a $k$-dimensional \emph{simplex code} $\mathcal{S}_q(k)$ over $\F_q$ is given by the $[k]_q:=\tfrac{q^k-1}{q-1}$ non-equivalent vectors in $\F_q^k\backslash\{0\}$ and the \emph{first order Reed--Muller code} $\mathcal{RM}_q(1,k)$ is given by the $q^k$ vectors in $\F_q^{k+1}$ starting with a $1$. These are $\left[[k]_q,k,q^{k-1}\right]_q$ and $\left[q^k,k+1,q^k-q^{k-1}\right]_q$ codes, respectively, see e.g.\ \cite{huffman2003fundamentals}.
A linear code is called \emph{$\Delta$-divisible} if the weights of all codewords are divisible by $\Delta$. E.g., $\mathcal{S}_q(k)$ and $\mathcal{RM}_q(1,k)$ are Griesmer codes and $q^{k-1}$-divisible for all $k\ge 1$.
\begin{theorem}\label{thm_griesmer_divisible}
  (\cite[Theorem 1]{Ward1998}, \cite{landjev2001geometric}, or \cite{Dodunekov1984Minimum} for $q=2$)
  If $C$ is an $[n,k,d]_p$ Griesmer code, where $p$ is a prime, and $p^r$ divides $d$, then $C$ is $p^r$-divisible.
\end{theorem}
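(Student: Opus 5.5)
We prove Ward's divisibility theorem for Griesmer codes by induction on $k$, using the residual code construction and the standard fact that Griesmer codes have Griesmer residuals. For $k=1$ the code is $d$ times a single column, so every nonzero weight equals $d$ and there is nothing to show. Let $k\ge 2$, let $C$ be an $[n,k,d]_p$ Griesmer code with $p^r\mid d$, and pick any codeword $c$ of weight $w>0$. It suffices to show $p^r\mid w$.

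\textbf{Residuals are Griesmer.} Take a codeword $c_0$ of weight exactly $d$ and delete its support. The resulting residual $\Res(C,c_0)$ is an $[n-d,k-1,d']_p$ code with $d'\ge\lceil d/p\rceil$. The Griesmer bound gives
\[
n-d\ \ge\ g_p(k-1,\lceil d/p\rceil)=\sum_{i=1}^{k-1}\lceil d/p^i\rceil=n-d .
\]
Hence the residual is Griesmer with minimum distance exactly $d/p$. Its minimum distance is divisible by $p^{r-1}$, so by induction it is $p^{r-1}$-divisible. The same argument works for any codeword $c'$ whose weight is small enough that the residual stays Griesmer.

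\textbf{Transferring divisibility back to $C$.} Take $c$ of weight $w$ and consider the two-dimensional subcode $\langle c_0,c\rangle$. Over $\F_p$, for each $\lambda\in\F_p$ the codeword $c+\lambda c_0$ restricted to the complement of $\supp(c_0)$ is a codeword of the residual. Its weight there, $w-|\supp(c)\cap\supp(c_0)|$, is therefore divisible by $p^{r-1}$ if nonzero, and it is independent of $\lambda$. Summing the weights of all nonzero codewords in $\langle c_0,c\rangle$ gives the standard identity that the total weight of a two-dimensional code equals $(p^2-p)$ times its effective length. Combining this identity with the restricted weights gives a congruence. It relates $w$, $d$, and the weights $\wt(c+\lambda c_0)$ restricted to $\supp(c_0)$, modulo $p^r$.

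To close the argument I would strengthen the induction statement in the Ward style. The claim becomes: for a Griesmer code with $p^r\mid d$, every codeword's weight restricted to any residual support behaves modulo $p^{r}$. Then run a second induction on $r$. The case $r=0$ is trivial. For the step from $r$ to $r+1$, use that $c_0$ and $c$ span a subcode whose restriction to $\supp(c_0)$ is a code on $d$ coordinates. Also use that all codewords $c+\lambda c_0$ with $\lambda\neq 0$ have the same outside weight.

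\textbf{Main obstacle.} The hard step is exactly this lift from $p^{r-1}$-divisibility of the residual to $p^r$-divisibility of $C$. The naive counting only gives $p^{r-1}$ plus information modulo $p$. If the direct strengthening fails, my fallback is Ward's original route. This expresses weights via characters: for $p$-divisibility one uses $\wt(c)\equiv \sum_j c_j^{p-1}$. Higher $p$-power divisibility then follows from the Ax/McEliece-type theorem applied to the multiset of columns. Griesmer-ness ensures that the columns form a combination of subspaces, which is the Solomon--Stiffler-like structure proven in the references. For $q=2$ one may instead cite Dodunekov--Manev's direct argument, which inducts via residuals with respect to codewords of weight $d$ and $2d$.
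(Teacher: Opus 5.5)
Your reduction step is correct: for $x$ of weight $d$, $\Res(C;x)$ is an $[n-d,k-1,d/p]_p$ Griesmer code, so it is $p^{r-1}$-divisible by induction. The gap is the step you flag yourself, and it is a missing idea rather than a technicality. You try to prove $p^r\mid\wt(c)$ for an \emph{arbitrary} $c$ from the residual with respect to one fixed $c_0$, but that information cannot determine $\wt(c)$. For $p=2$, let $y_r$ be the image of $y$ in $\Res(C;x)$. Equations~(\ref{eq_weight_sum}) and~(\ref{eq_weight_residual}) give $\wt(x+y)=\wt(x)-\wt(y)+2\wt(y_r)\equiv-\wt(y)\pmod{2^r}$. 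So the residual only \emph{transfers} divisibility from $y$ to $x+y$. You therefore need a generating set of codewords whose weights are already divisible by $2^r$ and whose residuals are Griesmer. What supplies this is that a Griesmer code is spanned by its codewords of weight $d$ (Theorem~\ref{thm_gen_mat_plus_t_q_2} with $t=0$). Induction on the number of rows in a sum, which is precisely Lemma~\ref{lemma_divisible_row_span}, then proves the binary case. Your strengthened hypothesis (weights restricted to residual supports ``behave modulo $p^r$'') is never stated precisely enough to replace this.

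For odd $p$, even a spanning set of weight-$d$ codewords is not enough, and your sketch does not address this. The only relation the residual gives is $\sum_{\lambda\in\F_p}\wt(y+\lambda x)=p\,\wt(y_r)+(p-1)\wt(x)\equiv 0\pmod{p^r}$. This controls the sum over $\lambda$, not the individual weights. Those depend on how the ratios $y_j/x_j$ are distributed over $\supp(x)$, which $\Res(C;x)$ cannot see. So the prime-field case needs a further divisibility tool beyond residual counting, and your fallback does not provide one. The cited references do not establish that Griesmer-ness forces the column multiset to be a Solomon--Stiffler-like combination of subspaces. No such classification of Griesmer codes is available; the paper stresses that Solomon--Stiffler codes are not the only Griesmer codes. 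Moreover, such a representation with all subspaces of dimension $>r$ would give $p^r$-divisibility at once, since an $s$-space yields a $p^{s-1}$-divisible code. Assuming it is therefore assuming the conclusion. Deferring to Dodunekov--Manev for $q=2$ is a citation, not a proof. The paper itself only cites this theorem, but its Theorem~\ref{thm_gen_mat_plus_t_q_2} and Lemma~\ref{lemma_divisible_row_span} contain exactly the binary step you are missing.
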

\noindent
For the divisibility of Griesmer codes when the field size $q$ is a prime power see  e.g.\ \cite{deng2026divisibility,deng2026Ward,zhang2026hyperplane}. In Lemma~\ref{lemma_divisible_row_span} we will show a divisibility criterion for binary non-Griesmer codes.
For each $S\subseteq[n]$ by $C_S$ we
denote the linear code consisting of all elements of an $[n,k]_q$ code $C$ restricted to the coordinates in $S$. With this the \emph{residual code}
$\Res(C;c)$ of $C$ w.r.t.\ a codeword $c\in C$ is given
by $C_{[n]\backslash\supp(c)}$ and has length $n-\wt(c)$ as well as a dimension of at most $k-1$. If $\wt(c)<\frac{dq}{q-1}$ the dimension is exactly $k-1$. Unless we explicitly say otherwise, here a residual code is always meant w.r.t.\ to a non-zero codeword.
\begin{lemma} (E.g.~\cite{dodunekov1985optimal,dodunekov1984minimal})
  \label{lemma_residual_code_bound}
  If an $[n,k,d]_q$ code $C$ contains a codeword $c$ of weight $w$ with $w<\frac{dq}{q-1}$, then the residual code of
  $C$ with respect to $c$ is an $\left[n-w,k-1,d-w+\left\lceil \tfrac{w}{q}\right\rceil\right]_q$-code. The residual code has full length iff $C$ has.
\end{lemma}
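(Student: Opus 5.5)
We prove the three claims of Lemma~\ref{lemma_residual_code_bound} in turn: the length, the dimension, and the minimum distance of $\Res(C;c)$, and then full length. Write $S=\supp(c)$, $|S|=w$, and $R=[n]\backslash S$. The residual code $C_R$ clearly has length $n-w$. The restriction map $\pi\colon C\to C_R$ is linear, and $\lambda c\in\ker\pi$ for all $\lambda\in\F_q$. So we need $\ker\pi=\langle c\rangle$, and the dimension statement will follow from the distance bound.

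The key step is the distance estimate. Let $c'\in C$ with $c'\notin\langle c\rangle$. For each $\lambda\in\F_q^*$ consider $c'-\lambda c$. For a position $i\in S$, the entry $c'_i-\lambda c_i$ vanishes for exactly one $\lambda$ when $c'_i\neq 0$, namely $\lambda=c'_i/c_i$. If $c'_i=0$, it vanishes for no $\lambda\in\F_q^*$. Summing over $\lambda\in\F_q^*$, the number of zero entries of $c'-\lambda c$ on $S$ totals at most $w$. By averaging, some $\lambda$ gives $c'-\lambda c$ at least $\lceil w/q\rceil$ zeros on $S$ if we also include $\lambda=0$. This uses pigeonhole over all $q$ values of $\lambda\in\F_q$: every $i\in S$ is a zero of exactly one of the $q$ vectors $c'-\lambda c$, since $\lambda=c'_i/c_i$ is unique.

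Fix such a $\lambda$. The vector $c'-\lambda c$ is nonzero because $c'\notin\langle c\rangle$, so it has weight at least $d$. Its weight on $S$ is at most $w-\lceil w/q\rceil$. Hence its weight on $R$, which equals $\wt(\pi(c'))$, is at least $d-w+\lceil w/q\rceil$. This gives the minimum distance bound.

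The dimension claim follows from the hypothesis. The assumption $w<\frac{dq}{q-1}$ is equivalent to $d-w+\lceil w/q\rceil\ge d-w(q-1)/q>0$. Thus $\pi(c')\neq 0$ for every $c'\notin\langle c\rangle$, so $\ker\pi=\langle c\rangle$ and $\dim C_R=k-1$.

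For full length, note that $\supp(C)\supseteq S$ always holds. Moreover $\supp(C_R)=\supp(C)\cap R$, so $\supp(C)=[n]$ if and only if $\supp(C_R)=R$.

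The only delicate point is the averaging step. We should include $\lambda=0$ and use the uniqueness of the vanishing $\lambda$ per coordinate, rather than an inequality; this is what gives exactly $\lceil w/q\rceil$.
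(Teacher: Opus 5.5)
Your proof is correct. Counting zeros on $\supp(c)$ over all $q$ vectors $c'-\lambda c$, $\lambda\in\F_q$ (each coordinate of $\supp(c)$ vanishes for exactly one $\lambda$), gives the distance bound $d-w+\lceil w/q\rceil$. The hypothesis $w<\frac{dq}{q-1}$ is exactly what makes this bound positive, which forces $\ker\pi=\langle c\rangle$. The full-length claim then follows from $\supp(C_R)=\supp(C)\cap R$ together with $\supp(c)\subseteq\supp(C)$.

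The paper gives no proof of its own here; it quotes the lemma from Dodunekov et al., and your averaging argument is the standard one. Two small cleanups: drop the stray sentence about summing over $\F_q^*$, which your count over all of $\F_q$ supersedes, and note that $c\neq 0$ (the paper's standing convention) is needed for the dimension to drop to $k-1$.
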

\noindent
Some authors speak of the \emph{one-step-Griesmer bound}. The exclusion of codeword weights $w$ via $n-w<n_q\!\left(k-1,d-w+\left\lceil \tfrac{w}{q}\right\rceil\right)$ and Lemma~\ref{lemma_residual_code_bound} is called \emph{residual code argument}. The residual code of a Griesmer code w.r.t.\ a minimum weight codeword is a Griesmer code again, see e.g.\ \cite[Lemma 29]{bouyukliev2009classification}, \cite{deng2026generalized}, or Lemma~\ref{lemma_griesmer_observations}. Applying Lemma~\ref{lemma_residual_code_bound} with minimum weight codewords recursively yields the Griesmer bound \ref{eq_griesmer_bound}). The residual code of a codeword in a $q^r$-divisible $[n,k,d]_q$ code  is $q^{r-1}$-divisible if $k,r\ge 1$, see e.g.\ \cite{kurz2021divisible,ward2001divisible} for surveys on divisible codes.
For $x,y\in \F_q^n$ the \emph{scalar product} is given by $\langle x,y\rangle=\sum_{i=1}^n x_iy_i$. With this the \emph{dual} of an
$[n,k]_q$ code $C$ is given by the $[n,n-k]_q$ code
$
  C^\perp=\left\{y\in\F_q^n\,:\, \langle x,y\rangle=0\,\forall x\in C\right\}$.
The minimum distance of the dual code is called the \emph{dual minimum distance} and denoted by $d^\perp$. We call a linear code \emph{projective} if $d^\perp\ge 3$.
If $C\subseteq C^\perp$ then $C$ is called \emph{self-orthogonal} (SO) and \emph{self-dual} if $C=C^\perp$. So, an $[n,k]_q$ code $C$ is self-orthogonal iff $\langle c,c'\rangle=0$ for all (possibly equal) pairs of (non-zero) codewords $c,c'\in C$. By $\tilde{n}_q(k,d)$ we denote the minimum possible length of an $[n,k,d]_q$ SO code. Clearly, we have $\tilde{n}_q(k,d)\ge n_q(k,d)$, which we call the \emph{coding lower bound}.

\medskip

Here we will mainly consider \emph{binary codes}, i.e.\ $q=2$.\footnote{Note that e.g.\ self-orthogonality is not preserved in general by monomial transforms for odd $q>3$, see e.g.\ the generator matrices $\begin{pmatrix} 1 & 2 & 0 & 0\\ 0&0&1&2\end{pmatrix}$ and $\begin{pmatrix} 1 & 4 & 0 & 0\\ 0&0&1&2\end{pmatrix}$ over $\F_5$. So, there e.g.\ does not exist an equivalent geometric reformulation in terms of multisets of points, see Subsection~\ref{subsec_geometric_notation}.} Let
$C$ be an $[n,k,d]_2$ code. We speak of \emph{even},
\emph{doubly-even}, and \emph{triply-even} codes if $C$ is $2$-, $4$-,
or $8$-divisible, respectively. A code that is even but not doubly-even
is called \emph{singly-even}. Moreover, we have
\begin{equation}
  \label{eq_weight_sum}
  \wt(c+c')=\wt(c)+wt(c')-2\cdot|\supp(c)\cap\supp(c')|
\end{equation}
for each $c,c'\in C$.
Using the \emph{Schur product} (Hadamard product) $c\star c'$,
i.e.\ componentwise multiplication, we have
\begin{equation}
  |\supp(c)\cap\supp(c')|=\wt(c\star c').
\end{equation}
If $c,c'\in C\backslash\{0\}$ with $c\neq c'$  and $c'_r$ denotes the codeword corresponding to $c'$ in $\operatorname{Res}(C;c)$, then we have
\begin{equation}
  \label{eq_weight_residual}
  \wt(c'_r)=\wt(c')-|\supp(c)\cap\supp(c')|=\wt(c')-\wt(c\star c').
\end{equation}
Let us now characterize binary SO codes. Clearly, an $[n,1]_2$ code is SO iff it is even.
\begin{proposition}
  \label{prop_SO_characterization}
 Let $C$ be an $[n,k]_2$ code, where $k\ge 2$. Then, the following statements are equivalent:
 \begin{enumerate}[noitemsep]
  \item[(A)] $C$ is SO.
  \item[(B)] $C$ is even and the doubly-even codewords form a subcode of dimension $k-1$ or $k$.
  \item[(C)] All residual codes are even.
  \item[(D)] The rows of a generator matrix $G$ have even weights and are pairwise orthogonal.
 \end{enumerate}
\end{proposition}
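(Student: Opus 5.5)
The plan is to prove (A)$\Leftrightarrow$(D) and (A)$\Leftrightarrow$(B) directly, and then (A)$\Leftrightarrow$(C) using Equation~(\ref{eq_weight_residual}). The key binary facts are $\langle c,c\rangle\equiv \wt(c)\pmod 2$ and $\langle c,c'\rangle\equiv \wt(c\star c')\pmod 2$.

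For (A)$\Leftrightarrow$(D): (A)$\Rightarrow$(D) is immediate, since every row $g$ of $G$ satisfies $\langle g,g\rangle=0$, i.e.\ $\wt(g)$ is even, and distinct rows are orthogonal. Conversely, the scalar product is bilinear. If all basis vectors satisfy $\langle g_i,g_j\rangle=0$ for all $i,j$ (including $i=j$, which is the even-weight condition), then $\langle x,y\rangle=0$ for all $x,y\in C$ by expanding in the basis.

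For (A)$\Leftrightarrow$(B), I would use Equation~(\ref{eq_weight_sum}) in the form $\wt(c+c')\equiv\wt(c)+\wt(c')-2\wt(c\star c')\pmod 4$. Assume (A). Then all weights are even and $\wt(c\star c')$ is even, so $\wt(c+c')\equiv \wt(c)+\wt(c')\pmod 4$. Hence $c\mapsto \wt(c)/2 \bmod 2$ is a linear functional $C\to\F_2$, and its kernel, which is the set of doubly-even codewords, has dimension $k-1$ or $k$. Conversely, assume (B) and let $D$ be the doubly-even subcode. If $c,c'\in D$, then $c+c'\in D$, so $2\wt(c\star c')\equiv 0\pmod 4$ and $\wt(c\star c')$ is even, i.e.\ $\langle c,c'\rangle=0$. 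If $D=C$ we are done. Otherwise pick $e\in C\setminus D$, so that $C=D\cup(e+D)$. For $c\in D$ we have $e+c\notin D$, so $\wt(e+c)\equiv 2\pmod 4$; with $\wt(e)\equiv 2$ and $\wt(c)\equiv 0$ this gives $\wt(e\star c)$ even. Finally $\langle e,e\rangle=\wt(e)\bmod 2=0$. Bilinearity then yields (A).

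For (A)$\Leftrightarrow$(C), let $c\neq 0$ and $c'\in C$. By Equation~(\ref{eq_weight_residual}) the residual codeword has weight $\wt(c')-\wt(c\star c')$; for $c'=c$ the residual is $0$. Under (A), both $\wt(c')$ and $\wt(c\star c')$ are even, so every residual code is even. For the converse, assume all residual codes are even. First I need $C$ itself to be even. Since $k\ge 2$, for any $c'\neq 0$ I can choose $c\notin\{0,c'\}$. Applying the residual condition to both $c$ and $c+c'$ gives that $\wt(c')-\wt(c\star c')$ and $\wt(c')-\wt((c+c')\star c')$ are even. Now $\wt((c+c')\star c')=\wt(c')-\wt(c\star c')$, so subtracting yields $2\wt(c\star c')-\wt(c')$ even, hence $\wt(c')$ is even. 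Then $\wt(c\star c')$ is even for all $c\neq c'$, giving orthogonality, and (A) follows. This evenness step is the only slightly delicate point and is exactly where the hypothesis $k\ge 2$ is used; the remaining steps are routine parity bookkeeping.
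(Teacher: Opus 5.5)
Your proposal is correct and follows essentially the same route as the paper: the same parity bookkeeping via Equations~(\ref{eq_weight_sum}) and~(\ref{eq_weight_residual}), with (D)$\Leftrightarrow$(A) by bilinearity. The only differences are minor. For (B)$\Rightarrow$(A) you use the coset decomposition $C=D\cup(e+D)$, where the paper notes that $\{c,c',c+c'\}$ contains zero or two singly-even words. For (C)$\Rightarrow$(A) you compare the residuals w.r.t.\ $c$ and $c+c'$ to get $\wt(c')$ even directly, whereas the paper compares residuals w.r.t.\ $c$ and $c'$ to show that all non-zero weights share a parity, and then excludes the all-odd case via Equation~(\ref{eq_weight_sum}).
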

\begin{proof}
  Let us first assume that $C$ is SO, i.e.\ property (A). Since $\langle c,c\rangle=0$ for every codeword we have $\wt(c)\equiv 0\pmod 2$, i.e.\ $C$ is even.
  For two different codewords $\langle c,c'\rangle=0$ implies
  $|\supp(c)\cap\supp(c')|=\wt(c\star c')\equiv 0\pmod 2$. Using Equation~(\ref{eq_weight_sum}) we conclude that the sum of two doubly-even codewords and the sum of two singly-even codewords is always doubly-even, i.e.\ the set of doubly-even codewords forms a subcode of dimension at least $k-1$. For a codeword $c'_r\in\operatorname{Res}(C,c)$ corresponding to $c'\in C$ Equation~(\ref{eq_weight_residual}) gives
  $\wt(c_r')\equiv 0\pmod 2$, i.e., the residual code is even.

  Now let us assume that $C$ satisfies (B). Since $C$ is even we have $\langle c,c\rangle=0$. For each pair of different codewords $c,c'\in C$
  property (B) implies that the number of singly-even codewords of $\{c,c',c'+c\}$ is either zero or two. Thus, Equation~(\ref{eq_weight_sum})
  implies $|\supp(c)\cap\supp(c')|=\wt(c\star c')\equiv 0\pmod 2$, i.e.\
  $\langle c,c'\rangle=0$.

  Let us now assume that $C$ satisfies (C). Due to Equation~(\ref{eq_weight_residual}) it suffices to show that $C$ is even in order to conclude that $C$ is SO. From Equation~(\ref{eq_weight_residual}) we conclude $\wt(c')\equiv \wt(c\star c')\equiv \wt(c)\pmod 2$ for all pairs of different codewords $c,c'\in C\backslash\{0\}$, i.e.\ the non-zero weights are either all even or all
  odd. The latter case is impossible for $k\ge 2$ due to Equation~(\ref{eq_weight_sum}).

  The implication (D) $\Rightarrow$ (A) follows from the bilinearity of the scalar product. The other direction is obvious.
\end{proof}

\smallskip

\noindent
Property (B) is a refinement of the commonly stated implications that each SO binary code is even and that each doubly-even binary code is SO. Similarly, for each binary code Equation~(\ref{eq_weight_sum}) implies that the set of even codewords forms a subcode of dimension at least $k-1$ in each binary $[n,k]_2$ code. The implication (A) $\Rightarrow$ (B) is e.g.\ mentioned in \cite[Section II]{conway1990new}. For the other direction see e.g.\ \cite[Proposition 3.2]{li2025constributionsbinaryselforthogonal}. Lemma~\ref{lemma_residual_code_bound} and property (C) directly imply the following improvement of Lemma~\ref{lemma_residual_code_bound} for binary SO codes.
\begin{lemma}
  \label{lemma_residual_SO_code_bound}
  If an $[n,k,d]_2$ SO code $C$ contains a codeword $c$ of weight $w$ with $w<2d$ and $d-\tfrac{w}{2}\equiv 1\pmod 2$, then the residual code of
  $C$ with respect to $c$ is an $\left[n-w,k-1,d-\tfrac{w}{2}+1\right]_2$-code.
\end{lemma}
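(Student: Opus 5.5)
Since $C$ is SO, it is even by Proposition~\ref{prop_SO_characterization}, so $w$ is even and $\tfrac{w}{2}$ is an integer. For $q=2$ the hypothesis $w<\tfrac{dq}{q-1}=2d$ is exactly the hypothesis of Lemma~\ref{lemma_residual_code_bound}. That lemma therefore gives: $\Res(C;c)$ is an $\left[n-w,k-1,d-w+\lceil \tfrac{w}{2}\rceil\right]_2=\left[n-w,k-1,d-\tfrac{w}{2}\right]_2$ code. In particular it has dimension exactly $k-1$, and its minimum distance is at least $d-\tfrac{w}{2}$.

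The improvement by one comes from parity. By the implication (A) $\Rightarrow$ (C) of Proposition~\ref{prop_SO_characterization}, every residual code of $C$ is even, so every nonzero codeword of $\Res(C;c)$ has even weight. Equation~(\ref{eq_weight_residual}) shows this directly as well: $\wt(c'_r)=\wt(c')-\wt(c\star c')$, and both terms are even because $C$ is even and $\langle c,c'\rangle=0$. Each nonzero residual weight is thus an even integer that is at least $d-\tfrac{w}{2}$. Since $d-\tfrac{w}{2}$ is odd by assumption, each such weight is at least $d-\tfrac{w}{2}+1$.

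Under the paper's convention that an $[n,k,d]$ code has minimum distance at least $d$, this gives the claimed $\left[n-w,k-1,d-\tfrac{w}{2}+1\right]_2$ code. Two small points need checking. First, the residual is meant with respect to a nonzero codeword, so we may assume $w>0$. Second, the dimension is exactly $k-1$, which holds because $w<2d$. There is no real obstacle: the only substantive ingredient is the evenness of residual codes of SO codes, which is already established in Proposition~\ref{prop_SO_characterization}.
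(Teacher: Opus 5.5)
Your proposal is correct and follows the same route as the paper. The paper derives the lemma directly from Lemma~\ref{lemma_residual_code_bound}, with $\lceil w/2\rceil=w/2$ since $C$ is even, together with property (C) of Proposition~\ref{prop_SO_characterization} that all residual codes of an SO code are even; this raises the odd bound $d-\tfrac{w}{2}$ to $d-\tfrac{w}{2}+1$.
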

\noindent
For $w=d$ Lemma~\ref{lemma_residual_SO_code_bound} was shown in \cite[Lemma 3.1]{li2008classification} and \cite[Lemma 3.7]{shi2025binary}. Since the residual of a Griesmer $[n,k,d]_2$ code w.r.t.\ a minimum weight codeword
is a Griesmer $[n-d,k-1,\tfrac{d}{2}]_2$ code, Lemma~\ref{lemma_residual_SO_code_bound} implies
\begin{equation}
  \label{ie_Griesmer_SO}
  \tilde{n}_2(k,d)\ge \left\{ \begin{array}{rcl} g_2(k,d')&:& d'\equiv 0\pmod 4,\\g_2(k,d')+1&:& d'\equiv 2\pmod 4\end{array}\right.
  =:\tilde{g}_2(k,d),
\end{equation}
where $d':=2\left\lceil \tfrac{d}{2}\right\rceil$. We call Inequality~(\ref{ie_Griesmer_SO}) the \emph{Griesmer SO bound} and remark that it is attained with equality for all sufficiently large $d$,\footnote{As mentioned before, the same property  holds for linear codes over $\F_q$ and the ordinary Griesmer bound. Other instances of Griesmer-type bounds with this strong asymptotic equivalence are given by additive codes \cite{Kurz2024Additive} and linear codes w.r.t.\ the $b$-symbol metric \cite{kurz2025linear}.} see
\cite{shi2025binary} or Subsection~\ref{subsec_geometric_notation}. Thus, the determination of $\tilde{n}_2(k,\cdot)$ becomes a finite problem for each dimension $k$.
The exclusion of codeword weights $w$ via $n-w<n_2\!\left(k-1,d-\tfrac{w}{2}+1\right)$ and Lemma~\ref{lemma_residual_SO_code_bound} is called \emph{residual SO code argument}. Since the residual codes are even, we can deduce further exclusion criteria for large weights.
\begin{lemma}
  \label{lemma_residual_SO_large_weight}
  Let $C$ be an $[n,k,d]_2$ SO code with $k\ge 2$. If $C$ is full-length, then there cannot be a codeword of weight $n-1$. If $C$ is projective, then there cannot be a codeword of weight $n-2$.
\end{lemma}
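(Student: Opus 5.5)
The plan is to take the residual code with respect to the offending codeword and use property (C) of Proposition~\ref{prop_SO_characterization}: every residual code of an SO code is even. The only extra work is to check that the residual code is non-trivial enough for evenness to fail.

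\emph{Case of weight $n-1$.} Suppose $C$ is full-length and contains a codeword $c$ with $\wt(c)=n-1$. Then $\Res(C;c)=C_{\{i\}}$, where $i$ is the unique coordinate outside $\supp(c)$. Since $C$ has full length, some codeword $c'$ satisfies $c'_i=1$. The corresponding codeword of $\Res(C;c)$ has weight $1$, which contradicts evenness. This case needs no hypothesis beyond $k\ge 2$, which is required for property (C).

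\emph{Case of weight $n-2$.} Suppose $C$ is projective and contains a codeword $c$ with $\wt(c)=n-2$, and let $\{i,j\}=[n]\backslash\supp(c)$. Then $\Res(C;c)=C_{\{i,j\}}$ is a binary code of length $2$, and by property (C) it is even. Hence it is contained in $\{00,11\}$.

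\begin{itemize}
\item If it is $\{00\}$, then coordinate $i$ is identically zero on $C$. The unit vector $e_i$ then lies in $C^\perp$, so $d^\perp=1$.
\item Otherwise every codeword satisfies $c'_i=c'_j$. Then $e_i+e_j\in C^\perp$ has weight $2$, so $d^\perp\le 2$.
\end{itemize}

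Either way this contradicts projectivity, $d^\perp\ge 3$.

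The only delicate point is treating the zero-column subcase, since projectivity in this paper is defined via $d^\perp\ge 3$, which covers it.
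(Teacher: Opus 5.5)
Your proof is correct and is essentially the paper's argument: the paper gives no separate proof of this lemma, justifying it only by the preceding remark that residual codes of SO codes are even (property (C) of Proposition~\ref{prop_SO_characterization}). You apply exactly that to the residual code of length $1$ or $2$, using full length or $d^\perp\ge 3$ to rule out a zero coordinate or a repeated coordinate; geometrically, a hyperplane $H$ with $\cM(H)\in\{1,2\}$ would carry either a single point or two distinct points spanning a line, and in both cases the residual code contains words of weight $1$.
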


\begin{proposition} (\cite{macwilliams1977theory}, MacWilliams Identities)
  \label{prop_MacWilliams_identitites}
   Let $C$ be an $[n,k,d]_2$ code and $C^\perp$ be the dual code of $C$. Let $A_i(C)$ and $B_i(C)$ be the number of codewords of weight $i$ in $C$ and
   $C^\perp$, respectively. With this, we have
   \begin{equation}
     \label{eq_MacWilliams}
     \sum_{j=0}^n K_i(j)A_j(C)=2^kB_i(C),\quad 0\le i\le n
   \end{equation}
   where
   $$
     K_i(j)=\sum_{s=0}^n (-1)^s {{n-j}\choose{i-s}}{{j}\choose{s}},\quad 0\le i\le n
   $$
   are the binary Krawtchouk polynomials. We will simplify the notation to $A_i$ and $B_i$ whenever $C$ is clear from the context.
 \end{proposition}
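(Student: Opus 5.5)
The identities follow from the standard character-sum (discrete Poisson summation) argument. The first step is the orthogonality relation
\[
\sum_{c\in C}(-1)^{\langle c,y\rangle}=\begin{cases}2^k & \text{if } y\in C^\perp,\\ 0 & \text{otherwise,}\end{cases}
\qquad y\in\F_2^n .
\]
To see it, note that $c\mapsto\langle c,y\rangle$ is a linear functional $C\to\F_2$. If $y\in C^\perp$ it vanishes identically, so every summand equals $1$ and the sum is $|C|=2^k$. If $y\notin C^\perp$ the functional is surjective, so its kernel has index $2$ in $C$. Exactly half of the codewords then contribute $+1$ and half contribute $-1$, and the sum is $0$.

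Next, fix $0\le i\le n$ and sum the relation over all $y\in\F_2^n$ with $\wt(y)=i$. The left-hand side becomes $2^k B_i(C)$, since only the $y\in C^\perp$ of weight $i$ survive. Exchanging the order of summation gives
\[
2^kB_i(C)=\sum_{c\in C}\;\sum_{y\in\F_2^n,\ \wt(y)=i}(-1)^{\langle c,y\rangle}.
\]
It remains to show that the inner sum depends only on $j=\wt(c)$ and equals $K_i(j)$. Then grouping the codewords by weight yields $\sum_j K_i(j)A_j(C)$, which is Equation~(\ref{eq_MacWilliams}).

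For the inner sum, fix $c$ with $\wt(c)=j$ and classify each $y$ of weight $i$ by $s:=|\supp(y)\cap\supp(c)|=\wt(c\star y)$. Over $\F_2$ we have $\langle c,y\rangle\equiv s\pmod 2$, so $y$ contributes $(-1)^s$. The number of such $y$ is $\binom{j}{s}\binom{n-j}{i-s}$: choose $s$ of the $j$ positions in $\supp(c)$ and $i-s$ of the $n-j$ positions outside it. Summing over $s$ gives exactly
\[
K_i(j)=\sum_{s}(-1)^s\binom{n-j}{i-s}\binom{j}{s},
\]
where binomial coefficients with out-of-range arguments vanish, so summing $s$ from $0$ to $n$ is harmless.

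There is no genuine obstacle here. The only point needing care is the vanishing of the character sum for $y\notin C^\perp$, which is the kernel-of-index-two argument above. Everything else is bookkeeping of the double sum.
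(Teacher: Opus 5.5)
Your proof is correct and complete: the orthogonality relation, the exchange of summation over the weight-$i$ shell, and the count $\binom{j}{s}\binom{n-j}{i-s}$ with sign $(-1)^s$ are exactly what is needed. The paper gives no proof of its own and only cites \cite{macwilliams1977theory}, whose argument is the same Hadamard-transform (Poisson summation) identity applied to $x^{n-\wt(u)}y^{\wt(u)}$, yielding $W_{C^\perp}(x,y)=2^{-k}W_C(x+y,x-y)$ and then the Krawtchouk form by comparing coefficients; you simply apply the identity directly to the indicator of the weight-$i$ vectors.
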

 We call the sequence $\left(i^{A_i}\right)_{i>0}$, where we leave out entries with a zero exponent, the \emph{weight distribution} of $C$.
 Whenever we speak of the first $l$ MacWilliams identities, we mean Equation~(\ref{eq_MacWilliams}) for $0\le i\le l-1$.
 Adding the non-negativity constraints $A_i,B_i\ge 0$ and $A_i=0$ for forbidden weights $i$, we obtain a linear program where we can maximize or minimize certain quantities, which is called the \emph{linear programming method for linear codes}.
 If there is no feasible solution, then there is no linear code with the specified parameters. Adding additional equations or inequalities strengthens the formulation. E.g.\ we can add integer rounding cuts.
 For an $[n,k,d]_2$ code of full length, i.e.\ $B_1=0$, we can rewrite the first four MacWilliams identities to
 \begin{eqnarray}
   \sum_{i>0} A_i   &=& 2^k-1,\label{eq_mw1}\\
   \sum_{i} iA_i    &=& 2^{k-1}n, \label{eq_mw2}\\
   \sum_{i} i^2 A_i &=& 2^{k-1}\cdot\left(B_2+n(n+1)/2\right),\label{eq_mw3}\\
   \sum_{i} i^3 A_i &=& 2^{k-2}\cdot\left(3(B_2n-B_3)+n^2(n+3)/2\right).\label{eq_mw4}
 \end{eqnarray}
 \begin{proposition}(\cite[Proposition 5]{dodunekov1999some}, cf.~\cite{simonis1994restrictions})
  \label{prop_div_one_more}
  Let $C$ be an $[n,k,d]$-code with all weights divisible by $\Delta:=2^a$ and let $\left(A_i\right)_{i=0,1,\dots,n}$ be the weight distribution of $C$. Put
  \begin{eqnarray*}
    \alpha&:=&\min\{k-a-1,a+1\},\\
    \beta&:=&\lfloor(k-a+1)/2\rfloor,\text{ and}\\
    \delta&:=&\min\{2\Delta i\,\mid\,A_{2\Delta i }\neq 0\wedge i>0\}.
  \end{eqnarray*}
  Then the integer
  $$
    T:=\sum_{i=0}^{\lfloor n/(2\Delta)\rfloor} A_{2\Delta i}
  $$
  satisfies the following conditions.
  \begin{enumerate}[noitemsep]
    \item \label{div_one_more_case1}
          $T$ is divisible by $2^{\lfloor(k-1)/(a+1)\rfloor}$.
    \item \label{div_one_more_case2}
          If $T<2^{k-a}$, then
          $
            T=2^{k-a}-2^{k-a-t}
          $
          for some integer $t$ satisfying $1\le t\le \max\{\alpha,\beta\}$. Moreover, if $t>\beta$, then $C$ has an $[n,k-a-2,\delta]$-subcode and if $t\le \beta$, it has an
          $[n,k-a-t,\delta]$-subcode.
    \item \label{div_one_more_case3}
          If $T>2^k-2^{k-a}$, then
          $
            T=2^k-2^{k-a}+2^{k-a-t}
          $
          for some integer $t$ satisfying $0\le t\le \max\{\alpha,\beta\}$. Moreover, if $a=1$, then $C$ has an $[n,k-t,\delta]$-subcode. If $a>1$, then $C$ has an
          $[n,k-1,\delta]$-subcode unless $t=a+1\le k-a-1$, in which case it has an $[n,k-2,\delta]$-subcode.
  \end{enumerate}
\end{proposition}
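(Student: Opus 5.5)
The plan is to turn the counting of $T$ into a statement about a Boolean function of low degree on $\F_2^k$, and then to use the known weight structure of Reed--Muller codes. Fix a generator matrix with rows $g_1,\dots,g_k$ and identify $x\in\F_2^k$ with the codeword $c_x=\sum_i x_ig_i$. By iterating Equation~(\ref{eq_weight_sum}) (inclusion--exclusion) we get
$$\wt(c_x)=\sum_{\emptyset\neq S\subseteq \supp(x)}(-2)^{|S|-1}\,\wt\Bigl(\mathop{\star}_{i\in S} g_i\Bigr).$$
Write $\Delta=2^a$. The key lemma is that, modulo $2^{a+1}$, only the sets $S$ with $|S|\le a+1$ contribute, because the other terms carry a factor $2^{|S|-1}\ge 2^{a+1}$. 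This should show that $f(x):=\wt(c_x)/\Delta \bmod 2$ is a Boolean function of degree at most $a+1$ with $f(0)=0$.

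I expect the subtle point here to be the division by $\Delta$. We know the sum is divisible by $2^a$, but the individual terms need not be, so the plan is:
\begin{itemize}
\item use the $2^a$-divisibility of $C$ (applied to subcodes) to show that the contributions of the smaller sets $S$ are suitably divisible;
\item alternatively, derive the degree bound directly from Ward's theory of divisible codes, which is exactly the statement that $\wt(c_x)/2^a \bmod 2$ lies in $\mathcal{RM}(a+1,k)$.
\end{itemize}
Once this is established, $T=|\{x: f(x)=0\}|=2^k-\wt(f)$, where $f$ is a codeword of the Reed--Muller code $\mathcal{RM}(a+1,k)$.

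Part~\ref{div_one_more_case1} then follows from the McEliece (Ax) divisibility theorem. All weights of $\mathcal{RM}(r,k)$ are divisible by $2^{\lceil k/r\rceil-1}$, and with $r=a+1$ this exponent equals $\lfloor (k-1)/(a+1)\rfloor$. The same divisibility holds for $2^k-\wt(f)$, since $2^k$ is divisible by this power.

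For parts~\ref{div_one_more_case2} and~\ref{div_one_more_case3} I would use the Kasami--Tokura theorem, which describes all codewords of $\mathcal{RM}(r,k)$ of weight less than twice the minimum distance $2^{k-r}$.
\begin{itemize}
\item If $T<2^{k-a}$, then $1+f$ is a codeword of weight $T$. Its weight lies below $2\cdot 2^{k-a-1}$, so it is either exactly $2^{k-a-1}$ (the case $t=1$) or of the form $2^{k-a}-2^{k-a-t}$.
\item If $T>2^k-2^{k-a}$, then $f$ itself is such a low-weight codeword, which gives $T=2^k-2^{k-a}+2^{k-a-t}$.
\end{itemize}
Kasami--Tokura also gives the normal forms: up to an affine change of variables, $f$ (or $1+f$) is either a product $\ell_1\cdots\ell_{r-\mu}(\ell_{r-\mu+1}\cdots\ell_r+\ell_{r+1}\cdots\ell_{r+\mu})$ of affine forms, or a product of affine forms times a quadratic form of rank $2\beta'$. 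Comparing the allowed $\mu$ and rank with the constraint $r=a+1$ and the $k$ available variables should give the admissible range $t\le\max\{\alpha,\beta\}$. Here $\alpha=\min\{k-a-1,a+1\}$ corresponds to the first family and $\beta=\lfloor(k-a+1)/2\rfloor$ to the quadratic family.

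For the subcodes, I would read off a linear subspace $U\subseteq\F_2^k$ contained in the zero set of $f$ from each normal form. Because $f(0)=0$, the relevant affine flats through the origin are genuine subspaces. Their dimensions are:
\begin{itemize}
\item $k-a-t$ when $t\le\beta$ (the quadratic family);
\item $k-a-2$ otherwise, in part~\ref{div_one_more_case2};
\item $k-t$, respectively $k-1$ or $k-2$, in part~\ref{div_one_more_case3}, using that there $f$ is a product containing at least one linear factor.
\end{itemize}
On $U$ all weights are divisible by $2\Delta$, so $\{c_x: x\in U\}$ is a subcode whose minimum weight is at least $\delta$.

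I expect this last bookkeeping step to be the main obstacle: matching the Kasami--Tokura normal forms to the exact parameters, including the exceptional case $t=a+1\le k-a-1$ and the requirement that the subspace pass through $0$. I would first do it carefully for $a=1$, where $f$ is quadratic and Dickson's theorem gives everything explicitly. I would then treat $a>1$ by the product-of-affine-forms normal form.
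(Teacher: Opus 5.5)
The paper gives no proof of this proposition; it only quotes \cite[Proposition 5]{dodunekov1999some}. Your route is the one behind that result: the Reed--Muller argument of \cite{simonis1994restrictions}, followed by McEliece and Kasami--Tokura. The parts you flagged as subtle are fine. The map $F(x)=\wt(c_x)/2^a$ is integer-valued on $\{0,1\}^k$, so by M\"obius inversion the coefficients of its unique multilinear expansion are integers. These coefficients are $\pm 2^{|S|-1-a}w_S$, where $w_S$ is the weight of the Schur product of the rows $g_i$, $i\in S$. Hence they are even for $|S|\ge a+2$, which gives $f\in\mathcal{RM}(a+1,k)$. Part~1 and the ranges of $t$ are then correct: the first Kasami--Tokura family has $3\le\mu\le\min\{a+1,k-a-1\}=\alpha$, and the quadratic family has $2\mu\le k-a+1$, i.e.\ $\mu\le\beta$.

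For the subcodes, what matters is whether a maximal flat passes through the origin, and $f(0)=0$ alone does not give this. In Part~2 and in Part~3 with $a=1$ you can supply it. In normal coordinates the image $p$ of the origin satisfies $Q(p)=1$ (resp.\ $Q(p)=0$) for a hyperbolic form $Q$ of rank $2\mu$. Then $p$ lies on $p+S\subseteq Q^{-1}(1)$, with $S$ a totally singular $(\mu-1)$-space of $p^\perp$ (resp.\ $p$ lies in a totally singular $\mu$-space). In the first family, set the $a+1$ factors of the product that equals $1$ at the origin to $1$, and set one factor of the other product that vanishes there to $0$.

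\textbf{Where it fails: Part~3 with $a>1$.} A linear factor $\ell$ of $f$ only puts the affine hyperplane $\{\ell=0\}$ into the zero set of $f$. That hyperplane is a subspace only if $\ell(0)=0$, and $f(0)=0$ may instead come from the remaining factor.
\begin{itemize}
\item \emph{Non-exceptional example.} Let $D$ be the even $[16,4]_2$ code with columns $e_1,e_2$ (three times each), $e_1+e_2$ (five times), $e_3,e_4$ (once each) and $e_3+e_4$ (three times). Then $\wt(c_u)/2\equiv u_1u_2+u_3u_4\pmod 2$. Let $C$ be generated by $(G_D\,|\,G_D)$ and $(0\,|\,\mathbf{1})$.
\item This $C$ is a doubly-even $[32,5,4]_2$ code with $f=(1+x_5)(x_1x_2+x_3x_4)$ and $T=16+10=26=2^5-2^3+2^1$. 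So $a=2$ and $t=2\neq a+1$.
\item However, $(1,1,0,0,0)$, $(1,0,1,1,0)$ and $(0,1,1,1,0)$ lie in $\supp(f)$ and sum to $0$. Hence every $4$-dimensional subcode contains one of these codewords, of weights $12,20,20$. So no $4$-dimensional subcode has all weights divisible by $8$.
\item Adding eight copies of the seven points of $\langle(1,1,0,0,0),(1,0,1,1,0)\rangle^\perp$ raises all other weights by $32$ without changing $f$. This gives a doubly-even $[88,5,12]_2$ code with $\delta=40$ and no $[88,4,40]_2$ subcode at all.
\item \emph{Exceptional case.} $\mathcal{S}_2(3)\oplus\mathcal{S}_2(3)$ is a doubly-even $[14,6,4]_2$ code with $T=1+49=2^6-2^4+2^1$, so $t=3=a+1\le k-a-1$ and $\delta=8$. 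The promised $[14,4,8]_2$ subcode would violate $g_2(4,8)=15$.
\end{itemize}
So the $a>1$ subcode claims of Part~3 cannot be proved as stated. What your normal-form analysis does give is an $[n,k-t,\delta]$-subcode for every $a$, which is the $a=1$ bound. It also gives an $[n,k-1,\delta]$-subcode when some linear factor of $f$ vanishes at the origin.
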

A special and well-known subcase is that the number of even weight codewords in an $[n,k]_2$ code is either $2^{k-1}$ or $2^k$.

 The weight enumerator can be generalized to the split weight enumerator based on a partition of the coordinates \cite{simonis1995macwilliams}. For a linear
 programming method based on the split weight enumerator we refer e.g.\ to \cite{jaffe1997brief}. Here we will apply the split weight enumerator for a partition given by the support of some codeword and its complement.

\subsection{Geometric notation}
\label{subsec_geometric_notation}

The set of all subspaces of $\F_q^r$, ordered by the incidence relation
$\subseteq$, is called \emph{$(r-1)$-dimensional projective geometry over
$\F_q$} and denoted by $\PG(r-1,q)$. Employing this algebraic notion
of dimension instead of the geometric one, we will use the term $i$-space
to denote an $i$-dimensional subspace of $\F_q^r$. To highlight the
important geometric interpretation of subspaces we will call $1$-, $2$-,
and $(r-1)$-spaces points, lines, and hyperplanes, respectively. For two
subspaces $S$ and $S'$ we write $S\le S'$ if $S$ is contained in $S'$. Moreover,
we say that $S$ and $S'$ are \emph{incident} iff $S\le S'$ or $S\ge S'$.
Let $[i]_q:=\tfrac{q^i-1}{q-1}$ denote the number of points
of an arbitrary $i$-space in $\PG(r-1,q)$ where $r\ge i$.
A \emph{premultiset of points} $\cM$ of $\PG(r-1,q)$ is a mapping from the
set of points of $\PG(r-1,q)$ to $\Z$. The value $\cM(P)$ is called the
\emph{multiplicity} of $\cM$. We extend this notation additively to each subspace
$S$ via $\cM(S)=\sum_{P\le S} \cM(P)$. So, the cardinality of $\cM$ equals
$\cM(V)$, where $V$ denotes the ambient space. For a hyperplane $H$
the number of points of $\cM$ in $H$ is given by $\cM(H)$. If $\cM(P)\in \N$
for each point $P$, we say that $\cM$ is a \emph{multiset of points}. For each
subspace $S$ we denote its \emph{characteristic function} by $\chi_S$,
i.e.\ $\chi_S(P)=1$ if $P\le S$ and $\chi_S(P)=0$ otherwise.

Starting from a generator matrix $G\in\F_q^{k\times n}$ of an $[n,k,d]_q$ code $C$ we can associate a multiset of points $\cM$ in $\PG(k-1,q)$ by considering the spans of the columns of $G$. The cardinality of $\cM$ is $\#\cM=n$ iff $C$ is of full length. For the other direction we can build up a generator matrix $G$ of $C$ by taking $\cM(P)$ copies of a representing vector for every point $P$. We call $\cM$ in $\PG(r-1,q)$ \emph{spanning}
if the points with positive multiplicity, called the \emph{support}, span the entire ambient space. The dimension of the associated linear code equals $r$ iff $\cM$ is spanning.
Let $C$ be a full length $[n,k]_q$ code and $\cM$ in $\PG(k-1,q)$ its associated spanning multiset of points. We remark that the ambiguities in these constructions result in equivalent linear codes or multisets of points, see e.g.\ \cite[{\S}1.1.2]{tsfasman1991agcodes} or \cite{dodunekov1998codes} for more details. As
an example we mention that $\mathcal{S}_q(k)$ and $\mathcal{RM}_q(1,k-1)$ correspond to $\chi_V$ and $\chi_V-\chi_H$, where $V$ denotes the ambient space $\PG(k-1,q)$ and $H$ is a hyperplane.
Each non-zero codeword $c$ corresponds to a hyperplane $H$ and we have $\wt(c)=n-\cM(H)$. For the other direction, each hyperplane corresponds to the $q-1$ multiples of a non-zero codeword. A multiset of points $\cM$ is called \emph{$m$-arc} if $\cM(H)\le m$ for every hyperplane $H$. I.e.\ a full-length $[n,k,d]_q$ code corresponds to a spanning $(n-d)$-arc. Similarly, a multiset of points $\cM$ is called \emph{$m$-minihyper} (or \emph{$m$-fold blocking set w.r.t.~hyperplanes}) if $\cM(H)\ge m$ for every hyperplane $H$. If $\cM$ is a spanning $m$-arc and $\cM'$ is an $m'$-minihyper in $\PG(k-1,q)$, then $\cM-\cM'$ is a spanning $(m-m')$-arc iff it is a spanning multiset of points. In coding theoretic terms the deletion of $\cM'$ is called \emph{puncturing}.
\todo{Study the known examples from the literature where puncturing was used in order to find interesting candidates for $\cM'$
  and possible geometric constructions.}
For a multiset of points $\cM$ we denote the maximum point multiplicity by  $\gamma(\cM)$, so that e.g.\ $\gamma(\cM)=1$ iff the corresponding linear code $C$ is projective. If $K$ is some subspace not equal to the ambient space $V$, then the \emph{projection $\cM_K$ of $\cM$ through $K$} is the multiset of points in the factor geometry $V/K$ with multiplicity $\cM_K(S/K)=\cM(S)-\cM(K)$ for every subspace $K\subseteq S\subseteq V$. The dimension of the ambient space drops by $i$ and the cardinality drops by $\cM(K)$, while the $m$-arc property is preserved. In coding theory terms $\cM_K$ corresponds to a codimension $i$ subcode of $C$ when $\cM$ corresponds to $C$ and projection is called \emph{truncation} or \emph{shortening}. If $\cM,\cM'$ are spanning $m$- and spanning $m'$-arcs in $\PG(k-1,q)$, respectively, then $\cM+\cM'$ is a spanning $(m+m')$-arc in $\PG(k-1,q)$. If $\cM'$ spans a $(k-1)$-space we can choose a hyperplane $H$ for the embedding, so that
$\cM+\cM'$ is a spanning $(\le \max\{m+m',\cM(H)+\#\cM'\})$-arc in $\PG(k-1,q)$. In terms of corresponding generator matrices $G,G'$ this corresponds to the juxtaposition $\left(G|\tilde{G}'\right)$ where $\tilde{G}'$ is obtained from  $G'$ by suitably padding one row. An example
is given by the unique $[38,6,18]_2$ Griesmer code that can be obtained in this way from a $[32,6,16]_2$ and a $[6,5,2]_2$ code.

We call a multiset of points $\cM$ $\Delta$-divisible iff its corresponding linear code is, i.e.\ if we have $\cM(H)\equiv \#\cM\pmod \Delta$ for every hyperplane $H$. Similarly, we call a  multiset of points in $\PG(k,2)$ self-orthogonal (SO) if its corresponding linear code is. Clearly, projection preserves divisibility and self-orthogonality.
\begin{lemma}
  \label{lemma_geometric_SO}
  Let $\cM$ be a spanning multiset of points in $\PG(k-1,2)$, where $k\ge 2$. Then, $\cM$ is SO iff $\cM(H)\equiv\cM(K)\equiv\#\cM\pmod 2$ for every hyperplane $H$ and every subspace $K$ of codimension $2$.
\end{lemma}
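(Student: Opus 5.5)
We translate the condition into coding terms and then apply Proposition~\ref{prop_SO_characterization}. Let $C$ be the full-length $[n,k]_2$ code associated with $\cM$, where $n=\#\cM$. A non-zero codeword $c$ corresponds to a hyperplane $H$ with $\wt(c)=n-\cM(H)$. So the condition $\cM(H)\equiv\#\cM\pmod 2$ for all hyperplanes $H$ says exactly that $C$ is even.

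For codimension~$2$ subspaces, take distinct non-zero codewords $c,c'$ with hyperplanes $H,H'$. The codeword $c+c'$ corresponds to the third hyperplane $H''$ through $K:=H\cap H'$. A coordinate (point $P$) lies outside $\supp(c)\cup\supp(c')$ iff $P\le H\cap H'=K$. By inclusion--exclusion,
\[
\wt(c\star c')=|\supp(c)\cap\supp(c')|=\wt(c)+\wt(c')-(n-\cM(K)).
\]
Equivalently, each point outside $K$ lies in exactly one of $H,H',H''$. This gives $\cM(H)+\cM(H')+\cM(H'')=n+2\cM(K)$, and hence
\[
\wt(c\star c')=n-\cM(H)-\cM(H')+\cM(K).
\]
Every codimension-$2$ subspace $K$ arises as such an intersection, since $k\ge 2$.

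We next use that $C$ is SO iff $C$ is even and $\langle c,c'\rangle\equiv\wt(c\star c')\equiv 0\pmod 2$ for all pairs of distinct non-zero codewords, as in the proof of Proposition~\ref{prop_SO_characterization}. For ``$\Leftarrow$'': if $\cM(H)\equiv\cM(H')\equiv\cM(K)\equiv n\pmod 2$, the formula gives $\wt(c\star c')\equiv n-n-n+n\equiv 0$, and evenness is already established, so $C$ is SO. For ``$\Rightarrow$'': SO implies even, i.e.\ $\cM(H)\equiv n$ for all $H$. Then $0\equiv\wt(c\star c')\equiv n-n-n+\cM(K)$, which yields $\cM(K)\equiv n\pmod 2$ for every codimension-$2$ subspace $K$.

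The only step requiring care is the counting identity relating $\wt(c\star c')$ to $\cM(H),\cM(H'),\cM(K)$. It reduces to observing that, for a point $P$, the coordinate lies in $\supp(c)\cap\supp(c')$ iff $P\not\le H$ and $P\not\le H'$, together with the partition of $V\setminus K$ by the three hyperplanes through $K$. Everything else is bookkeeping modulo~$2$.
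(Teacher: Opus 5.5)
Your proof is correct and follows essentially the same route as the paper: both identify evenness of $C$ with $\cM(H)\equiv\#\cM\pmod 2$, and both use inclusion--exclusion to get $|\supp(c)\cap\supp(c')|=\#\cM-\cM(H)-\cM(H')+\cM(K)$ with $K=H\cap H'$. Your explicit remark that every codimension-$2$ subspace is the intersection of two distinct hyperplanes, which the ``$\Rightarrow$'' direction needs, is a small point the paper leaves implicit; the third-hyperplane identity is a harmless redundant check.
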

\begin{proof}
  By $C$ denote the corresponding $[n,k]_2$ code. Let $c$ denote an arbitrary non-zero codeword and $H$ the corresponding hyperplane. The
  statements $\wt(c)\equiv 0\pmod 2$, $\cM(H)\equiv \#\cM\pmod{2}$, and
  $\langle c,c\rangle=0$ are pairwise equivalent. Now let $c,c
  $ be two different non-zero codewords and $H,H'$ be the corresponding hyperplanes. We have $\langle c,c'\rangle=0$ iff $|\supp(c)\cap \supp(c')|\equiv 0\pmod 2$. Setting $K:=H\cap H'$ we observe
  \begin{eqnarray*}
    |\supp(c)\cap \supp(c')|&=&\#\cM-\cM(H\cup H')
    =\#\cM-\cM(H)-\cM(H')+\cM(K)\\
    &=&(\#\cM\!-\!\cM(H))+(\#\cM\!-\!\cM(H'))-(\#\cM\!-\!\cM(K))\equiv
    \#\cM\!-\!\cM(K)\pmod 2.
  \end{eqnarray*}
\end{proof}

\begin{corollary}
  \label{cor_lin_comb_SO}
  If $\cM_1$ is an arbitrary multiset of points and $\cM_2,\cM_3$ are SO multisets of points in $\PG(k-1,2)$, then $2\lambda\cdot\cM_1$, $\cM_2+\cM_3$, and $\lambda\cdot \cM_2$ are SO multisets of points for all $\lambda\in\N_{\ge 1}$. If $\cM_2-\cM_3$ is a multiset of points, then it is also SO.
\end{corollary}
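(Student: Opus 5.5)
The natural tool is Lemma~\ref{lemma_geometric_SO}. It reduces self-orthogonality of a spanning multiset to parity conditions on hyperplanes and codimension-$2$ subspaces. The plan is to verify these parity conditions for each of the listed multisets, using that $\cM\mapsto\cM(S)$ is additive for every subspace $S$. There is one technical point to keep in mind: Lemma~\ref{lemma_geometric_SO} is stated for spanning multisets. Either all multisets in question are spanning, or we treat non-spanning ones via the code: a non-spanning multiset gives a code of smaller effective dimension, and Lemma~\ref{lemma_geometric_SO} applies inside the span. More simply, the parity characterization in the proof of Lemma~\ref{lemma_geometric_SO} never used spanning. The identities $\wt(c)=\#\cM-\cM(H)$ and $|\supp(c)\cap\supp(c')|\equiv \#\cM-\cM(K)\pmod 2$ hold for any multiset and any pair of hyperplanes, including the case where the associated codewords coincide or vanish. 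So I would first note that, for arbitrary multisets, SO is equivalent to
\[
\cM(H)\equiv\cM(K)\equiv\#\cM\pmod 2
\]
for all hyperplanes $H$ and codimension-$2$ subspaces $K$.

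With this, each claim is a short parity computation. For $2\lambda\cM_1$, every value $2\lambda\cM_1(S)$ is even, including $\#(2\lambda\cM_1)$, so all three quantities are $\equiv 0$. For $\cM_2+\cM_3$, additivity gives $(\cM_2+\cM_3)(H)=\cM_2(H)+\cM_3(H)\equiv\#\cM_2+\#\cM_3=\#(\cM_2+\cM_3)$, and the same holds for $K$. For $\lambda\cM_2$, multiplying the congruences $\cM_2(H)\equiv\cM_2(K)\equiv\#\cM_2$ by $\lambda$ preserves them. For $\cM_2-\cM_3$, provided it is a multiset (nonnegative multiplicities), the same additivity argument with subtraction works, since congruences mod $2$ are preserved under differences.

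I expect no real obstacle. The only point needing care is the spanning hypothesis, since a sum or difference may fail to be spanning in a way the original objects did not. I would handle this by the remark above that the parity argument in the proof of Lemma~\ref{lemma_geometric_SO} is independent of spanning. Alternatively, one can argue directly on generator matrices. Juxtaposition corresponds to $\cM_2+\cM_3$, and row inner products add, so by Proposition~\ref{prop_SO_characterization}(D) the result is SO. Repetition of columns corresponds to scalar multiples, and $2\lambda$ copies make every inner product even. For the difference, removing the columns of an SO juxtaposand from an SO matrix changes every inner product by an even amount.
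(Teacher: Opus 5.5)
Your proof is correct and follows the paper's intended route: the corollary is stated as an immediate consequence of Lemma~\ref{lemma_geometric_SO}, obtained from the parity congruences for hyperplanes and codimension-$2$ subspaces together with the additivity of $\cM\mapsto\cM(S)$. Your extra observation is also correct: the parity computation in the proof of that lemma never uses the spanning hypothesis, so it covers possibly non-spanning multisets such as $2\lambda\cM_1$ or $\cM_2-\cM_3$, a detail the paper leaves implicit.
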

So, $\mathcal{S}_2(k)$ and $\mathcal{RM}_2(1,k)$ are SO iff $k\ge 3$.
\begin{corollary}
  \label{cor_sum_SO_shift}
  Let $\cM=\cM_1+\cM_2$ be the sum of two multisets of points in $\PG(k-1,2)$, where $\cM_1$ is SO. Then, $\cM$ is SO iff $\cM_2$ is SO.
\end{corollary}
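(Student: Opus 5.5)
The statement is Corollary~\ref{cor_sum_SO_shift}: for $\cM=\cM_1+\cM_2$ with $\cM_1$ SO, $\cM$ is SO iff $\cM_2$ is SO. The plan is to derive it from the parity criterion of Lemma~\ref{lemma_geometric_SO} rather than from Corollary~\ref{cor_lin_comb_SO}. That lemma characterizes SO multisets by congruences for hyperplanes and codimension-$2$ subspaces. All quantities $\cM(S)$ are additive in $\cM$, so these congruences should transfer between $\cM$ and $\cM_2$ once the contribution of $\cM_1$ is known to satisfy them.

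Concretely, for every hyperplane $H$ and every codimension-$2$ subspace $K$ we have
\[
\cM(H)=\cM_1(H)+\cM_2(H),\quad \cM(K)=\cM_1(K)+\cM_2(K),\quad \#\cM=\#\cM_1+\#\cM_2 .
\]
Since $\cM_1$ is SO, we have $\cM_1(H)\equiv\cM_1(K)\equiv\#\cM_1\pmod 2$. Subtracting these congruences from the identities above gives
\[
\cM(H)-\#\cM\equiv\cM_2(H)-\#\cM_2 \quad\text{and}\quad \cM(K)-\#\cM\equiv\cM_2(K)-\#\cM_2 \pmod 2 .
\]
Hence the parity conditions of Lemma~\ref{lemma_geometric_SO} hold for $\cM$ exactly when they hold for $\cM_2$.

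The only obstacle is the spanning hypothesis in Lemma~\ref{lemma_geometric_SO}; $\cM_1$, $\cM_2$, or their sum need not be spanning. The fix is that SO-ness is a property of the code, or equivalently of a generator matrix, which the proof of Lemma~\ref{lemma_geometric_SO} never uses spanning for. Restricting to a spanning subspace only adds zero columns or drops dependent rows, and neither affects self-orthogonality. More directly, one can read the parity conditions as the statement that the Gram-type pairing $\langle c,c'\rangle$ vanishes for all codewords given by linear forms on $\F_2^k$. This formulation is valid for an arbitrary multiset whose associated matrix has $k$ rows, spanning or not.

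So the plan is to first note, in one line, that Lemma~\ref{lemma_geometric_SO} holds without the spanning assumption when SO is interpreted via the generator matrix with $k$ rows. With that in place, the mod-$2$ additivity computation above proves both directions at once, and the trivial cases $k=1$ and zero multisets are handled by the same parity argument (evenness only).
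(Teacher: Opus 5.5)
Your proposal is correct and is essentially the paper's argument: the paper obtains this corollary from Corollary~\ref{cor_lin_comb_SO} (sums of SO multisets, and differences of SO multisets that are again multisets, are SO). That corollary is itself just the mod-$2$ additivity of the conditions in Lemma~\ref{lemma_geometric_SO}, which you apply directly, and your observation that the spanning hypothesis of Lemma~\ref{lemma_geometric_SO} can be dropped when SO is read via the $k$-row generator matrix is a valid point the paper leaves implicit.
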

In coding theory terms, Corollary~\ref{cor_sum_SO_shift} was shown in \cite[Lemma 3.1]{shi2023two} for the special case where $\cM_1$ in $\PG(k-1,2)$, where $k\ge 3$, has constant point multiplicity. In \cite[Lemma 3.1]{shi2025binary} the extra assumption on $\cM_1$ was removed. Corollary~\ref{cor_lin_comb_SO} directly implies the following construction, cf.~\cite{shi2025binary}.
\begin{lemma}
  \label{lemma_minus_double_point}
  Let $\cM$ be a spanning SO multiset of points in $\PG(k-1,2)$ such that $\cM(H)\le \#\cM -d$ for every hyperplane $H$ and $P$ some point with $\cM(P)\ge 3$. Then, $\cM-2\chi_P$ corresponds to an $[\#\cM-2,k,d-2]_2$ SO code.
\end{lemma}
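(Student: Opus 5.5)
We write $\cM'=\cM-2\chi_P$ and check three things in turn: that it is a spanning multiset of points, that it is SO, and the parameters of the corresponding code.

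\emph{Spanning multiset.} Since $\cM(P)\ge 3$, we have $\cM'(P)=\cM(P)-2\ge 1$, and all other multiplicities are unchanged. So $\cM'$ is a multiset of points with exactly the same support as $\cM$. As $\cM$ is spanning, so is $\cM'$. Hence $\cM'$ corresponds to a full-length code of dimension $k$ and length $\#\cM'=\#\cM-2$.

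\emph{Self-orthogonality.} This is where we invoke Corollary~\ref{cor_lin_comb_SO}. Taking $\cM_1=\chi_P$ and $\lambda=1$, the multiset $2\chi_P$ is SO. We then apply the subtraction clause of Corollary~\ref{cor_lin_comb_SO} with $\cM_2=\cM$ and $\cM_3=2\chi_P$: the difference $\cM-2\chi_P$ is a multiset of points, so it is SO. Alternatively, we can use Corollary~\ref{cor_sum_SO_shift} on $\cM=\cM'+2\chi_P$ with the SO summand $2\chi_P$.

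As a sanity check, Lemma~\ref{lemma_geometric_SO} can also be applied directly. Removing $2\chi_P$ changes $\#\cM$, $\cM(H)$ and $\cM(K)$ each by $0$ or $2$, so all the parity conditions are preserved.

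\emph{Minimum distance.} For every hyperplane $H$ we have
\[
\cM'(H)=\cM(H)-2\chi_P(H)\le \cM(H)\le \#\cM-d=\#\cM'-(d-2).
\]
So $\cM'$ is a spanning $(\#\cM'-(d-2))$-arc. By the correspondence between codewords and hyperplanes, every nonzero codeword has weight $\#\cM'-\cM'(H)\ge d-2$. This gives an $[\#\cM-2,k,d-2]_2$ code in the paper's convention that the stated $d$ is a lower bound on the minimum distance.

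There is no real obstacle here. The only point needing care is the hypothesis $\cM(P)\ge 3$ rather than $\cM(P)\ge 2$: it guarantees that $P$ stays in the support, so spanning, and hence dimension $k$, is preserved. Without it, subtracting $2\chi_P$ could leave a multiset that no longer spans the ambient space.
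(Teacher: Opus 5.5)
Your proof is correct and follows the paper's route. The paper gives no separate argument beyond noting that Corollary~\ref{cor_lin_comb_SO} directly implies the lemma, which is exactly your use of $2\chi_P$ being SO together with the difference clause; your checks that $\cM(P)\ge 3$ preserves the support (hence spanning and dimension $k$) and that $\cM'(H)\le \cM(H)\le \#\cM'-(d-2)$ fill in the routine details the paper leaves implicit.
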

With this, we can easily show that the Griesmer SO bound in Inequality~(\ref{ie_Griesmer_SO}) can always be attained with equality if $d$ is sufficiently large. For $d\equiv 0\pmod 4$ any $[g_2(k,d),k,d]_2$ Griesmer code is $4$-divisible by Theorem~\ref{thm_griesmer_divisible}, which implies SO. For $d\equiv 2\pmod 4$ we combine the same reasoning
to $d'=d+2\equiv 0\pmod 4$ with Lemma~\ref{lemma_minus_double_point}. Since sufficiently large $d$ implies sufficiently large $n$ and the existence of a point $P$ with multiplicity at least $3$, it just remains to show $g_2(k,d')=g_2(k,d)+3$, see e.g.~\cite{shi2025binary} or Lemma~\ref{lemma_griesmer_observations} for details.

It is a nice exercise to show that a multiset of points $\cM$ in $\PG(k-1,2)$, where $k\le 4$, is SO iff it can be recursively obtained from Corollary~\ref{cor_lin_comb_SO} using just the characteristic functions of $3$- and $4$-spaces. For $k>4$ the possibilities increase considerably, see e.g.\ \cite[Appendix B]{Bouyukliev2006}.

The first three MacWilliams identities can be shown by double-counting in the geometric setting, see e.g.\ \cite[Section 2.2]{kurz2021divisible}. Let $C$ be a linear code, $\cM$ its corresponding multiset of points, $c\in C$, and $H$ the hyperplane corresponding to $c$. The partition induced by the support of $c$ and its complement corresponds to the restriction of $\cM$ to $H$ and its complement $\overline{H}$. Over $\F_2$, a dual codeword of weight three geometrically corresponds to a line $L$. The fact that $L$ intersects $H$ in either three or one point directly translates to a condition for the dual variables in the split weight enumerator for $c$.
\todo{Which incidences need to be double-counted in the geometric setting in order to obtain the first few constraints of the split weight enumerator?}

\subsection{Parameters of Griesmer codes and the Solomon--Stiffler construction}
\label{subsec_Solomon_Stiffler}
Here we want to briefly describe the Solomon--Stiffler construction \cite{solomon1965algebraically} from 1965 that attains the Griesmer bound for all parameters if the minimum distance is sufficiently large.\footnote{While it is widely known, there still appear papers claiming new constructions of linear codes with desired properties that fall into this class, see e.g.~\cite{Xie2025Optimal}.}
Using $[i]_q=\tfrac{q^i-1}{q-1}$ for the number of points of an $i$-space in $\PG(k-1,q)$ we consider the following parameterization of the Griesmer bound.
\begin{lemma}
  \label{lemma_parameters_griesmer_code}
  Let $k$ and $d$ be positive integers. Write $d$ as
  \begin{equation}
    \label{eq_griesmer_representation_min_dist}
    d=\sigma q^{k-1}-\sum_{i=1}^{k-1}\varepsilon_iq^{i-1},
  \end{equation}
  where $\sigma\in\N_0$ and the $0\le\varepsilon_i<q$ are integers for all $1\le i\le k-1$. Then, Inequality~(\ref{eq_griesmer_bound})
  is satisfied with equality iff
  \begin{equation}
    \label{eq_griesmer_representation_length}
    n=\sigma[k]_q-\sum_{i=1}^{k-1}\varepsilon_i[i]_q,
  \end{equation}
  which is equivalent to
  \begin{equation}
    \label{eq_griesmer_representation_species}
    n-d=\sigma[k-1]_q-\sum_{i=1}^{k-1}\varepsilon_i[i-1]_q.
  \end{equation}
\end{lemma}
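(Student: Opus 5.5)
We first establish that the representation in Equation~(\ref{eq_griesmer_representation_min_dist}) exists and is unique. Set $\sigma:=\lceil d/q^{k-1}\rceil$. Then $0\le \sigma q^{k-1}-d<q^{k-1}$, so this difference has a unique base-$q$ expansion $\sum_{i=1}^{k-1}\varepsilon_iq^{i-1}$ with digits $0\le\varepsilon_i<q$. Conversely, any representation with such digits forces $\sigma q^{k-1}-d\in[0,q^{k-1}-1]$. Hence $\sigma$ and the digits are uniquely determined.

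The core step is to evaluate $g_q(k,d)=\sum_{j=0}^{k-1}\lceil d/q^j\rceil$ in terms of $\sigma$ and the $\varepsilon_i$. For fixed $j$ we write
\[
\frac{d}{q^j}=\sigma q^{k-1-j}-\sum_{i>j}\varepsilon_iq^{i-1-j}-\sum_{i\le j}\varepsilon_iq^{i-1-j}.
\]
The last sum lies in $[0,1)$, because $\sum_{i\le j}\varepsilon_iq^{i-1}\le q^j-1$. Since the remaining terms are integers, this gives
\[
\left\lceil d/q^j\right\rceil=\sigma q^{k-1-j}-\sum_{i=j+1}^{k-1}\varepsilon_iq^{i-1-j}.
\]
Summing over $0\le j\le k-1$ yields $\sigma\sum_{j}q^{k-1-j}=\sigma[k]_q$. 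For each $i$ we collect $\varepsilon_i\sum_{j=0}^{i-1}q^{i-1-j}=\varepsilon_i[i]_q$. Therefore $g_q(k,d)=\sigma[k]_q-\sum_{i=1}^{k-1}\varepsilon_i[i]_q$, and so equality in Inequality~(\ref{eq_griesmer_bound}) holds iff $n$ equals Equation~(\ref{eq_griesmer_representation_length}).

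For the final equivalence we subtract $d$ from Equation~(\ref{eq_griesmer_representation_length}). Using $[k]_q-q^{k-1}=[k-1]_q$ and $[i]_q-q^{i-1}=[i-1]_q$, this gives exactly Equation~(\ref{eq_griesmer_representation_species}). Since $d$ is fixed, the two equations are equivalent.

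The only delicate point is the ceiling computation, namely checking that the fractional part lies in $[0,1)$, including the boundary case where it is $0$. The bound on $\sum_{i\le j}\varepsilon_iq^{i-1}$ handles this cleanly, so no real obstacle remains.
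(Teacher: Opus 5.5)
Your proof is correct and complete. The three ingredients are exactly what is needed:
\begin{itemize}
\item the termwise identity $\lceil d/q^j\rceil=\sigma q^{k-1-j}-\sum_{i=j+1}^{k-1}\varepsilon_i q^{i-1-j}$, justified by $0\le\sum_{i\le j}\varepsilon_i q^{i-1}\le q^j-1$;
\item the interchange of summation that produces the coefficients $[i]_q$;
\item the identity $[i]_q-q^{i-1}=[i-1]_q$.
\end{itemize}

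The paper states this lemma without proof and treats it as a standard computation. It asserts the uniqueness of $\sigma$ and the $\varepsilon_i$ only in the subsequent remark. So there is no argument in the paper to compare against, and your write-up supplies the routine verification it omits. Your existence and uniqueness paragraph is not needed for the equivalence itself, since your ceiling computation works for any admissible representation of $d$.
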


\begin{remark}
  Given $k$ and $d$ Equation~(\ref{eq_griesmer_representation_min_dist})
  always determines $\sigma$ and the $\varepsilon_i$ uniquely. This is
  different for Equation~(\ref{eq_griesmer_representation_species}) given
  $k$ and $n-d=s$. Here it may happen that no solution with
  $0\le \varepsilon_i\le q-1$ exists. By relaxing to
  $0\le \varepsilon_i\le q$ we can ensure existence and uniqueness is
  enforced by additionally requiring $\varepsilon_j=0$ for all $j<i$
  where $\varepsilon_i=q$ for some $i$. The same is true for
  Equation~(\ref{eq_griesmer_representation_length}) given $k$ and $n$.
  For more details we refer to \cite[Chapter 2]{govaerts2003classifications}
  which also gives pointers to Hamada's work on minihypers, see e.g.\ \cite{hamada1993characterization}.
\end{remark}
Lemma~\ref{lemma_parameters_griesmer_code} and Lemma~\ref{lemma_residual_code_bound} imply the following observations.
\begin{lemma}
  \label{lemma_griesmer_observations}
  Let $k\ge 2$.
  \begin{itemize}[noitemsep]
    \item[(1)] The residual code
               of an $[n,k,d]_q$ Griesmer code w.r.t.\ a codeword of weight $d$ is an $\left[n-d,k-1,\left\lceil\tfrac{d}{q}\right\rceil\right]_q$ Griesmer code.
    \item[(2)] For each positive integer $r$ we have
               $g_q\!\left(k,d+rq^{k-1}\right)=g_q(k,d)+r\cdot\frac{q^k-1}{q-1}$.
    \item[(3)] We have $d\equiv 0\pmod {q^{k-1}}$ iff $\varepsilon_i=0$
               for all $1\le i\le k-1$. Otherwise we have $d\equiv 0\pmod {q^{i-1}}$ and $d\not\equiv 0\pmod {q^{i}}$ for the smallest
               index $1\le i\le k-1$ with $\varepsilon_i\neq 0$.
    \item[(4)] For $d\equiv 0\pmod 4$ we have $g_2(k,d)= g_2(k,d-2)+3$.
  \end{itemize}
\end{lemma}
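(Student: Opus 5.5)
We prove the four items one at a time. Everything follows from the parameterization of Lemma~\ref{lemma_parameters_griesmer_code} together with the one-step bound of Lemma~\ref{lemma_residual_code_bound}.

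For (1), let $C$ be an $[n,k,d]_q$ Griesmer code and $c$ a codeword of weight $w=d$. Since $d<\tfrac{dq}{q-1}$, Lemma~\ref{lemma_residual_code_bound} shows that $\Res(C;c)$ is an $[n-d,k-1,\lceil d/q\rceil]_q$ code. The Griesmer bound (\ref{eq_griesmer_bound}) for this residual gives
$$n-d\ge g_q(k-1,\lceil d/q\rceil)=\sum_{i=0}^{k-2}\lceil d/q^{i+1}\rceil=g_q(k,d)-d=n-d,$$
using $\lceil\lceil d/q\rceil/q^i\rceil=\lceil d/q^{i+1}\rceil$. So equality holds and the residual is a Griesmer code.

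For (2), use $\lceil (d+rq^{k-1})/q^i\rceil=\lceil d/q^i\rceil+rq^{k-1-i}$ for $0\le i\le k-1$. Summing over $i$ adds $r\sum_{i=0}^{k-1}q^{i}=r[k]_q$. Equivalently, in (\ref{eq_griesmer_representation_min_dist}) one just replaces $\sigma$ by $\sigma+r$.

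For (3), reduce (\ref{eq_griesmer_representation_min_dist}) modulo powers of $q$. If all $\varepsilon_i=0$, then $d=\sigma q^{k-1}$. Otherwise let $i$ be minimal with $\varepsilon_i\ne0$. Then $d\equiv -\varepsilon_i q^{i-1}\pmod{q^i}$, and since $0<\varepsilon_i<q$ this is divisible by $q^{i-1}$ but not by $q^i$. For the converse, note that if some $\varepsilon_i\neq 0$ with $i\le k-1$, then $d\not\equiv0\pmod{q^i}$, so $d\not\equiv 0\pmod{q^{k-1}}$.

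For (4), take $q=2$ and $d\equiv0\pmod 4$, and compare the summands term by term.
\begin{itemize}[noitemsep]
\item For $i=0$: $d-(d-2)=2$.
\item For $i=1$: $d/2-(d/2-1)=1$.
\item For $i\ge 2$: write $d=4m$. Then $\lceil (4m-2)/2^i\rceil=\lceil 4m/2^i\rceil$, because $4m-2$ and $4m$ lie in the same interval $(2^i(t-1),2^it]$. This holds since subtracting $2<2^i$ can only cross a multiple of $2^i$ when $4m-2<2^i t'\le 4m$ for some multiple $2^i t'$, which forces $2^it'\in\{4m-1,4m\}$ and so $2^it'=4m$ (the value $4m-1$ is odd). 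That case is exactly the right endpoint, so the ceilings agree.
\end{itemize}
Summing gives $g_2(k,d)=g_2(k,d-2)+3$.

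The only step requiring care is this ceiling comparison in (4), handled by the case split above; the rest is routine bookkeeping.
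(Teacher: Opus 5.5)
Items (1)--(3) are correct and follow the route the paper indicates; the paper gives no separate proof and only remarks that all four items follow from Lemma~\ref{lemma_parameters_griesmer_code} and Lemma~\ref{lemma_residual_code_bound}. The flaw is in (4), in your justification of $\lceil (4m-2)/2^i\rceil=\lceil 4m/2^i\rceil$: your crossing criterion uses the wrong half-open interval.

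Since $y\mapsto\lceil y/N\rceil$ is constant on the intervals $(N(t-1),Nt]$, the values at $x-2$ and $x$ differ iff some multiple of $N$ lies in $[x-2,x)$, not in $(x-2,x]$. Your criterion would, for example, assert $\lceil 4m/4\rceil=\lceil (4m+2)/4\rceil$, because no multiple of $4$ lies in $(4m,4m+2]$; in fact the two sides are $m$ and $m+1$. So you rule out the harmless candidate $4m$ and the impossible $4m-1$. You never rule out the one candidate that would break the equality, namely $2^it'=4m-2$.

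The repair is one line. Since $4m-2\equiv 2\pmod 4$, it is not divisible by $2^i$ for $i\ge 2$. Equivalently, $2^i(t-1)<4m$ with both sides divisible by $4$ forces $2^i(t-1)\le 4m-4<4m-2$. With that fix your proof is complete.

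Alternatively, closer to the paper's intended route, (4) follows at once from Lemma~\ref{lemma_parameters_griesmer_code} and item (3). For $k\ge 3$, $4\mid d$ gives $\varepsilon_1=\varepsilon_2=0$, and passing to $d-2$ only sets $\varepsilon_2=1$. For $k=2$, $4\mid d$ gives $\varepsilon_1=0$, and passing to $d-2$ lowers $\sigma$ by one. In both cases the length drops by $[2]_2=3$.
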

For Lemma~\ref{lemma_griesmer_observations}.(1) see e.g.\
\cite[Lemma 29]{bouyukliev2009classification}, \cite{deng2026generalized}
and for Lemma~\ref{lemma_griesmer_observations}.(2) see e.g.\ \cite[Lemma 21]{bouyukliev2009classification}, \cite{dodunekov1985optimal}. Clearly also
Lemma~\ref{lemma_griesmer_observations}.(4) can be generalized to arbitrary fields sizes $q$ and different powers of $q$ for the step size.

Using the notation and restrictions from Lemma~\ref{lemma_parameters_griesmer_code}, fixing some proper subspaces $S_1,\dots,S_l$, and denoting the ambient space $\PG(k-1,q)$ by $V$, we can state a geometric variant of the Solomon--Stiffler construction. We say that $\cM=\sigma\chi_V-\sum_{j=1}^l \chi_{S_j}$ is of \emph{Solomon--Stiffler type} $[\sigma;\varepsilon_1,\dots,\varepsilon_{k-1}]$, where $l=\sum_{i=1}^{k-1}$, iff exactly $\varepsilon_i$ subspaces $S_j$ have dimension $1\le i\le k-1$. If $\cM$ is a multiset of points, i.e.\ if $\cM(P)\ge 0$ for all points $P$, then $\cM$ corresponds to a Griesmer code. An easy sufficient condition
is $\sigma\ge \sum_{i=1}^{k-1} \varepsilon_i$, where the right hand side is upper bounded by $(q-1)(k-1)$. For more refined sufficient conditions allowing a suitable selection of the $S_j$ we e.g.\ refer to \cite{belov1974construction}. Codes of Solomon--Stiffler type are not the only known class of Griesmer codes, see e.g.\ \cite{belov1974construction,chen2025griesmer,hamada1993new,helleseth1983new,hellesth1992projective,helleseth1981new}. For $q=2$ and $d\le 2^{k-1}$ Griesmer codes have to be either of Solomon--Stiffler or Belov type \cite{belov1974construction}, see \cite{Helleseth1981}. The latter case cannot occur if $d$ is doubly-even and a singly-even example is given by the unique $[38,6,18]_2$ code. So, due to the Griesmer SO bound in Inequality~(\ref{ie_Griesmer_SO}) a projective Griesmer SO code has to be of Solomon--Stiffler type cf.~\cite{shi2025binary}.
The multiset of points $\sigma\cdot \chi_V-\cM$, i.e.\ the multiset that is subtracted, is an optimal minihyper (anticode in coding theory terms). For corresponding classification results we refer  e.g.\ to \cite{hamada1993characterization,Storme2004}.

\section{Optimal binary self-orthogonal codes with dimension at most seven}
\label{sec_opt_at_most_seven}
The determination of the minimum possible length $\tilde{n}_2(k,d)$ of a self-orthogonal $k$-dimensional binary linear code with minimum distance $d$ was completed for $k\le 7$ in \cite{shi2025binary}, see e.g.\ \cite{BouyuklievOstergaard2006Binary,kim2022self,kim2021embedding,li2008classification,shi2023two} for previous results. In this section we briefly redo the entire computation and simplify some arguments. All of our lower bounds have computer-free proofs, while e.g.\ the non-existence of $[41,7,18]_2$, $[56,7,26]_2$, $[63,7,30]_2$, and $[103,7,50]_2$ SO codes was shown in \cite{shi2025binary} using exhaustive enumeration via the software package \texttt{Q-EXTENSION} \cite{bouyukliev2007qextension}. We also state general constructions that cover cases that previously relied on computer searches.

First we summarize some easy improvements of $\tilde{n}_2(k,d)\ge \tilde{g}_2(k,d)$.
\begin{lemma}
  \label{lemma_non_existence_summary}
  Let $d$ be even and $k\ge 2$.
  \begin{itemize}[noitemsep]
   \item[(1)] If $d\equiv 2\pmod 4$ and $n<n_2(k-1,d+2)$, then no $[n,k,d]_2 $ SO code exists.
   \item[(2)] For integers $2\le t\le k-1$ and $d=2^t-2$ we have $\tilde{n}_2(k,d)\ge 2^{t+1}+k-t-3=\tilde{g}_2(k,d)+1$.
   \item[(3)] For integers $2\le t\le k-2$ and $d=2^{k-1}-2^t-2$ we have
   $\tilde{n}_2(k,d)\ge 2^k-2^{t+1}-1=\tilde{g}_2(k,d)+1$.
   \item[(4)] If $n_2(k,d+2)>n$ and $n-d<n_2\!\left(k-1,2\left\lceil\tfrac{d}{4}\right\rceil\right)$, then no $[n,k,d]_2 $ SO code exists.
  \end{itemize}
\end{lemma}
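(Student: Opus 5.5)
The four items all follow from the Griesmer bound~(\ref{eq_griesmer_bound}) applied to a suitable subcode or residual code, combined with the parity information in Proposition~\ref{prop_SO_characterization}. Items (2) and (3) are direct instances of item (1).

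For (1), let $C$ be an $[n,k,d]_2$ SO code with $d\equiv 2\pmod 4$. By Proposition~\ref{prop_SO_characterization}(B), the doubly-even codewords form a subcode $D$ of dimension $k-1$ or $k$. Every nonzero codeword of $D$ has weight at least $d$ and divisible by $4$. Since $d\equiv 2\pmod 4$, such a weight is at least $d+2$. If $\dim D=k$, then $n\ge n_2(k,d+2)\ge n_2(k-1,d+2)$; otherwise $D$ is an $[n,k-1,d+2]_2$ code. In both cases $n\ge n_2(k-1,d+2)$, which gives (1).

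For (2), note $d=2^t-2\equiv 2\pmod 4$ since $t\ge 2$, so (1) gives $\tilde n_2(k,d)\ge n_2(k-1,2^t)\ge g_2(k-1,2^t)$. To evaluate this, sum $\lceil 2^t/2^i\rceil$ over $0\le i\le k-2$. The indices $i\le t$ contribute $2^{t+1}-1$ and each further index contributes $1$, for a total of $2^{t+1}+k-t-3$. The boundary case $t=k-1$ gives the same value, because the sum stops at $i=t-1$ and yields $2^{t+1}-2$.

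To identify this with $\tilde g_2(k,d)+1$, use Lemma~\ref{lemma_griesmer_observations}(4). This gives $g_2(k,d)=g_2(k,2^t)-3=(2^{t+1}+k-t-2)-3$. Because $d\equiv 2\pmod 4$, we have $\tilde g_2(k,d)=g_2(k,d)+1=2^{t+1}+k-t-4$.

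For (3), again $d=2^{k-1}-2^t-2\equiv 2\pmod 4$ as $t\ge 2$. Item (1) together with the Griesmer bound gives $n\ge g_2(k-1,2^{k-1}-2^t)$. Here $\sum_{i=0}^{k-2}2^{k-1-i}=2^k-2$, and the subtracted part is $\sum_{i=0}^{k-2}\lfloor 2^t/2^i\rfloor=2^{t+1}-1$ because $t\le k-2$. The total is $2^k-2^{t+1}-1$. The identification with $\tilde g_2(k,d)+1$ again goes through Lemma~\ref{lemma_griesmer_observations}(4), applied to $d+2=2^{k-1}-2^t$.

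For (4), assume an $[n,k,d]_2$ SO code $C$ exists. Since $n<n_2(k,d+2)$, $C$ has a codeword $c$ of weight exactly $d<2d$. By Lemma~\ref{lemma_residual_code_bound}, $\Res(C;c)$ is an $[n-d,k-1]_2$ code with minimum distance at least $d/2$. By Proposition~\ref{prop_SO_characterization}(C) this residual code is even, so its minimum distance is at least $2\lceil d/4\rceil$; this is the content of Lemma~\ref{lemma_residual_SO_code_bound} when $d/2$ is odd. Hence $n-d\ge n_2(k-1,2\lceil d/4\rceil)$, a contradiction.

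None of these steps is a real obstacle. The only place that needs care is the Griesmer sums and their boundary cases ($t=k-1$ in (2), $t\le k-2$ in (3)), together with matching them to $\tilde g_2(k,d)+1$ via Lemma~\ref{lemma_griesmer_observations}(4).
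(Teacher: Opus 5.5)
Your proposal is correct and follows the paper's proof essentially step for step. You prove (1) via the doubly-even subcode from Proposition~\ref{prop_SO_characterization}(B), obtain (2) and (3) as Griesmer-bound instances of (1), and prove (4) via the even residual code of a minimum-weight codeword. In (2) and (3) you get $g_2(k,d)$ through Lemma~\ref{lemma_griesmer_observations}(4) rather than by summing directly, which is equivalent. The one point to make explicit is in (4): $n<n_2(k,d+2)$ alone only yields a codeword of weight $d$ or $d+1$, and it is the evenness of $C$ that rules out $d+1$, as the paper notes.
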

\begin{proof}
   Proposition~\ref{prop_SO_characterization}.(B) implies the existence of an $[\le n,k-1,d+2]_2$ subcode, which yields (1). For an $\left[n',k-1,2^t\right]_2$ code $C'$, we write $2^t=2\cdot 2^{k-2}-\sum\limits_{i=t}^{k-2} 2^i$, so that
    $$
      n'\ge g_2\!\left(k\!-\!1,2^t\right) =2\cdot\left(2^ {k-1}\!-\!1\right)-\sum_{i=t}^{k-2} \left(2^{i+1}\!-\!1\right)
      =2^k-2-2^{t+1}\cdot \left(2^{k-t-1}-1\right)\,+\,(k-t-1)=2^{t+1}+k-t-3.
    $$
    Similarly, we compute $g_2\!\left(k,2^t-2\right)=2^{t+1}+k-t-5$, so that
    $\tilde{n}_2\!\left(k,2^t-2\right)\ge n'\ge 2^{t+1}+k-t-3=g_2\!\left(k,2^t-2\right)+2=\tilde{g}_2\!\left(k,2^t-2\right)+1$, i.e.\ statement (2).
    For an $\left[n'',k-1,2^{k-1}-2^t\right]_2$ code $C''$, we write $2^{k-1}-2^t=2\cdot 2^{k-2}-2^t$, so that
    $$
       n''\ge g_2\!\left(k\!-\!1,2^{k-1}-2^t\right) =2\cdot\left(2^ {k-1}\!-\!1\right)-\left(2^{t+1}\!-\!1\right)
      =2^k-2^{t+1}-1
    $$
    and
    $$
      g_2\!\left(k,2^{k-1}-2^t-2\right) =\left(2^ k\!-\!1\right)-\left(2^{t+1}\!-\!1\right)-3=2^k-2^{t+1}-3,
    $$
    which implies statement~(3). For (4) we apply Lemma~\ref{lemma_residual_code_bound} with a codeword of minimum weight $d$ (using $n_2(k,d+2)>n$ and the non-existence of codewords of weight $d+1$).
\end{proof}
Applying Lemma~\ref{lemma_non_existence_summary}.(2) with $t\in\{k-1,k-2\}$ gives \cite[Theorem 6]{li2026nonexistence} and applying Lemma~\ref{lemma_non_existence_summary}.(3) with $t=u-1$ gives \cite[Theorem 9]{li2026nonexistence}. Combining Lemma~\ref{lemma_non_existence_summary}.(1) with known lower bounds for $n_2(k,d)$ \cite{grassl_codetables} yields e.g.\ the following non-existence results.
\begin{corollary}
  \label{cor_ex_non_existence_1}
  There are no $[13,4,6]_2$, $[14,5,6]_2$, $[22,5,10]_2$, $[29,5,14]_2$, $[15,6,6]_2$, $[23,6,10]_2$, $[30,6,14]_2$, $[39,6,18]_2$, $[46,6,22]_2$, $[54,6,26]_2$, $[61,6,30]_2$, $[17,7,6]_2$, $[25,7,10]_2$, $[41,7,18]_2$, $[47,7,22]_2$, $[62,7,30]_2$, $[72,7,34]_2$, $[79,7,38]_2$, $[87,7,42]_2$, $[94,7,46]_2$, $[103,7,50]_2$, $[110,7,54]_2$, $[118,7,58]_2$, $[125,7,62]_2$,
  $[18,8,6]_2$, $[26,8,10]_2$, $[34,8,14]_2$, $[42,8,18]_2$, $[49,8,22]_2$, $[58,8,26]_2$, $[74,8,34]_2$, $[81,8,38]_2$, $[137,8,66]_2$, $[144,8,70]_2$, $[152,8,74]_2$, $[159,8,78]_2$, $[168,8,82]_2$, $[175,8,86]_2$, $[183,8,90]_2$, $[190,8,94]_2$, $[200,8,98]_2$, $[207,8,102]_2$, $[215,8,106]_2$, $[222,8,110]_2$, $[231,8,114]_2$, $[238,8,118]_2$, $[246,8,122]_2$, and $[253,8,126]_2$ SO codes.
\end{corollary}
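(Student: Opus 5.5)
Every listed parameter set $[n,k,d]_2$ has $d\equiv 2\pmod 4$, so we apply Lemma~\ref{lemma_non_existence_summary}.(1) to each of them. For each triple we have to check that $n<n_2(k-1,d+2)$. Then an $[n,k,d]_2$ SO code would, via Proposition~\ref{prop_SO_characterization}.(B), contain a doubly-even subcode of dimension at least $k-1$. Since $d\equiv 2\pmod 4$, every doubly-even codeword has weight at least $d+2$. This subcode would therefore be an $[n,k-1,d+2]_2$ code (after removing zero coordinates, of length at most $n$), which contradicts $n<n_2(k-1,d+2)$.

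The whole task therefore reduces to a list of lower bounds $n_2(k-1,d+2)\ge n+1$. We proceed dimension by dimension, $k=4,\dots,8$, and use known values of $n_2(k-1,\cdot)$ for $k-1\le 7$, which are completely known~\cite{bouyukliev2000smallest,grassl_codetables}. In most cases the Griesmer bound alone suffices. For example, $[13,4,6]_2$ needs $n_2(3,8)=g_2(3,8)=8+4+2=14>13$. Likewise $[22,5,10]_2$ needs $g_2(4,12)=12+6+3+2=23>22$. For the large distances in dimension $8$, such as $[253,8,126]_2$, we use $g_2(7,128)=128+64+\dots+2=254>253$, and similarly $g_2(7,d+2)=n+1$ throughout the range $66\le d\le 126$. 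We verify this via Lemma~\ref{lemma_parameters_griesmer_code} and Lemma~\ref{lemma_griesmer_observations}.(2) rather than term by term.

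The main obstacle is the handful of cases where the Griesmer bound is not attained by $n_2(k-1,d+2)$, so that $g_2(k-1,d+2)\le n$ and we must quote the exact value $n_2(k-1,d+2)=g_2(k-1,d+2)+1$ (or larger) from the tables. This happens, for instance, in small-distance cases where no Griesmer code exists, such as $n_2(k-1,8)$ for $k-1\ge 5$. One example is $[15,6,6]_2$, which needs $n_2(5,8)=16$ while $g_2(5,8)=15$. Another is $[41,7,18]_2$, which needs $n_2(6,20)\ge 42$, and the exact value must be taken from~\cite{grassl_codetables}. We would simply tabulate, for each entry, the value $n_2(k-1,d+2)$ together with its source (Griesmer bound or table) and confirm the strict inequality.
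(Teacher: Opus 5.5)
Your proposal is correct and is exactly the paper's argument: Lemma~\ref{lemma_non_existence_summary}.(1) combined with known values of $n_2(k-1,d+2)$. The only slip is arithmetic: $g_2(5,8)=8+4+2+1+1=16$, attained by $\mathcal{RM}_2(1,4)$, so $[15,6,6]_2$ already follows from the Griesmer bound. The table values are genuinely needed only for $[17,7,6]_2$, $[25,7,10]_2$, $[41,7,18]_2$ and the dimension-$8$ cases with $d\le 38$; of these, $[34,8,14]_2$ and $[58,8,26]_2$ require $n_2(7,d+2)=g_2(7,d+2)+2$.
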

Combining Lemma~\ref{lemma_non_existence_summary}.(4) with known lower bounds for $n_2(k,d)$ \cite{grassl_codetables} yields e.g.\ the following non-existence results.
\begin{corollary}
  \label{cor_ex_non_existence_2}
  There are no $[105,8,50]_2$, $[112,8,54]_2$, and $[129,9,62]_2$ SO codes.
\end{corollary}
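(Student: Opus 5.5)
The plan is to verify, for each of the three parameter sets, the two hypotheses of Lemma~\ref{lemma_non_existence_summary}.(4): namely $n_2(k,d+2)>n$ and $n-d<n_2\!\left(k-1,2\left\lceil\tfrac{d}{4}\right\rceil\right)$. Both are conditions on ordinary (not necessarily SO) linear codes. I would check each one either directly by the Griesmer bound~(\ref{eq_griesmer_bound}) or by quoting the exact value or lower bound of $n_2(\cdot,\cdot)$ from \cite{grassl_codetables}.

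\emph{The case $[105,8,50]_2$.} Here $g_2(8,52)=52+26+13+7+4+2+1+1=106>105$, so the first hypothesis follows from the Griesmer bound alone. For the second we need $55<n_2(7,26)$. The Griesmer bound only gives $g_2(7,26)=54$, so I would quote $n_2(7,26)\ge 56$. This is legitimate because $n_2(7,\cdot)$ is completely known \cite{bouyukliev2000smallest}.

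\emph{The case $[112,8,54]_2$.} Here $g_2(8,56)=113>112$, which settles the first hypothesis. For the second we need $58<n_2(7,28)$. The Griesmer bound only gives $g_2(7,28)=57$, so again I would take the exact value of $n_2(7,28)$ from the complete $k=7$ table.

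\emph{The case $[129,9,62]_2$.} This is the delicate one. Here $g_2(9,64)=64+32+16+8+4+2+1+1+1=129$, so the Griesmer bound does not suffice and we must know that no $[129,9,64]_2$ Griesmer code exists. One could argue this via the Solomon--Stiffler/Belov characterization for $d\le 2^{k-1}$ \cite{Helleseth1981}. Writing $64=2\cdot 2^8-2^7-2^6$, the Solomon--Stiffler type would subtract a $7$-space and a $6$-space from $2\chi_V$. Solomon--Stiffler type with these data still yields a multiset, so that shortcut is unclear. I would therefore rely on the value $n_2(9,64)\ge 130$ from \cite{grassl_codetables}. The second hypothesis requires $67<n_2(8,32)$, where $g_2(8,32)=65$. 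Here I would again use the exact value from the complete $k=8$ table \cite{bouyukliev2000smallest}.

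I expect the main obstacle to be exactly these table look-ups for the non-Griesmer values $n_2(7,26)$, $n_2(7,28)$, $n_2(8,32)$ and $n_2(9,64)$. I have not checked that the tabulated values are large enough to make the three strict inequalities hold; that needs confirming against \cite{grassl_codetables}, and if any one fails, the corresponding case would not follow from Lemma~\ref{lemma_non_existence_summary}.(4) as stated. Once the values are confirmed, the corollary follows immediately from Lemma~\ref{lemma_non_existence_summary}.(4), whose proof applies the residual code argument (Lemma~\ref{lemma_residual_code_bound}) to a codeword of weight exactly $d$.
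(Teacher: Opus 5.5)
Your proposal is correct and is exactly the paper's argument: Lemma~\ref{lemma_non_existence_summary}.(4) combined with known values of $n_2$. The inequalities you left unchecked do hold: $n_2(7,26)=56>55$, $n_2(7,28)=59>58$ and $n_2(8,32)=68>67$ (all recorded in the paper's remarks after the $k=7$ and $k=8$ results), and $n_2(9,64)>129$. Your Solomon--Stiffler aside is miscomputed, though harmless, since you did not rely on it: the correct representation is $64=2^8-2^7-2^6$ with $\sigma=1$, so one would have to remove a hyperplane and a $7$-space from $\chi_V$ in $\PG(8,2)$; these always intersect and Belov's condition $8+7\le 9$ fails, so the Solomon--Stiffler/Belov characterization for $d\le 2^{k-1}$ does directly exclude a $[129,9,64]_2$ Griesmer code.
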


For the constructive side we have:
\begin{lemma}
  \label{lemma_griesmer_code_implies_SO}
  Let $d$ be even and $k\ge 2$.
  \begin{itemize}[noitemsep]
    \item[(1)] If $d\equiv 0\pmod 4$ and a $[g_2(k,d),k,d]_2$ Griesmer code
               exists, then it is self-orthogonal.
    \item[(2)] If $d\equiv 2\pmod 4$, $d> 2^{k-1}$ and
              a $[g_2(k,d+2),k,d+2]_2$ Griesmer
               code exists, then there is a $[g_2(k,d)+1,k,d]_2$ self-orthogonal code.
    \item[(3)] If $d\equiv 2\pmod 4$, $d> 2^{k-1}$ and
              a doubly-even $[n+2,k,d+2]_2$
               code exists, then there is a $[n,k,d]_2$ self-orthogonal code.
  \end{itemize}
\end{lemma}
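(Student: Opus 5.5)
Part (1) follows from Theorem~\ref{thm_griesmer_divisible} with $p=2$ and $r=2$: a $[g_2(k,d),k,d]_2$ Griesmer code with $4\mid d$ is doubly-even. A doubly-even binary code is self-orthogonal by Proposition~\ref{prop_SO_characterization}, since (B) holds with the doubly-even subcode being all of $C$.

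For (2) and (3) the plan is to delete a doubly covered point, as in Lemma~\ref{lemma_minus_double_point}. It suffices to prove (3) and then obtain (2) from it. For (2), set $d'=d+2\equiv 0\pmod 4$. By (1) and Theorem~\ref{thm_griesmer_divisible}, a $[g_2(k,d+2),k,d+2]_2$ Griesmer code is doubly-even, and Lemma~\ref{lemma_griesmer_observations}.(4) gives $g_2(k,d+2)=g_2(k,d)+3$. Setting $n+2=g_2(k,d)+3$, i.e.\ $n=g_2(k,d)+1$, part (3) then yields the claimed $[g_2(k,d)+1,k,d]_2$ SO code.

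For (3), let $C$ be a doubly-even $[n+2,k,d+2]_2$ code with associated multiset $\cM$ in $\PG(k-1,2)$. Since $k\ge 2$, $C$ is SO. We may assume $C$ has full length: otherwise we drop zero coordinates to get a shorter doubly-even code, and at the end we pad with pairs of zero coordinates, which preserves self-orthogonality. Padding needs an even number of extra coordinates, so a little care is needed here; it may be easier to note that $C$ is full length, because a zero coordinate would leave a code of length $n+1$ to handle. The key claim is that some point $P$ satisfies $\cM(P)\ge 2$. Then $\cM-2\chi_P$ is a multiset, SO by Corollary~\ref{cor_lin_comb_SO} (as $2\chi_P$ is SO, or directly via Lemma~\ref{lemma_geometric_SO}, because subtracting $2\chi_P$ changes all hyperplane and codimension-$2$ counts by even amounts). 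Every hyperplane count drops by $0$ or $2$, so the weights drop by at most $2$, giving minimum distance $\ge d$. It remains to check that $\cM-2\chi_P$ is still spanning, so the dimension stays $k$. If $\cM(P)\ge 3$ this is immediate. If $\cM(P)=2$, removing $P$ could destroy spanning, and then some codeword would have weight $0$. But a hyperplane $H$ not containing $P$ has $\wt=n+2-\cM(H)\ge d+2$, so its new weight is $\wt-2\ge d>0$, and a hyperplane through $P$ keeps its weight. Hence all nonzero codewords keep positive weight and the dimension is $k$.

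The main obstacle is the existence of a point with multiplicity at least $2$, i.e.\ showing $C$ is not projective. This is where $d>2^{k-1}$ enters. For a projective code, $\cM\le\chi_V$, so every weight is at most the corresponding simplex weight $2^{k-1}$; this gives $d+2\le 2^{k-1}$, contradicting $d>2^{k-1}$. More precisely, $\wt(c)=\#\{P\notin H:\cM(P)=1\}\le 2^{k-1}$, so the minimum distance $d+2$ would be at most $2^{k-1}$, which is impossible. If $C$ is not full length, the same bound applies to the points of its support. This settles (3), and with it (2).
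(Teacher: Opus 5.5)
Your proof is correct and follows the paper's approach. Part (1) uses the divisibility theorem for Griesmer codes, and parts (2) and (3) subtract $2\chi_P$ for a point $P$ with $\cM(P)\ge 2$. You differ only in two small ways: you get such a point from the bound $\wt(c)\le 2^{k-1}$ for projective codes instead of the paper's pigeonhole count $\#\cM\ge 2^k>2^k-1$, and you deduce (2) from (3) via $g_2(k,d+2)=g_2(k,d)+3$.

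Your worry about zero coordinates is unnecessary. Appending any number of zero coordinates, odd or even, changes neither weights, dimension, nor self-orthogonality. Equivalently, you can simply delete the two coordinates whose columns represent $P$.
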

\begin{proof}$\,$
  \begin{itemize}
    \item[(1)] By Theorem~\ref{thm_griesmer_divisible} each such code
               is doubly-even and thus self-orthogonal
    \item[(2)] Let $C$ be the initial Griesmer code and $\cM$ be the
               corresponding multiset of points. As observed in (1), $C$
               and $\cM$ are self-orthogonal. Since $d>2^{k-1}$ we
               have $\#\cM\ge 2^k$ and there exists a point $P$ with
               multiplicity $\cM(P)\ge 2$. With this $\cM-2\chi_P$ is SO,
               has minimum distance at least $d$, and cardinality
               $g_2(k,d+2)-2=g_2(k,d)+1$, see Subsection~\ref{subsec_Solomon_Stiffler}. Since $d>2^{k-1}\ge 2$ we have that $\cM-2\chi_P$ is spanning.
    \item[(3)] Let $C$ be the initial doubly-even code and $\cM$ be the
               corresponding multiset of points. Similar to (2) we
               conclude that $C$ is self-orthogonal and $\cM-2\chi_P$ has the desired properties for some suitable point $P$.
  \end{itemize}
\end{proof}

To treat the case $d\equiv 2\pmod 4$ for $d\le 2^{k-1}$ we first construct
special non-projective codes and then apply the idea of the Solomon--Stiffler construction, i.e.\ the removal of suitable minihypers.
\begin{lemma}
  \label{lemma_geometric_construction_1}
  For $k\ge r\ge 2$ let $V$ denote $\PG(k-1,2)$, $A\le V$ an $r$-space, and
  $B\le A$ an $(r-1)$-space. Then, $\chi_V+\chi_A-2\chi_B$ corresponds to
  a $2^{r-1}$-divisible $\left[2^k,k,2^{k-1}\right]_2$ non-projective two-weight
  code with maximum weight $2^{k-1}+2^{r-1}$.
\end{lemma}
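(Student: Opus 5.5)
Set $\cM:=\chi_V+\chi_A-2\chi_B$. The plan is to check the defining properties one at a time, working throughout with hyperplane multiplicities via $\wt(c)=\#\cM-\cM(H)$.

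\emph{Multiset and cardinality.} Points of $B$ receive multiplicity $1+1-2=0$. Points of $A\setminus B$ receive $2$, and all other points receive $1$, so $\cM$ is a genuine multiset of points. Its cardinality is
$$[k]_2+[r]_2-2[r-1]_2=2^k-1+2^r-1-2^r+2=2^k.$$
Since $A\setminus B$ has points of multiplicity $2$ (it is nonempty because $r\ge 1$), $\cM$ is non-projective.

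\emph{Spanning.} The support is $V\setminus B$. The complement of a proper subspace of $V$ spans $V$, because any point of $B$ is the sum of two points outside $B$ when $k\ge 2$. So the associated code has dimension $k$.

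\emph{Hyperplane multiplicities.} For a hyperplane $H$ we have
$$\cM(H)=[k-1]_2+|A\cap H|-2|B\cap H|,$$
and I distinguish three cases according to how $H$ meets $A$ and $B$.
\begin{itemize}
\item If $A\le H$, then $\cM(H)=2^{k-1}-1+(2^r-1)-2(2^{r-1}-1)=2^{k-1}$, giving weight $2^{k-1}$.
\item If $A\not\le H$ but $B\le H$, then $A\cap H=B$, so $\cM(H)=2^{k-1}-1-(2^{r-1}-1)=2^{k-1}-2^{r-1}$, giving weight $2^{k-1}+2^{r-1}$.
\item If $B\not\le H$, then $A\cap H$ is an $(r-1)$-space and $B\cap H$ is an $(r-2)$-space. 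Hence $\cM(H)=2^{k-1}-1+2^{r-1}-1-2(2^{r-2}-1)=2^{k-1}$, giving weight $2^{k-1}$.
\end{itemize}
For $r=2$ in the third case, $B\cap H$ is the zero space and the formula $[0]_2=0$ still applies. In the second case I use that $A\cap H$ is a hyperplane of $A$ containing $B$, and therefore equals $B$.

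\emph{Conclusion.} The only weights are $2^{k-1}$ and $2^{k-1}+2^{r-1}$, both of which occur. Hence the code is a two-weight code with minimum distance $2^{k-1}$ and maximum weight $2^{k-1}+2^{r-1}$. Both weights are divisible by $2^{r-1}$ because $r\le k$, which gives $2^{r-1}$-divisibility.

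The only real care needed is the degenerate small-$r$ bookkeeping, namely $r=2$ and the use of $[0]_2=0$, together with the spanning check. Everything else is direct counting.
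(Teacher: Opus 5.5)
Your proof is correct and is essentially the paper's argument: the paper's proof only says to consider the possible intersections of hyperplanes with $A$ and $B$, and your three-case count, together with the routine checks of cardinality, spanning, and non-projectivity, is exactly that argument written out. The one point you state without justification, that both weights actually occur, is immediate: some hyperplane misses a point of $B$, and some hyperplane contains $B$ but not a chosen point of $A\setminus B$.
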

\begin{proof}
  Consider the different possible intersections of hyperplanes with $A$ and $B$.
\end{proof}
We remark that the six non-equivalent full-length $[64,6,32]_2$ codes are all of this type (allowing $r=1$ as a degenerated case). Since the construction guarantees points of multiplicity $2$ for $r\neq k$ and self-orthogonality for $r\ge 3$, we can obtain an $[62,6,30]_2$ SO code, that turns out to be optimal. Similar to the Solomon-Stiffler construction we can remove several suitable selected subspaces. To this end, we mention that for each $k/2\le r<k$ with $k\ge 3$ there exists a partition of $\PG(k-1,q)$ into one $r$-space and $q^{r}$ $(k-r)$-spaces \cite{Beutelspacher1978,bu1980partitions}.

\begin{proposition}
  \label{prop_construction_griesmer_plus_2_singly_even}
  Let $k\ge 5$, $d\equiv 2\pmod 4$ with $d\le 2^{k-1}$, and $\varepsilon_1,\dots,\varepsilon_{k-1}$ be uniquely defined as in Lemma~\ref{lemma_parameters_griesmer_code}. Denote the largest index of the $\varepsilon_i\neq 0$ by $j$ and the second largest by $h$, where we set $h=-\infty$ if there is no second largest index.
  \begin{itemize}[noitemsep]
    \item[(1)] If $j=2$, then there exists a $[g_2(k,d)+2,k,d]_2$ SO code.
    \item[(2)] If $j=k-2$ and $d=2^{k-1}-2^{j-1}-2$, then there exists
               a $[g_2(k,d)+2,k,d]_2$ SO code.
    \item[(3)] If $3\le j\le k-3$ and $j+h\le k$, then there exists
               a $[g_2(k,d)+2,k,d]_2$ SO code.
  \end{itemize}
\end{proposition}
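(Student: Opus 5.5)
The plan is to start from the non-projective SO multiset of Lemma~\ref{lemma_geometric_construction_1} with $r=3$ and modify it in the spirit of Solomon--Stiffler. Let $V=\PG(k-1,2)$, let $A$ be a $3$-space (a plane) and $B\le A$ a $2$-space (a line), and put $\cM_0=\chi_V+\chi_A-2\chi_B$. By Lemma~\ref{lemma_geometric_construction_1}, $\cM_0$ has cardinality $2^k$ and satisfies $\cM_0(H)\le 2^{k}-2^{k-1}$ for every hyperplane $H$. It is also SO: it is doubly-even because $r-1=2$, or alternatively one can argue via Corollary~\ref{cor_lin_comb_SO}, since $\chi_V$ and $\chi_A$ are SO and $2\chi_B$ is SO. Its point multiplicities are $0$ on $B$, $2$ on $A\setminus B$, and $1$ elsewhere.

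Next we read off the representation of $d$ from Lemma~\ref{lemma_parameters_griesmer_code}. Since $d\le 2^{k-1}$ and $d\equiv 2\pmod 4$, we have $\sigma=1$, $\varepsilon_1=0$ and $\varepsilon_2=1$ by Lemma~\ref{lemma_griesmer_observations}.(3). Hence
\[
d=2^{k-1}-2-\sum_{i\ge 3}\varepsilon_i2^{i-1},\qquad
g_2(k,d)=2^k-4-\sum_{i\ge 3}\varepsilon_i(2^i-1).
\]
Instead of removing a line for $\varepsilon_2$, which would destroy self-orthogonality, we remove $2\chi_P$ for a point $P\in A\setminus B$. For every $i\ge 3$ with $\varepsilon_i=1$ we remove $\chi_{S_i}$ for an $i$-space $S_i$. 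The candidate is therefore
\[
\cM=\cM_0-2\chi_P-\sum_{i\ge 3,\ \varepsilon_i=1}\chi_{S_i}.
\]
Its cardinality is $2^k-2-\sum_{i\ge3}\varepsilon_i(2^i-1)=g_2(k,d)+2$. Each hyperplane meets $S_i$ in at least $2^{i-1}-1$ points, so the weight of the codeword for $H$ drops by at most $2^{i-1}$, and removing $2\chi_P$ drops it by at most $2$. The minimum distance is therefore at least $2^{k-1}-2-\sum\varepsilon_i2^{i-1}=d$. Self-orthogonality follows from Corollary~\ref{cor_lin_comb_SO}, because all subtracted pieces, namely $2\chi_P$ and $i$-spaces with $i\ge 3$, are SO. Spanning is clear since $d>0$ forces a spanning support.

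The real work, and the expected main obstacle, is to guarantee that $\cM$ is a multiset. It suffices that the $S_i$ are pairwise disjoint, disjoint from $B$, and do not contain $P$. Multiplicity $2$ on $A\setminus B$ would even allow one overlap there.

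In case~(1) there are no $S_i$, so nothing needs to be arranged.

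In case~(2) we need a single $(k-2)$-space $S$ disjoint from the line $B$. This exists since $(k-2)+2\le k$: take a complement of $B$, for instance. We then choose $P\in A\setminus B$ outside $S$, which is possible because $S$ meets the plane $A$ in at most a point.

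In case~(3) we use the partition result of \cite{Beutelspacher1978,bu1980partitions}. Since $k-j\ge 3$ and $j\ge k/2$ need not hold, we may have to apply it with the roles of $j$ and $k-j$ chosen appropriately. In the basic case we partition $V$ into a $j$-space $S_j$ and $2^j$ subspaces of dimension $k-j$. Because $h\le k-j$, the $h$-space $S_h$ fits into one of these $(k-j)$-spaces. Since $k-j\ge 3$, the plane $A$ (and hence $B$ and $P$) fits into another one. The remaining smaller $S_i$ with $3\le i<h$ go into further $(k-j)$-spaces of the partition, and there are plenty of them since $2^j\ge 8$. If the partition is only available for $r\ge k/2$, we instead partition with the larger of $j$ and $k-j$ and nest the pieces accordingly. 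This is exactly where the hypothesis $j+h\le k$ enters.
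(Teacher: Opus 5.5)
Your proposal is correct and is essentially the paper's construction. It uses the same base multiset $\chi_V+\chi_A-2\chi_B$ (Lemma~\ref{lemma_geometric_construction_1} with $r=3$) and removes $2\chi_P$ for $P\in A\setminus B$ in place of the $\varepsilon_2$-line. Case (2) uses one $(k-2)$-space disjoint from $B$, and case (3) uses the Beutelspacher partition to place the remaining $i$-spaces ($i\ge 3$) pairwise disjointly and away from $A$. The only difference is cosmetic: for $j<k/2$ you switch, somewhat vaguely, to a partition into a $(k-j)$-space and $2^{k-j}$ $j$-spaces with $S_j$ as one block, whereas the paper takes $j'=\max\{j,\lceil k/2\rceil\}$ and puts the $j$-space inside the $j'$-block; both arrangements work.
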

\begin{proof}
  We construct a corresponding multiset of points in $\PG(k-1,2)$.
  Denote the ambient space by $V$.
  \begin{itemize}
    \item[(1)] Here we have $d=2^{k-1}-2$ and a construction is given by
               Lemma~\ref{lemma_geometric_construction_1} with $r=3$.
    \item[(2)] Let $K$ be a $(k-2)$-space, $B$ a disjoint line, and $A$
               a plane that contains $B$. Let $Q$ be the point that
               equals $K\cap A$, and $P\neq Q$ be a point that is contained in $A$ but not in $B$. With this, the desired multiset of points is given by $\chi_V-\chi_K+\chi_A-2\chi_B-2\chi_P$.
    \item[(3)] If $h=2$, then choose a $j$-space $K$ and proceed as in (2).
               If $h>2$, then set $j':=\max\{j,\left\lceil k/2\right\rceil\}$, choose a $j'$-space $K'$, a $j$-space $K$ inside $K'$, and a (vector space) partition of $V$ into $K'$ and $l:=2^{j'}$ $(k-j')$-spaces $T_1,\dots,T_l$. For each $3\le i\le h$ choose an $i$-space $S_i$ inside the first $m:=h-2$ spread elements $T_r$ so that they are pairwise different. Choose a plane $A$ in $T_{m+1}$ and a
               line $B$ that is contained in $A$ as well as a point $P$ that is contained in $A$ but not in $B$. Since $h\le k-j'$ and $l\ge 2^{k/2}\ge  k/2\ge m+1$, this is always possible. With this, the desired multiset of points is given by $\chi_V-\chi_K-\sum_{i=3}^{h}\varepsilon_i \chi_{S_i}\,+\,\chi_A-2\chi_B-2\chi_P$.
  \end{itemize}
\end{proof}
We remark that the construction in Proposition~\ref{prop_construction_griesmer_plus_2_singly_even} e.g.\ yields a $[95,7,46]_2$ SO code, cf.~\cite{shi2025binary} where such a code was found by a computer search. We can construct another family of non-projective two-weight SO codes.
\begin{lemma}
  \label{lemma_non_projective_2wt_2}
  For $k\ge 4$ there exists a $\left[2^k-4,k,2^{k-1}-4\right]_2$ two-weight SO code with second weight $2^{k-1}$.
\end{lemma}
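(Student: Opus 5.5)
We construct the corresponding multiset of points directly, in the spirit of Lemma~\ref{lemma_geometric_construction_1}. Let $V$ denote $\PG(k-1,2)$, let $S\le V$ be a plane ($3$-space), and let $P\le S$ be a point. We take
$$
  \cM:=\chi_V-\chi_S+4\chi_P .
$$
Every point of $V$ outside $S$ has multiplicity $1$. The points of $S\setminus\{P\}$ have multiplicity $0$, and $P$ has multiplicity $4$, so $\cM$ is a multiset of points. Its cardinality is $\#\cM=(2^k-1)-7+4=2^k-4$. For $k\ge 4$ the points outside $S$ span $V$, since the complement of a proper subspace spans the ambient space. Hence $\cM$ is spanning and the associated code has dimension $k$.

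Next we compute $\cM(H)$ for every hyperplane $H$ by distinguishing the possible intersections of $H$ with $S$ and $P$, using $\chi_V(H)=2^{k-1}-1$:
\begin{itemize}[noitemsep]
  \item if $S\le H$, then $\cM(H)=2^{k-1}-1-7+4=2^{k-1}-4$;
  \item if $H\cap S$ is a line through $P$, then $\cM(H)=2^{k-1}-1-3+4=2^{k-1}$;
  \item if $H\cap S$ is a line not through $P$, then $\cM(H)=2^{k-1}-1-3=2^{k-1}-4$.
\end{itemize}
Via $\wt(c)=\#\cM-\cM(H)$, the codeword weights are $2^{k-1}$ and $2^{k-1}-4$. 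For $k\ge 3$ every plane contains lines through $P$ and lines avoiding $P$, and both kinds of intersection are realized by hyperplanes, so both weights actually occur. Thus we get a two-weight $\left[2^k-4,k,2^{k-1}-4\right]_2$ code with second weight $2^{k-1}$.

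For self-orthogonality, both weights are divisible by $4$ when $k\ge 4$, so the code is doubly-even. A doubly-even binary code is SO, e.g.\ by Proposition~\ref{prop_SO_characterization}.(B). Alternatively, $\cM=\chi_V-\chi_S+2\cdot(2\chi_P)$ is SO directly by Corollaries~\ref{cor_lin_comb_SO} and~\ref{cor_sum_SO_shift}, using that $\chi_V$ and $\chi_S$ are SO for $k\ge 3$.

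The main obstacle is finding the right multiset; once $\cM$ is written down, the verification is the routine case distinction above. I found it by searching for a recursive form $\chi_V-\chi_{H_0}+\cM'$ with $\cM'$ inside a hyperplane $H_0$. This reduces the problem to the same problem in one dimension lower, down to the base case $k=4$ with $\cM'=4\chi_P$ in a plane. Unrolling this recursion gives the closed form $\chi_V-\chi_S+4\chi_P$.
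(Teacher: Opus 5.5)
Your proof is correct and uses the same construction as the paper, $\cM=\chi_V-\chi_E+4\chi_P$ with a plane $E$ and a point $P\le E$, checked by counting points in the possible hyperplane intersections. You also spell out three details the paper leaves implicit: the explicit case distinction, why $\cM$ is spanning, and the step from doubly-even to self-orthogonal.
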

\begin{proof}
  Let $V$ denote the ambient space $\PG(k-1,2)$, $E$ a plane, and $P\subset E$ a point. Then counting points for the possible hyperplane intersections yields that the multiset of points $\cM=\chi_V-\chi_E+4\cdot\chi_P$ corresponds to a linear code with the stated properties.
\end{proof}
If $C$ is an $[n,k,d]_2$ SO code with maximum weight $w$, then adding an all-one codeword of even length $n'\ge n$ (that is not already present) gives a $\left[n',k+1,\min\{d,n'-w\}\right]_2$ SO code. This easy construction is especially effective when applied to a, possibly non-projective, two-weight code. An optimal example is given by the $[124,8,60]_2$ SO code listed in Appendix~\ref{sec_generator_matrices_of_SO_codes}. See e.g.\ \cite{Calderbank1986Geometry} for projective two-weight codes and \cite{Kurz2024NonProjective} for non-projective two-weight codes. In Appendix~\ref{sec_generator_matrices_of_SO_codes} a non-projective $[183,6,92]_2$ two-weight code appears as a subcode of a $[186,8,92]_2$ SO code. Instead of adding an all-one codeword a $62$-fold repetition of a $[3,3,2]_2$ simplex code is used.

\begin{lemma}
  \label{lemma_insert}(Cf.~\cite[Remark 3.2(2)]{shi2025binary})
  Let $C$ be an $[n,k,d]_2$ SO code with a codeword of weight $w$, then we
  have $\tilde{n}_2(k,\tilde{d})\le n+\tilde{n}_2(k-1,d')$, where $\tilde{d}=\min\{d+d',w\}$ and $d'$ is arbitrary.
\end{lemma}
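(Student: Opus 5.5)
Fix a codeword $c\in C$ of weight $w$ and a $[n',k-1,d']_2$ SO code $C'$ with $n'=\tilde{n}_2(k-1,d')$. Write $\cM$ for the multiset of points of $C$ in $\PG(k-1,2)$ and $H$ for the hyperplane corresponding to $c$, so that $\wt(c)=\#\cM-\cM(H)=w$. Since $C'$ has dimension $k-1$, its spanning multiset $\cM'$ lives in $\PG(k-2,2)$, and we embed it into $H\cong\PG(k-2,2)$. The candidate is $\cM+\cM'$, which has cardinality $n+n'$. In generator-matrix terms this is $(G\,|\,\tilde G')$: here $G$ is chosen with $c$ as its last row, and $\tilde G'$ is $G'$ padded with a zero last row.

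For self-orthogonality I will avoid the full hyperplane/codimension-2 parity check. Instead I argue directly with Proposition~\ref{prop_SO_characterization}(D). The first $k-1$ rows of $(G|\tilde G')$ are rows of $G$ concatenated with rows of $G'$. Each part has even weight and each part is pairwise orthogonal to the others in its block, so the concatenated rows have even weight and are pairwise orthogonal. The last row is $(c|0)$, which is even and orthogonal to everything because $c\in C\subseteq C^\perp$. Hence the new code is SO. Alternatively, this follows from Corollary~\ref{cor_sum_SO_shift}, once one notes that $\cM'$ embedded in $H$ is SO as a multiset in $\PG(k-1,2)$ via the padded generator matrix. The row argument avoids that subtlety, so I prefer it.

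For the minimum distance, take any nonzero codeword $x=(u|u')$, where $u\in C$ and $u'$ is the corresponding combination of the rows of $G'$. If the coefficient vector restricted to the first $k-1$ rows is nonzero, then $u'\neq0$ is a nonzero codeword of $C'$. In that case $\wt(x)\ge d+d'$ when $u\ne0$, and $\wt(x)\ge d'$ alone if $u=0$. I must exclude $u=0$ with $u'\ne0$: since $G$ has rank $k$, $u=0$ forces all coefficients to vanish. So $\wt(x)\ge d+d'$. If only the last coefficient is nonzero, then $x=(c|0)$ has weight $w$. Hence the minimum distance is at least $\min\{d+d',w\}=\tilde d$. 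The dimension is $k$ because $G$ already has rank $k$.

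The only delicate point is ensuring that the last row of $G$ can be taken to be $c$ while the other rows form a complement. This is immediate, since $c\neq0$ extends to a basis of $C$. With that in place, $\tilde n_2(k,\tilde d)\le n+n'$.
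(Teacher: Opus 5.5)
Your proof is correct and uses the same construction as the paper: the juxtaposition $(G\,|\,\tilde G')$ is exactly $\cM+\cM'$ with $\cM'$ embedded in the hyperplane $H$ belonging to $c$. The paper bounds hyperplane multiplicities rather than codeword weights ($\cM(K)+\cM'(K\cap H)\le (n-d)+(n'-d')$ for $K\neq H$, and $n-w+n'$ on $H$), and it leaves self-orthogonality implicit, which your row check via Proposition~\ref{prop_SO_characterization}(D) makes explicit.
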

\begin{proof}
  Let the multiset of points $\cM$ in $\PG(k-1,2)$ correspond to $C$, $\cM'$ correspond to a $[n',k-1,d']_2$ SO code with $n'=\tilde{n}_2(k-1,d')$, and $H$ be a hyperplane with
  $\cM(H)=n-w$. Then $\widetilde{\cM}:=\cM+\cM'$, where $\cM'$ is embedded in $H$, is an $\widetilde{m}$-arc with
  $
    \widetilde{m}=\max\{n-d+n'-d',n-w+n'\}=\max\{n+n'-(d+d'),(n+n')-w\}
  $.
\end{proof}
\begin{corollary}
  \label{cor_insert}
  There exist $[47,6,22]_2$, $[55,6,26]_2$, $[80,7,38]_2$, $[82,7,40]_2$, $[88,7,42]_2$, $[90,7,44]_2$, $[147,8,72]_2$, $[155,8,76]_2$, $[171,8,84]_2$, $[176,8,86]_2$, $[178,8,88]_2$, $[201,8,98]_2$, $[203,8,100]_2$, $[208,8,102]_2$, $[210,8,104]_2$, $[216,8,106]_2$, $[223,8,110]_2$, $[232,8,114]_2$, $[239,8,118]_2$, $[247,8,122]_2$, and $[259,8,126]_2$, SO codes.
\end{corollary}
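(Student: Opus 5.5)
The statement is a list of instances of Lemma~\ref{lemma_insert}, so the plan is to handle each listed parameter set $[N,k,D]_2$ separately. For each one I choose three things: a base $[n,k,d]_2$ SO code $C$ with a codeword of weight $w\ge D$; an auxiliary distance $d'$ with $d+d'=D$; and the value $\tilde{n}_2(k-1,d')$, already known from the lower-dimensional determination. The requirement is $n+\tilde{n}_2(k-1,d')=N$. Lemma~\ref{lemma_insert} then gives $\tilde{n}_2(k,D)\le N$, and that upper bound is all the corollary asserts.

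\textbf{Base codes.} The default choice is the first order Reed--Muller code $\mathcal{RM}_2(1,k-1)$, which is a $[2^{k-1},k,2^{k-2}]_2$ code. It is SO for $k-1\ge 3$ by the remark after Corollary~\ref{cor_lin_comb_SO}. It contains the all-one word, so $w=2^{k-1}$, which is far above every target distance. Two sample checks:
\begin{itemize}[noitemsep]
\item For $[47,6,22]_2$, take $C=\mathcal{RM}_2(1,5)$, which is $[32,6,16]_2$, and $d'=6$. The Griesmer SO bound gives $\tilde{g}_2(5,6)=14$. Corollary~\ref{cor_ex_non_existence_1} excludes $[14,5,6]_2$, and a $[15,5,6]_2$ SO code exists, so $\tilde{n}_2(5,6)=15$ and $32+15=47$.
\item For $[55,6,26]_2$, the same base with $d'=10$ uses $\tilde{n}_2(5,10)=23$, since $[22,5,10]_2$ is excluded.
\end{itemize}
In dimension $7$ the base is $[64,7,32]_2$ with $w=64$. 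The small values needed are $\tilde{n}_2(6,6)=16$ (since $[15,6,6]_2$ is excluded), $\tilde{n}_2(6,8)=18$, and so on. In dimension $8$ the base is $[128,8,64]_2$ with $w=128$, combined with the values $\tilde{n}_2(7,d')$ for $d'\le 22$.

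\textbf{Larger distances in dimension 8.} For $D\ge 98$ the base is instead a longer doubly-even SO code, for instance a Solomon--Stiffler type Griesmer code $\sigma\chi_V-\sum\chi_{S_j}$, which is doubly-even by Lemma~\ref{lemma_griesmer_code_implies_SO}.(1). Alternatively I would use one of the two-weight codes of Lemma~\ref{lemma_geometric_construction_1} or Lemma~\ref{lemma_non_projective_2wt_2}. For any such base I must confirm that its maximum weight $w$ still satisfies $w\ge D$. The cleanest way is to make sure the multiset contains a full complement of a hyperplane, or to compute the weights directly from the geometric description. A sensible choice is to let $d$ be the largest multiple of $4$ whose Griesmer code has length close to $N$ minus the optimal short code.

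\textbf{Main obstacle.} The main difficulty is the bookkeeping over roughly twenty cases, each needing a matching split $d+d'=D$. In particular, for the large-distance $k=8$ entries it is not clear which base code makes $n+\tilde{n}_2(7,d')$ hit $N$ exactly while keeping $w\ge D$. I would first try $\mathcal{RM}_2(1,7)$ together with the $k=7$ values established in this section. When that overshoots, I would switch to a Griesmer SO base with $d\equiv 0\pmod 4$ and small $d'\in\{2,4,6,8,10\}$, using $\tilde{n}_2(7,2)=8$-type small values, and verify the maximum weight for each case.
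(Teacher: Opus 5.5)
Your approach is the paper's: its proof applies Lemma~\ref{lemma_insert} with $C=\mathcal{RM}_2(1,k-1)$ and $w=2^{k-1}$ to every entry. However, your plan leaves the $k=8$ entries with $D\ge 98$ unresolved. It asserts that a different base is needed there, and the switch you propose would not work.

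The obstacle you anticipate does not exist. Since $w=128\ge D$ for every listed $D\le 126$, the choice $d'=D-64$ always works. The constructive side of the $k=7$ determination then gives exactly the required lengths:
\begin{itemize}[noitemsep]
\item $128+\tilde{n}_2(7,d')=128+19,\ 128+27,\ 128+43,\ 128+48,\ 128+50$ for $D=72,76,84,86,88$;
\item $128+73,\ 128+75,\ 128+80,\ 128+82,\ 128+88,\ 128+95,\ 128+104,\ 128+111,\ 128+119$ for $D=98,100,102,104,106,110,114,118,122$.
\end{itemize}
So your range $d'\le 22$ is already too small for $[178,8,88]_2$, which needs $d'=24$, and the large-$D$ entries need $d'$ up to $62$. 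Only existence results in dimension $k-1$ are used; the non-existence results you quote, such as the exclusion of $[14,5,6]_2$, are irrelevant here. There is an ordering to respect: $[208,8,102]_2$, $[210,8,104]_2$ and $[216,8,106]_2$ use the entries $[80,7,38]_2$, $[82,7,40]_2$ and $[88,7,42]_2$ of this same corollary. There is no circularity, because those rest on $k=6$ values.

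Your proposed fallback, a doubly-even Griesmer-type base with small $d'$, overshoots. Small $d'$ carry large self-orthogonality defects in dimension $7$: $\tilde{n}_2(7,2)=14$, not $8$, and $\tilde{n}_2(7,6)=18$. Moreover, the doubly-even bases with $68\le d\le 92$ are not Griesmer. For $D=98$ this gives $192+14=206$ or $186+18=204$, both above $201$.

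Finally, drop the requirement $n+\tilde{n}_2(k-1,d')=N$ and ask only for $\le N$, which is what Lemma~\ref{lemma_insert} gives; you can pad with zero coordinates. This matters for the last entry: the construction gives $128+\tilde{n}_2(7,62)=128+126=254$, not $259$. The value $254$ agrees with $\tilde{n}_2(8,126)=254$ in Theorem~\ref{main_thm}, so the printed $[259,8,126]_2$ is evidently a misprint for $[254,8,126]_2$.
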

\begin{proof}
  Apply Lemma~\ref{lemma_insert} with $\mathcal{RM}_2(1,k-1)$ as $C$ and $w=2^{k-1}$.
\end{proof}
Lemma~\ref{lemma_insert} also yields $\left[\tilde{n}_2(7,d),7,d\right]_2$ SO codes for all even $48\le d\le 64$ with $d\neq 52$. The question arises whether there exist other SO codes, with
length near the Griesmer bound and a large maximum weight, besides first-order Reed--Muller codes. The generator matrices
$$
  \left(\begin{smallmatrix}
111111000000000111111110000000011111111111111110000000001100101101000000000\\
000000111000000000000001111111111111111111111110001110000010011100100000100\\
000000000111111111111111111111100000000111111110000000000001110100010000010\\
000111000001111000011110000111100001111000011111111110000101101010001000001\\
011001011010011001100110011001100110011001100110110011111010001110000100000\\
100111111101101010100011101010101010111010001011000110111000100110000010111\\
101010001000110101001010110010110100101010110100011001011101110010000001111\\
  \end{smallmatrix}\right)
$$
and
$$
  \left(\begin{smallmatrix}
111100000000111100001111111111100000000011110000111111110010010111000000000\\
000011110000000011111111111100011100000000001111111111110001001110100000000\\
000000001111111111110000111100000011100011111111000011110000111010010000000\\
111111111111111111111111111100000000011100000000000000000011111110001000100\\
001100110011001100110011001101101111101100110011001100111111100100000100011\\
011101000111110100010100111110110101110101010101010101011001001100000010111\\
100110011011011001100101010111011010001010011001000101101101001010000001101\\
  \end{smallmatrix}\right)
$$
yield optimal self-orthogonal $[75,7,36]_2$ codes with weight distributions $36^{75} 40^{51}   60^1$ and $36^{76} 40^{50} 64^1$. The orders of their automorphism groups are given by $1440$ and $96$, respectively. However, using them as code $C$ in Lemma~\ref{lemma_insert} does not yield any optimal codes.
\todo{Study possible geometric explanations. Do they come from e.g.\ $8$-divisible $[n,k,40]_2$ codes? Comment on the (unique) large maximum weight and how it might be eventually used in the construction in Lemma~\ref{lemma_insert}, while it does not yield any very good codes in this special situation. They are also codes that can serve as examples for SO codes found by prescribing automorphism groups. The example in \texttt{BKLC}, referred to in \cite{shi2025binary}, has a trivial? or small automorphism group.}
\begin{lemma}
  \label{lemma_no_10_5_4_SO}
  No $[10,5,4]_2$ SO code exists.
\end{lemma}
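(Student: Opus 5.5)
We argue by contradiction and aim to pin down the weight distribution of a putative $[10,5,4]_2$ SO code $C$ exactly, then show that the fourth MacWilliams identity is violated. As a preliminary step we reduce to full length. Deleting zero coordinates preserves self-orthogonality, so we may replace $C$ by its restriction to $\supp(C)$, of length $n'\le 10$. Since $g_2(5,4)=9$, we have $n'\in\{9,10\}$. For $n'=9$ the code would be a Griesmer code, hence $4$-divisible by Theorem~\ref{thm_griesmer_divisible}. This case can be treated with the same counting as below (or by quoting $n_2(5,4)=10$ from \cite{grassl_codetables}), so the main case is a full-length code with $n=10$.

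First we restrict the possible weights. Since $C$ is even, the possible non-zero weights are $4,6,8,10$. A codeword of weight $w=6$ satisfies $w<2d=8$ and $d-\tfrac{w}{2}=1\equiv 1\pmod 2$. Hence Lemma~\ref{lemma_residual_SO_code_bound} would give a $[4,4,2]_2$ residual code, which is impossible, so $A_6=0$. Next we use Proposition~\ref{prop_SO_characterization}.(B): if some codeword were singly-even, then exactly $2^{4}=16$ codewords would be singly-even. With $A_6=0$ these would all have weight $10$, i.e.\ they would all equal the all-one word, which is absurd. Therefore $C$ is doubly-even with weights in $\{4,8\}$.

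Next we solve the first MacWilliams identities for $A_4$ and $A_8$. Equations~(\ref{eq_mw1}) and (\ref{eq_mw2}) give $A_4+A_8=31$ and $4A_4+8A_8=16\cdot 10$, so $A_4=22$ and $A_8=9$. Equation~(\ref{eq_mw3}) then gives
\[
16\cdot 22+64\cdot 9=928=16\,(B_2+55),
\]
so $B_2=3$.

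Finally, Equation~(\ref{eq_mw4}) reads
\[
64\cdot 22+512\cdot 9=6016=8\left(3(30-B_3)+650\right).
\]
This yields $3(30-B_3)=102$, i.e.\ $B_3=-4<0$, a contradiction. The only delicate point is the reduction step, namely excluding codewords of weight $6$ and $10$; once the code is known to be doubly-even, the linear algebra is routine. In the degenerate case $n'=9$ one redoes the same computation with $n=9$ (weights $4$ and $8$), and we expect an analogous sign violation or a non-integral $A_i$.
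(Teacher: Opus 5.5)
Your proof is correct, but it takes a different route from the paper.

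The paper's argument is purely combinatorial. It takes a systematic generator matrix $(I_5|G')$. Evenness and $d=4$ force the rows of $G'$ to have weight $3$ or $5$. Orthogonality then rules out weight $5$ and forces any two rows of $G'$ to share exactly two $1$s. An elementary count shows that at most four $3$-subsets of a $5$-set can pairwise meet in exactly two elements, but five rows are needed.

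You instead use the weight-restriction-plus-MacWilliams template that the paper itself applies in Lemma~\ref{lemma_no_31_6_14_SO} and Lemma~\ref{lemma_no_SO_code_via_weight_restrictions}:
\begin{itemize}
  \item Lemma~\ref{lemma_residual_SO_code_bound} excludes weight $6$.
  \item Proposition~\ref{prop_SO_characterization}.(B) then excludes weight $10$, since $16$ singly-even codewords cannot all equal the all-one word.
  \item The first four identities give $A_4=22$, $A_8=9$, $B_2=3$ and $B_3=-4$. I checked this arithmetic.
\end{itemize}
The paper's argument is self-contained and needs no weight-enumerator machinery. Yours is more mechanical and transfers to other parameters, but it leans on several earlier results.

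Two simplifications are worth noting.
\begin{itemize}
  \item Your case $n'=9$ is vacuous. Restricting to the support keeps the code SO of dimension $5$, and $C\subseteq C^\perp$ forces $n'\ge 2k=10$. So the unverified ``we expect'' is unnecessary; for the record, redoing the computation with $n=9$ does give $A_4=26$, $A_8=5$, $B_2=1$ and $B_3=-5$.
  \item For $n=10=2k$ the code is self-dual, so $B_i=A_i$ for all $i$. You therefore already have a contradiction at $B_2=3\neq 0=A_2$, without needing the fourth identity.
\end{itemize}
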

\begin{proof}
  Let $\left(I_5|G'\right)$ be a generator matrix of such a code $C$, where $I_5$ denotes the $5\times 5$ unit matrix.
  Since $C$ is even and has minimum distance $4$ the weights of the rows of $G'$ are contained in $\{3,5\}$. Orthogonality rules out weight $5$ and
  implies that each pair of different rows has exactly two $1$s in common. There cannot be three rows of $G'$ with the same two $1$s in common, since choosing a fourth row would be impossible. However, for one row there are just $3$ ways to choose two $1$s, so that there are at most four rows.
\end{proof}
Note that there are numerous ways to show this result, e.g.\ using results for self-dual codes. We remark that $[10,5,4]_2$ with weight distributions
$4^{10}   5^{16}   8^5$ and $4^{15}   6^{15}   10^1$ do exist. However, the only doubly-even $[\le 10,4]_2$ codes have weight distributions $4^{14}   8^1$ and $4^{10}   8^5$, cf.~\cite{KiermaierKurz2023}.

\begin{lemma}
  We have $\tilde{n}_2(k,2)=2k$ for all $k\ge 1$
\end{lemma}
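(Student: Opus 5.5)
Two matching inequalities are needed.

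For the upper bound we exhibit the direct sum of $k$ copies of the $[2,1,2]_2$ repetition code. Its generator matrix is $I_k\otimes(1\;1)$. Every row has weight $2$, and distinct rows have disjoint supports, so they are pairwise orthogonal. By Proposition~\ref{prop_SO_characterization}.(D), or by the case $k=1$ noted just before it, this code is SO. Every non-zero codeword has weight $2$ times its number of non-zero blocks, so the minimum distance is $2$. This gives $\tilde{n}_2(k,2)\le 2k$. Geometrically, the code is the multiset $2\sum_{i=1}^k\chi_{P_i}$ for $k$ points spanning $V$, which is SO by Corollary~\ref{cor_lin_comb_SO}.

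For the lower bound, let $C$ be an $[n,k]_2$ SO code with $k\ge1$. Since $C\subseteq C^\perp$, we get $k\le n-k$, so $n\ge 2k$. This already gives $\tilde{n}_2(k,2)\ge 2k$.

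A second route, in the spirit of the paper, uses the residual SO argument. Induct on $k$. The case $k=1$ holds because an even nonzero code has length at least $2$. For $k\ge2$, take a codeword $c$ of weight $w\ge2$ and pass to the residual code $\Res(C;c)$, which is even by Proposition~\ref{prop_SO_characterization}.(C). The only care needed is to keep the dimension at $k-1$; Lemma~\ref{lemma_residual_code_bound} guarantees this only when $w<2d=4$. One would therefore choose $c$ of weight $2$ if one exists. Otherwise $C$ is doubly-even, and the counting argument above is simpler anyway.

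The main obstacle is therefore negligible. The dimension bound $\dim C\le\dim C^\perp$ settles the lower bound at once, and the construction is immediate.
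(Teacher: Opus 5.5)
Your proof is correct and matches the paper's: the lower bound $n\ge 2k$ follows from $C\subseteq C^\perp$, and the construction doubles the points of a unit matrix, which is exactly your $I_k\otimes(1\;1)$. The optional residual-code aside is unnecessary and should be dropped. As written it is not sound: having no weight-$2$ codewords does not make $C$ doubly-even, and knowing only that the residual code is even would not support an induction over SO codes.
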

\begin{proof}
  The lower bound is given by $n\ge 2k$ and a construction is given by doubling the multiset of points given by a unit matrix.
\end{proof}

\begin{proposition}
  For even $d\ge 2$ we have
  $$
    \tilde{n}_2(3,d)=\left\{\begin{array}{rcl}
    \tilde{g}_2(3,d)+1 &:& d=2,\\
    \tilde{g}_2(3,d) &:& \text{otherwise}.
    \end{array}\right.
  $$
\end{proposition}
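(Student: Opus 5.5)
Lower bounds come from the Griesmer SO bound~(\ref{ie_Griesmer_SO}); upper bounds come from explicit SO multisets of points in $\PG(2,2)$. It therefore suffices to handle the single exceptional value $d=2$ and to give constructions attaining $\tilde{g}_2(3,d)$ for every even $d\ge 4$.

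For $d=2$ we have $\tilde{g}_2(3,2)=g_2(3,2)+1=2+1+1+1=5$. The preceding lemma gives $\tilde{n}_2(3,2)=6$, which is $\tilde{g}_2(3,2)+1$. Alternatively, a $[5,3,2]_2$ SO code cannot exist, because SO codes are even and an even binary code must contain the all-one vector in its dual; more simply, an SO code of length $5$ has dimension at most $2$.

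For $d\ge4$, the plan is to write $d=4m$ or $d=4m+2$. Since $d'=2\lceil d/2\rceil=d$, we need codes of length $g_2(3,d)$ when $d\equiv0\pmod4$ and of length $g_2(3,d)+1$ when $d\equiv 2\pmod 4$. By Lemma~\ref{lemma_griesmer_observations}(2), $g_2(3,d+4)=g_2(3,d)+7$. Adding $\chi_V$, which is SO since $k=3$, raises the length by $7$ and the minimum distance by $4$. So it suffices to give base cases, after which Corollary~\ref{cor_lin_comb_SO} handles all larger $d$.

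The residues modulo $8$ to cover are $d\in\{4,6,8,10\}$, together with possibly $d=2+8$ separately.
\begin{itemize}[noitemsep]
\item $d=4$: take $\chi_V-\chi_L$ for a line $L$. This is $\mathcal{RM}_2(1,2)$, a $[4,3,2]$ code, which is not enough. Instead use $\chi_V$, the $[7,3,4]$ simplex code; it has length $g_2(3,4)=4+2+1=7$ and is SO.
\item $d=6$: $\tilde{g}_2(3,6)=6+3+2+1=12$. Take $2\chi_V-2\chi_P$. By Corollary~\ref{cor_lin_comb_SO} it is SO, it has cardinality $12$, and a hyperplane meets it in at most $6$ points, so $d\ge 6$.
\item $d=8$: $2\chi_V$ gives a $[14,3,8]$ code.
\item $d=10$: $\tilde{g}_2(3,10)=10+5+3+1=19$. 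Take $3\chi_V-2\chi_P$, which is SO by Corollary~\ref{cor_lin_comb_SO} (via $\chi_V$ plus $2(\chi_V-\chi_P)$). It has $21-2=19$ points, and the maximum hyperplane count is $9$, so $d=10$.
\end{itemize}
Adding multiples of $\chi_V$ then gives every even $d\ge4$: $d=4m$ comes from $m\chi_V$, and $d=4m+2$ with $m\ge1$ comes from $(m+1)\chi_V-2\chi_P$.

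We also need $g_2(3,4m+2)+1$ to match, i.e.\ $7(m+1)-2=g_2(3,4m+2)+1$. This follows from Lemma~\ref{lemma_griesmer_observations}(4): $g_2(3,4m+4)=g_2(3,4m+2)+3$.

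The main obstacle is just this bookkeeping of $\tilde{g}_2$ against the explicit cardinalities, together with checking spanning, which is clear since every point has positive multiplicity when $m\ge1$.
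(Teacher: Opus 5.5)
Your proof is correct and is essentially the paper's argument made explicit: $m\chi_V$ is the $[7m,3,4m]_2$ Griesmer code used in Lemma~\ref{lemma_griesmer_code_implies_SO}(1), and $(m+1)\chi_V-2\chi_P$ is exactly the construction of Lemma~\ref{lemma_griesmer_code_implies_SO}(2), which removes a double point from the $[7(m+1),3,4(m+1)]_2$ Griesmer code. You check self-orthogonality via Corollary~\ref{cor_lin_comb_SO} rather than the divisibility theorem, and handle $d=2$ by the lemma $\tilde{n}_2(k,2)=2k$. One small slip: for $m=1$ the point $P$ has multiplicity $0$ in $2\chi_V-2\chi_P$, so spanning should be justified by the remaining six points spanning $\PG(2,2)$, not by ``every point has positive multiplicity.''
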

\begin{proof}
  Assume $d\ge 4$, so that all lower bounds are given by the Griesmer SO bound.
  For doubly-even $d\ge 4$ and for singly-even $d\ge 6$ there exist Griesmer codes with minimum distance $d+2$, so that we can apply Lemma~\ref{lemma_griesmer_code_implies_SO}.
\end{proof}
Note that it suffices to provide constructions for $d\le 2+2^2=6$.
For even $d\ge 2$ we have $n_2(3,d)=g_2(3,d)$.

\begin{proposition}
  For even $d\ge 2$ we have
  $$
    \tilde{n}_2(4,d)=\left\{\begin{array}{rcl}
    \tilde{g}_2(4,d)+2 &:& d=2,\\
    \tilde{g}_2(4,d)+1 &:& d=6,\\
    \tilde{g}_2(4,d) &:& \text{otherwise}.
    \end{array}\right.
  $$
\end{proposition}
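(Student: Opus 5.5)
Following the pattern of the $k=3$ case, the proof splits into lower bounds for the exceptional values $d=2$ and $d=6$ and constructions for everything else.

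\textbf{Exceptional values.} For $d=2$ we use the lemma giving $\tilde{n}_2(k,2)=2k$, hence $\tilde{n}_2(4,2)=8$. On the other side, $g_2(4,2)=2+1+1+1=5$ with $d'=2\equiv 2\pmod 4$, so $\tilde{g}_2(4,2)=6$ and $8=\tilde{g}_2(4,2)+2$.

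For $d=6$ we compute $g_2(4,6)=6+3+2+1=12$, so $\tilde{g}_2(4,6)=13$. Lemma~\ref{lemma_non_existence_summary}.(2) with $k=4$, $t=3$ gives $\tilde{n}_2(4,6)\ge 2^4+4-3-3=14$. Alternatively, Corollary~\ref{cor_ex_non_existence_1} already excludes a $[13,4,6]_2$ SO code. For the matching construction we take $\chi_V-\chi_P$ for a point $P$ in $V=\PG(3,2)$, a $[14,4,7]_2$ code, and look for an even variant. A cleaner choice is $\chi_V+\chi_A-2\chi_B$ from Lemma~\ref{lemma_geometric_construction_1} with $r=3$. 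This is a $4$-divisible $[16,4,8]_2$ code, hence SO, and it has points of multiplicity $2$ (points of $A\setminus B$). Removing such a point twice via Lemma~\ref{lemma_minus_double_point} would need multiplicity $3$, so instead we subtract $2\chi_P$ directly, using Corollary~\ref{cor_lin_comb_SO} (the difference of SO multisets is SO, and $2\chi_P$ is SO). This yields a $[14,4,6]_2$ SO code, provided the result stays spanning, which it does.

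\textbf{Generic values.} For all other even $d$ the lower bound is the Griesmer SO bound~(\ref{ie_Griesmer_SO}), so only constructions are needed. The plan is:
\begin{itemize}
\item For $d\equiv 0\pmod 4$, use Lemma~\ref{lemma_griesmer_code_implies_SO}.(1) together with the existence of binary Griesmer codes for $k=4$. These exist for every $d$ except $d\in\{3,5,6,7\}$, a known fact recoverable from Solomon--Stiffler/Belov type constructions. So $d=4$ (the $[8,4,4]_2$ extended Hamming code) and $d\ge 8$ are covered.
\item For $d\equiv 2\pmod 4$ with $d>8$, use Lemma~\ref{lemma_griesmer_code_implies_SO}.(2) with the Griesmer code of minimum distance $d+2$; this gives length $g_2(4,d)+1=\tilde{g}_2(4,d)$.
\end{itemize}
By Lemma~\ref{lemma_griesmer_observations}.(2), adding copies of $\chi_V$ (which is SO for $k\ge 3$) shifts $d$ by $8$, so it suffices to check $d\le 2+2^3=10$.

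\textbf{Main obstacle.} The only remaining singly-even case is $d=10$, where $d>2^{k-1}=8$ holds. A $[g_2(4,12),4,12]_2=[12+6+3+2,4,12]_2=[23,4,12]_2$ Griesmer code exists (Solomon--Stiffler type $\chi_V+\chi_V$ minus a plane and a point, suitably chosen). It has a point of multiplicity $2$, and removing that point twice gives a $[21,4,10]_2$ SO code. Since $g_2(4,10)=10+5+3+2=20$, this has length $\tilde{g}_2(4,10)=21$. The main care is verifying spanning and the multiplicity condition in these small cases.
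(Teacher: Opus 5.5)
Your argument is correct and follows essentially the paper's route. For $d=2$ you use $\tilde n_2(k,2)=2k$. For all doubly-even $d$ and all singly-even $d\ge 10$ you use the Griesmer SO bound together with Lemma~\ref{lemma_griesmer_code_implies_SO}. For $d=6$ you exclude a $[13,4,6]_2$ SO code, via Lemma~\ref{lemma_non_existence_summary}.(2) where the paper cites Corollary~\ref{cor_ex_non_existence_1}; both are valid. You then give an explicit $[14,4,6]_2$ SO code.

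The only real difference is the $d=6$ construction. The paper simply doubles the $[7,4,3]_2$ Hamming code, which is SO by Corollary~\ref{cor_lin_comb_SO}. Your $\chi_V+\chi_A-2\chi_B-2\chi_P$ also works: it is spanning, SO by Corollary~\ref{cor_lin_comb_SO}, and has weights in $\{6,8,10\}$. It just needs more bookkeeping.

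Two of your side remarks are wrong, though neither is used in the argument:
\begin{itemize}
\item Binary $4$-dimensional Griesmer codes exist for \emph{every} $d$, e.g.\ $[7,4,3]_2$, $[11,4,5]_2$, $[12,4,6]_2$, $[14,4,7]_2$. So there is no exceptional set $\{3,5,6,7\}$.
\item The $[23,4,12]_2$ Griesmer code is $2\chi_V-\chi_E$ for a plane $E$. Removing an additional point would leave only $22$ points.
\end{itemize}

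Finally, $d=10$ is not an obstacle. Since $10>2^3$, it is already covered by Lemma~\ref{lemma_griesmer_code_implies_SO}.(2), which you had invoked for all singly-even $d>8$. Both the $\chi_V$-shift reduction and the separate treatment of $d=10$ are therefore redundant.
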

\begin{proof}
  Assume $d\ge 4$.
  Besides the Griesmer SO bound, the lower bounds are given by Corollary~\ref{cor_ex_non_existence_1} for $d=6$.
  Doubling a $[7,4,3]_2$ code yields a construction for $d=6$.
  For doubly-even $d\ge 4$ and for singly-even $d\ge 10$ there exist Griesmer codes, so that we can apply Lemma~\ref{lemma_griesmer_code_implies_SO}.
\end{proof}
Note that it suffices to provide constructions for $d\le 6+2^3=14$.
For even $d\ge 2$ we have $n_2(4,d)=g_2(4,d)$.

\begin{proposition}
  For even $d\ge 2$ we have
  $$
    \tilde{n}_2(5,d)=\left\{\begin{array}{rcl}
    \tilde{g}_2(5,d)+3 &:& d=2,\\
    \tilde{g}_2(5,d)+2 &:& d=4,\\
    \tilde{g}_2(5,d)+1 &:& d\in\{6,10,14\},\\
    \tilde{g}_2(5,d) &:& \text{otherwise}.
    \end{array}\right.
  $$
\end{proposition}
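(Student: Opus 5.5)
The lower bounds come from the Griesmer SO bound together with the non-existence results already established. The upper bounds come from the constructions of this section. By Lemma~\ref{lemma_griesmer_observations}.(2), adding $r$ copies of $\chi_V$ preserves self-orthogonality and Griesmer-type length, so I only need to treat $d\le 30$ (or $d\le 14+16$). I would go through the even values of $d$ in this range one by one, after computing $g_2(5,d)$ and $\tilde{g}_2(5,d)$ for each.

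\emph{Lower bounds.} For $d=2$, $\tilde{g}_2(5,2)=g_2(5,2)+1=7$, and the lemma $\tilde{n}_2(k,2)=2k$ gives the value $10=\tilde{g}_2(5,2)+3$. For $d=4$, $\tilde{g}_2(5,4)=g_2(5,4)=9$. A $[9,5,4]_2$ SO code would be a Griesmer code, and Lemma~\ref{lemma_no_10_5_4_SO} excludes length $10$. Since adding a zero column preserves self-orthogonality, the non-existence at length $10$ also excludes length $9$, so $\tilde{n}_2(5,4)\ge 11$. For $d\in\{6,10,14\}$ the values $\tilde{g}_2(5,d)$ are $14,22,29$, and Corollary~\ref{cor_ex_non_existence_1} excludes $[14,5,6]_2$, $[22,5,10]_2$ and $[29,5,14]_2$ SO codes. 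For all other $d$ the Griesmer SO bound itself is the claimed value.

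\emph{Constructions for small exceptional $d$.} A $[10,5,2]_2$ SO code is given by the doubled unit matrix. For $d=4$ I would shorten a self-dual $[12,6,4]_2$ code at one coordinate. The result is an $[11,5,4]_2$ subcode of an SO code, hence SO. For $d=14$ we have $14=2^4-2$, so $j=2$, and Proposition~\ref{prop_construction_griesmer_plus_2_singly_even}.(1) gives length $g_2(5,14)+2=30$. For $d=10=2^4-2^2-2$ we have $j=3=k-2$, and Proposition~\ref{prop_construction_griesmer_plus_2_singly_even}.(2) gives length $g_2(5,10)+2=23$. The case $d=6$ has $j=4=k-1$, which the proposition does not cover. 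Here I need a $[15,5,6]_2$ SO code. I would try the geometric approach: a sum of SO multisets such as planes and $4$-spaces, suitably corrected by $\pm2\chi_P$ via Corollary~\ref{cor_lin_comb_SO}. Alternatively I would shorten a suitable self-dual code of length $16$ with minimum distance at least $6$, if one exists in that form, or else give an explicit generator matrix.

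\emph{Remaining $d$.} For doubly-even $d\ge 8$, Solomon--Stiffler multisets give Griesmer codes. Examples are $\chi_V-\chi_{S_4}$ for $d=8$ (equivalent to $\mathcal{RM}_2(1,4)$), $\chi_V-\chi_{S_3}$ for $d=12$, $\chi_V$ for $d=16$, and $2\chi_V-\chi_{S_4}-\chi_{S_3}$ for $d=20$, and similarly for larger $d$. For each doubly-even $d$ I would check that the subtracted subspaces can be chosen so that the multiplicities stay non-negative, and then Lemma~\ref{lemma_griesmer_code_implies_SO}.(1) makes these codes SO. For singly-even $d>16$, Lemma~\ref{lemma_griesmer_code_implies_SO}.(2) applied to the Griesmer code of distance $d+2$ gives length $g_2(5,d)+1=\tilde{g}_2(5,d)$. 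The main obstacle I expect is the $d=6$ construction, since none of the general lemmas apply there.
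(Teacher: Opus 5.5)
\textbf{Gap at $d=6$.} Your lower bounds are fine and match the paper. So is your treatment of $d=2$, of doubly-even $d\ge 8$ (Solomon--Stiffler plus Lemma~\ref{lemma_griesmer_code_implies_SO}.(1)) and of singly-even $d\ge 18$ (Lemma~\ref{lemma_griesmer_code_implies_SO}.(2)). For $d\in\{4,10,14\}$ you go further than the paper, which only cites \cite{BouyuklievOstergaard2006Binary}. Shortening the self-dual $[12,6,4]_2$ code gives an $[11,5,4]_2$ SO code. Proposition~\ref{prop_construction_griesmer_plus_2_singly_even}.(2) and (1) apply correctly and give $[23,5,10]_2$ and $[30,5,14]_2$ SO codes. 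However, the upper bound $\tilde{n}_2(5,6)\le 15$ is not established. You rightly observe that $j=k-1$ is not covered by the general construction. Neither fallback closes the gap:
\begin{itemize}
\item ``planes and $4$-spaces corrected by $\pm 2\chi_P$'' is not an actual construction;
\item the shortening route cannot work, because binary self-dual codes of length $16$ have minimum distance at most $4$. In fact no $[16,8,6]_2$ code exists at all, since $n_2(8,6)=17$.
\end{itemize}

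\textbf{How to close it.} A $[15,5,6]_2$ SO code is easy to write down. Take the columns $(x,f(x))^\top$ for $x\in\F_2^4\setminus\{0\}$, with the bent function $f(x)=x_1x_2+x_3x_4$. Geometrically these are the $15$ points of a parabolic quadric $Q(4,2)$ in $\PG(4,2)$, whose hyperplane sections have $9$, $7$ or $5$ points.
\begin{itemize}
\item The codewords $a\cdot x$ form the simplex code $\mathcal{S}_2(4)$, all of weight $8$.
\item Each codeword $a\cdot x+f(x)$ has weight $8-\tfrac12 W_f(a)\in\{6,10\}$, since $f$ is bent and $f(0)=0$.
\item This gives weight distribution $6^{10}8^{15}10^{6}$.
\end{itemize}
The code is even and its doubly-even codewords form a $4$-dimensional subcode, so it is SO by Proposition~\ref{prop_SO_characterization}.(B). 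With this code, or any other explicit $[15,5,6]_2$ SO code, your proof is complete.
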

\begin{proof}
  Assume $d\ge 4$.
  Besides the Griesmer SO bound, the lower bounds are given by Lemma~\ref{lemma_no_10_5_4_SO} for $d=4$ and Corollary~\ref{cor_ex_non_existence_1} for $d\in\{6,10,14\}$.
  Constructions for $d\in\{4,6,10,14\}$ can be found in \cite{BouyuklievOstergaard2006Binary}. For doubly-even $d\ge 8$ and for singly-even $d\ge 18$ there exist Griesmer
  codes, so that we can apply Lemma~\ref{lemma_griesmer_code_implies_SO}.
\end{proof}
Note that it suffices to provide constructions for $d\le 14+2^4=30$.
For even $d\ge 2$ we have $n_2(5,d)=g_2(5,d)+1$ for $d\in\{4,6\}$ and $n_2(5,d)=g_2(5,d)$ otherwise.

\begin{lemma}
  \label{lemma_no_31_6_14_SO}
  No $[31,6,14]_2$ SO code exists.
\end{lemma}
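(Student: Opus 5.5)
We argue by contradiction: suppose $C$ is a $[31,6,14]_2$ SO code. The plan is to pin down the weight distribution of $C$ completely using residual code arguments and Proposition~\ref{prop_SO_characterization}.(B), and then obtain a contradiction from the second MacWilliams identity~(\ref{eq_mw2}).

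\textbf{Step 1 (minimum weight is attained).} We have $g_2(6,16)=16+8+4+2+1+1=32>31$. Hence $C$ cannot have minimum distance $16$, so it contains a codeword of weight $14$.

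\textbf{Step 2 (excluding weights).} Since $C$ is even, all weights lie in $\{14,16,\dots,30\}$. We exclude several of them.
\begin{itemize}[noitemsep]
\item Weight $30=n-1$: by Lemma~\ref{lemma_residual_SO_large_weight}, after first reducing to the case that $C$ has full length (see Step 4).
\item Weights $w\in\{18,22,26\}$: here $w<2d=28$ and $14-\tfrac{w}{2}$ is odd, so Lemma~\ref{lemma_residual_SO_code_bound} gives residual codes with parameters $[13,5,6]_2$, $[9,5,4]_2$ and $[5,5,2]_2$, respectively.
\item These residual codes do not exist: $n_2(5,6)=g_2(5,6)+1=14$, $n_2(5,4)=g_2(5,4)+1=10$, and $n_2(5,2)=6$.
\end{itemize}
Thus the only singly-even weight that can occur is $14$, and the doubly-even weights lie in $\{16,20,24,28\}$.

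\textbf{Step 3 (weight distribution).} By Proposition~\ref{prop_SO_characterization}.(B), the doubly-even codewords form a subcode $D$ of dimension $5$ or $6$. Since a weight-$14$ codeword exists by Step~1, $\dim D=5$, and so $A_{14}=2^6-2^5=32$.

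The subcode $D$ is a $[31,5,16]_2$ code. Since $g_2(5,16)=16+8+4+2+1=31$, it is a Griesmer code. By Theorem~\ref{thm_griesmer_divisible}, $D$ is $16$-divisible, so all its non-zero weights equal $16$ (the next multiple, $32$, exceeds the length). Hence $A_{16}=31$ and all other $A_i$ vanish for $i>0$.

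\textbf{Step 4 (contradiction).} Let $n'\le 31$ be the effective length of $C$. Identity~(\ref{eq_mw2}), applied to $C$ restricted to its support, gives
$$2^5 n'=\sum_i iA_i=14\cdot 32+16\cdot 31=944.$$
This forces $n'=29.5$, which is not an integer, a contradiction.

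The effective length does not enter Steps 2 and 3 except through the exclusion of weight $30$. That exclusion is only needed when $n'=31$, and if $n'<31$ then weight $30$ is already impossible because it would exceed $n'-1$. So the argument is consistent.

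The main obstacle is Step 2: one must check carefully that the residual SO bound really excludes every singly-even weight other than $14$. This relies on the exact values of $n_2(5,\cdot)$ quoted above. Once that is in place, the Griesmer divisibility of $D$ makes the rest routine.
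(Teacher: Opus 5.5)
Your argument is the paper's argument. The paper also:
\begin{itemize}[noitemsep]
\item excludes the singly-even weights $18,22,26$ via Lemma~\ref{lemma_residual_SO_code_bound} and weight $30$ via Lemma~\ref{lemma_residual_SO_large_weight};
\item shows that the doubly-even subcode is a $[31,5,16]_2$ code whose non-zero weights are all $16$;
\item finishes with the MacWilliams identities.
\end{itemize}
Your Steps 1--3 are correct. For a full-length code the count $14\cdot 32+16\cdot 31=944\neq 992=2^5\cdot 31$ is the right contradiction.

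The one defect is your closing paragraph on the effective length. For $n'=30$, a codeword of weight $30$ is \emph{not} impossible: it would have full support on the effective coordinates, and nothing you cite excludes that. The case also survives your identity. From Step~3 you get $A_{16}=31$ and $A_{14}+A_{30}=32$. Taking $A_{30}=1$ and $A_{14}=31$ gives $\sum_i iA_i=960=2^5\cdot 30$. This is exactly the weight distribution $14^{31}16^{31}30^1$ of the $[30,6,14]_2$ code mentioned right after the lemma. So as written, your argument does not close the case $n'=30$.

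The repair uses what you already have in Step~3:
\begin{itemize}[noitemsep]
\item $D$ is a $[31,5,16]_2$ Griesmer code, so it has no identically zero coordinate. Puncturing such a coordinate would give a $[30,5,16]_2$ code, violating $g_2(5,16)=31$.
\item Since $D\subseteq C$, the code $C$ has full length, so $n'=31$.
\item Weight $30=n-1$ is then excluded by Lemma~\ref{lemma_residual_SO_large_weight}, and $944\neq 992$ gives the contradiction.
\end{itemize}
Alternatively, full length follows from Lemma~\ref{lemma_non_existence_summary}.(2) with $t=4$, which gives $\tilde{n}_2(6,14)\ge 31$; this is how the paper's proof begins.
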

\begin{proof}
  Assume that $C$ is an $[31,6,14]_2$ SO code and $\cM$ its corresponding multiset of points. From Lemma~\ref{lemma_non_existence_summary}.(2) we conclude that $C$ is of full length. Since projection through a point for $\cM$ corresponds to taking a subcode in $C$ and $\tilde{n}_2(5,14)=30$, we have $\cM(P)\in\{0,1\}$ for all points $P$. The doubly-even weight subcode of $C$ is a $[31,5,16]_2$ code, i.e.\ a $5$-dimensional simplex code. Thus, the non-zero doubly-even codewords have weight $16$ and $C$ is of full length. Using the residual SO code argument in Lemma~\ref{lemma_residual_SO_code_bound} and Lemma~\ref{lemma_residual_SO_large_weight} we conclude that the only possible singly-even weight is $14$. Using the MacWilliams equations we
  obtain a contradiction.
\end{proof}
There exists an $[30,6,14]_2$ code with weight distribution $14^{31} 16^{31} 30^1$.

\begin{proposition}
  For even $d\ge 2$ we have
  $$
    \tilde{n}_2(6,d)=\left\{\begin{array}{rcl}
    \tilde{g}_2(6,d)+4 &:& d=2,\\
    \tilde{g}_2(6,d)+2 &:& d\in\{4,14\},\\
    \tilde{g}_2(6,d)+1 &:& d\in\{6,8,10,12,18,20,22,26,30\},\\
    \tilde{g}_2(6,d) &:& \text{otherwise}.
    \end{array}\right.
  $$
\end{proposition}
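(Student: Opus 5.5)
The proof will follow the pattern of the proofs for $k\le 5$: we separate lower bounds from constructions, and the periodicity of the Griesmer SO bound reduces everything to a finite range of $d$. Assume $d\ge 4$; the case $d=2$ is covered by $\tilde{n}_2(k,2)=2k$. For doubly-even $d$ with $d\ge 16$, and for singly-even $d$ with $d> 2^5=32$, Griesmer codes with parameters $[g_2(6,d),6,d]_2$ resp.\ $[g_2(6,d+2),6,d+2]_2$ exist. This follows from the Solomon--Stiffler construction with $\sigma\ge\sum\varepsilon_i$, which holds once $\sigma\ge 2$, together with explicitly known Griesmer codes for doubly-even $d$ in $\{16,\dots,32\}$. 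Lemma~\ref{lemma_griesmer_code_implies_SO}.(1),(2) then gives $\tilde{n}_2(6,d)=\tilde{g}_2(6,d)$ in this range. By Lemma~\ref{lemma_griesmer_observations}.(2), it then suffices to treat even $d\le 30+2^5$ explicitly. In practice one only has to check the listed exceptional values and the small "otherwise" cases, namely $d\in\{16,24,28,32\}$ and the singly-even $d\ge 34$ near the threshold.

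For the lower bounds, the value $d=14$ (excess $2$) follows from Lemma~\ref{lemma_no_31_6_14_SO} together with $\tilde{g}_2(6,14)=29$ and Lemma~\ref{lemma_non_existence_summary}.(2) with $t=4$, which gives $\ge 30$. Then $[30,6,14]_2$ is excluded in the same way, since the length $31$ exclusion shows the chain. More carefully, Lemma~\ref{lemma_non_existence_summary}.(2) gives exactly $30=\tilde{g}+1$, and Lemma~\ref{lemma_no_31_6_14_SO} pushes this to $32$. I will double-check which of these is the binding one.

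For $d\in\{6,10,18,22,26,30\}$ the excess of $1$ comes from Corollary~\ref{cor_ex_non_existence_1}, i.e.\ Lemma~\ref{lemma_non_existence_summary}.(1) with known values of $n_2(5,d+2)$. For $d=30$ it also comes from Lemma~\ref{lemma_non_existence_summary}.(3). The doubly-even exceptions $d\in\{4,8,12,20\}$ are not covered by SO-specific arguments. Here I expect $n_2(6,d)>g_2(6,d)$ from the known table of $n_2(6,\cdot)$ (e.g.\ no $[14,6,4]$... rather $n_2(6,4)=g+1$, $n_2(6,8)$, $n_2(6,12)$, $n_2(6,20)=g+1$), giving the coding lower bound. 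For $d=4$ the excess of $2$ needs an extra argument. My plan is to take a minimum weight word of a putative $[13,6,4]_2$ SO code and use the doubly-even subcode via Proposition~\ref{prop_SO_characterization}.(B): a $[\le13,5,\ge4]$ doubly-even subcode, or a residual even $[9,5,\ge 2]$... The cleanest route is probably to reduce via projection to Lemma~\ref{lemma_no_10_5_4_SO}. I expect $d=4$ (and the double-excess at $d=14$) to be the main obstacle.

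The upper bounds come from explicit constructions. These are Lemma~\ref{lemma_geometric_construction_1} giving $[62,6,30]$ and $[64,6,32]$-type codes, Proposition~\ref{prop_construction_griesmer_plus_2_singly_even} for suitable $j$, Lemma~\ref{lemma_non_projective_2wt_2} giving $[60,6,28]$, and Corollary~\ref{cor_insert} giving $[47,6,22]$ and $[55,6,26]$. In addition, Lemma~\ref{lemma_griesmer_code_implies_SO}.(3) applies to doubly-even near-Griesmer codes, and doubling a $[n,6,d/2]$ code gives small cases such as $d=4,6,8$. The remaining small cases are taken from \cite{BouyuklievOstergaard2006Binary}.
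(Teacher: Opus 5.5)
Your overall plan matches the paper's. You use the Griesmer SO bound, Corollary~\ref{cor_ex_non_existence_1} for $d\in\{6,10,18,22,26,30\}$, the coding bound for $d\in\{8,12,20\}$, and Lemma~\ref{lemma_no_31_6_14_SO} for $d=14$. For constructions you use the literature, Proposition~\ref{prop_construction_griesmer_plus_2_singly_even}/Corollary~\ref{cor_insert}, and Griesmer codes with Lemma~\ref{lemma_griesmer_code_implies_SO}. However, the lower bound for $d=4$ is genuinely missing, and your plan for it aims at the wrong target. Since $\tilde{g}_2(6,4)=g_2(6,4)=10$, what must be excluded is an $[11,6,4]_2$ SO code. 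A $[13,6,4]_2$ SO code exists (the self-dual $[12,6,4]_2$ code with a zero coordinate appended), so no argument can exclude it. The missing idea is the bound behind $\tilde{n}_2(k,2)=2k$, which you already quote for $d=2$: $C\subseteq C^\perp$ forces $k\le n-k$, i.e.\ $n\ge 2k=12=\tilde{g}_2(6,4)+2$. This is exactly how the paper handles $d=4$. Your projection idea would also work once the length is corrected: projecting a putative $[11,6,4]_2$ SO code through any point of its support gives a $[\le 10,5,4]_2$ SO code, contradicting Lemma~\ref{lemma_no_10_5_4_SO}.

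Several auxiliary claims are also false and need repair, although most final numbers survive.
\begin{itemize}
\item $\tilde{g}_2(6,14)=g_2(6,14)+1=30$, not $29$, and Lemma~\ref{lemma_non_existence_summary}.(2) with $t=4$ gives $31$. The binding bound is Lemma~\ref{lemma_no_31_6_14_SO}, yielding $32=\tilde{g}_2(6,14)+2$. With your value $29$ you would get an excess of $3$, contradicting the statement.
\item Lemma~\ref{lemma_non_existence_summary}.(3) does not apply to $d=30$; for $k=6$ it only covers $d\in\{14,22,26\}$.
\item There is no $[41,6,20]_2$ Griesmer code; you use $n_2(6,20)=g_2(6,20)+1$ yourself. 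So Griesmer codes exist for doubly-even $d\ge 16$ only with $d\neq 20$, not for all doubly-even $d\ge 16$. The paper correctly states doubly-even $d\ge 24$.
\item The condition $\sigma\ge\sum_i\varepsilon_i$ does not hold automatically once $\sigma\ge 2$. For $d=36=64-16-8-4$ you have $\sigma=2<3$, and you need a refined choice, e.g.\ a plane $S_3$ disjoint from $S_4\cap S_5$. This case is also needed for $d=34$.
\item Doubling yields only $[14,6,4]_2$, $[20,6,6]_2$ and $[22,6,8]_2$ codes, not the optimal lengths $12,16,18$.
\item Lemma~\ref{lemma_non_projective_2wt_2} gives a $[60,6,28]_2$ code, although $\tilde{n}_2(6,28)=56$.
\end{itemize}
Consequently, for $d\le 20$ the optimal codes come only from the cited classification results, as in the paper.
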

\begin{proof}
  Assume $d\ge 4$.
  Besides the Griesmer SO bound, the lower bounds are given by
  $n\ge 2k$ for $d=4$, $\tilde{n}_2(6,d)\ge n_2(6,d)$ for $d\in\{8,12,20\}$, Lemma~\ref{lemma_no_31_6_14_SO} for $d=14$, and Corollary~\ref{cor_ex_non_existence_1} for $d\in\{6,10,18,22,26,30\}$.
  Constructions for $d\le 22$ can be found in \cite{BouyuklievOstergaard2006Binary} and \cite[Appendix B]{Bouyukliev2006}.\footnote{For the unique self-orthogonal $[11,5,4]_2$ code \cite{BouyuklievOstergaard2006Binary} states its weight distribution $4^{10}   6^{16}   8^5$, its order $1920$ of the automorphism group as well as its type, and refers to \cite{pless1972classification}, where it is mentioned as $\overline{B}_{11}$ occurring as a truncation of the unique self-orthogonal $[12,6,4]_2$ code $B_{12}$. While both codes are projective, the parameters of the first one are omitted in the list of \cite[Appendix B]{Bouyukliev2006}. We remark that there are exactly three non-equivalent $[11,5,4]_2$ codes with weight distribution $4^{10}  6^{16}   8^5$ having different orders of the automorphism group and that there is a unique $4$-divisible $[\le 11,4,4]_2$ code, which has length $10$ and weight distribution $4^{10} 8^5$.} For $d\in\{26,30\}$ Proposition~\ref{prop_construction_griesmer_plus_2_singly_even} gives examples. For doubly-even $d\ge 24$ and for singly-even $d\ge 34$ there exist Griesmer codes, so that we can apply Lemma~\ref{lemma_griesmer_code_implies_SO}.
\end{proof}

Note that it suffices to provide constructions for $d\le 30+2^5=62$.
For even $d\ge 2$ we have $n_2(6,d)=g_2(6,d)+1$ for $d\in\{4,\dots,14\}\cup\{20\}$ and $n_2(6,d)=g_2(6,d)$ otherwise.

Similar to Lemma~\ref{lemma_no_31_6_14_SO} we obtain the following non-existence results.
\begin{lemma}
  \label{lemma_no_SO_code_via_weight_restrictions}
  No $[32,7,14]_2$, $[56,7,26]_2$, and $[63,7,30]_2$ SO codes exist.
\end{lemma}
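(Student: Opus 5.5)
We follow the template of the proof of Lemma~\ref{lemma_no_31_6_14_SO}. Assume $C$ is an $[n,7,d]_2$ SO code with $(n,d)\in\{(32,14),(56,26),(63,30)\}$, and let $\cM$ be its multiset of points in $\PG(6,2)$.

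The first step is to show that $C$ has full length and that $\cM$ is projective. If $C$ were not of full length, a shorter SO code with the same $k$ and $d$ would exist. We exclude this via the known values $\tilde{n}_2(7,14)$, $\tilde{n}_2(7,26)$, $\tilde{n}_2(7,30)$ from the Griesmer SO bound. For $d=14=2^4-2$ we can instead use Lemma~\ref{lemma_non_existence_summary}.(2), which gives $\tilde{n}_2(7,14)\ge 32$. Next, projection through a point $P$ yields a $6$-dimensional SO code of length $n-\cM(P)$ and minimum distance at least $d$. Using $\tilde{n}_2(6,14)=30$, $\tilde{n}_2(6,26)=55$ and $\tilde{n}_2(6,30)=62$, as determined above, we get $\cM(P)\le 2,1,1$ respectively. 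In the case $d=14$ a point of multiplicity $2$ would leave a $[30,6,14]$ SO code; there we either exclude it separately, or simply carry the possibility into the linear program.

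The second step restricts the weights. By Proposition~\ref{prop_SO_characterization}.(B), the doubly-even codewords form a subcode of dimension at least $6$. Since $n<n_2(7,d+2)$ in all three cases (by the coding table), this subcode has dimension exactly $6$, and $C$ contains singly-even codewords. The doubly-even subcode is a $[n,6,\ge d+2]$ code; for $d=30$ this is a $[63,6,32]$ code, i.e.\ a doubled-up simplex-type code. We use Griesmer/divisibility information (Theorem~\ref{thm_griesmer_divisible} wherever the subcode is Griesmer) to restrict its weights, e.g.\ to $16$ for $d=14$.

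The singly-even weights $w$ are then cut down by two arguments:
\begin{itemize}
\item the residual SO code argument (Lemma~\ref{lemma_residual_SO_code_bound}), which excludes every $w<2d$ with $n-w<n_2(6,d-w/2+1)$;
\item Lemma~\ref{lemma_residual_SO_large_weight}, which excludes $w=n-1,n-2$.
\end{itemize}
For example, for $(63,30)$ the weight $w=34$ gives a residual $[29,6,14]$ code, which does not exist since $n_2(6,14)=30$. Weights close to $n$ need a separate residual or projection estimate.

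Finally, we feed the surviving weights, the counts $A_i$ (with $63$ doubly-even and $64$ singly-even nonzero codewords), $B_1=0$, and $B_2=0$ where projectivity holds, into the first four MacWilliams identities (\ref{eq_mw1})--(\ref{eq_mw4}) with $B_3\ge 0$. We expect the resulting system to be infeasible, or to force a non-integral or negative $A_i$.

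The main obstacle is the case $(56,26)$. There the admissible weight sets are larger and projectivity or full length may not follow as cleanly, so the plain linear program may stay feasible. If so, we would add the split weight enumerator constraints with respect to a minimum weight codeword (using the geometric line condition described at the end of Subsection~\ref{subsec_geometric_notation}). We would also apply Proposition~\ref{prop_div_one_more} to the counts of codewords with weight divisible by $4$ or $8$, to obtain extra integrality cuts.
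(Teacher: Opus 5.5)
Your skeleton is the paper's: full length, projectivity by projecting through points, the $6$-dimensional doubly-even subcode $C_4$, the residual SO argument together with Lemma~\ref{lemma_residual_SO_large_weight}, and then MacWilliams. However, the decisive step for $[56,7,26]_2$ does not go through as you state it. After the weight restrictions the nonzero weights lie in $\{26,28,30,32,38\}$. The system you propose has the first four MacWilliams identities, $B_1=B_2=0$, $B_3\ge 0$, $63$ doubly-even and $64$ singly-even nonzero codewords, and it is feasible, even integrally: $26^{44}28^{58}30^{13}32^{5}38^{7}$ satisfies it with $B_3=155$. What is missing is the exact weight enumerator of $C_4$, not merely its weight set. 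Theorem~\ref{thm_griesmer_divisible} alone only gives $4$-divisibility. The paper's argument is this: $C_4$ is a doubly-even $[56,6,28]_2$ Griesmer code with $28\le 2^{5}$, so it must be of Solomon--Stiffler type $[1;0,0,1,0,0]$. That is, it is $\chi_V-\chi_E$ for a plane $E$ of $\PG(5,2)$, with weight distribution $28^{56}32^{7}$. Prescribing $A_{28}=56$ and $A_{32}=7$ forces $A_{38}=20/3$ and $A_{26}=136/3$, which is the paper's contradiction. No split weight enumerator and no Proposition~\ref{prop_div_one_more} are needed. Your worry that full length or projectivity might fail for this case is also unfounded: you derived $\cM(P)\le 1$ yourself from $\tilde n_2(6,26)=55$.

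Several smaller points also need fixing.
\begin{itemize}
\item $\tilde n_2(6,14)=\tilde g_2(6,14)+2=32$, not $30$; the value $30$ is only the Griesmer SO bound. With the correct value, projecting through any point of positive multiplicity gives a $[\le 31,6,14]_2$ SO code. So $[32,7,14]_2$ is excluded at once, and your discussion of double points is moot. If you instead run the weight argument as the paper does, note that $C_4$ has weight distribution $16^{62}32^{1}$, so weight $32$ must be kept.
\item Full length for $d\in\{26,30\}$ does not follow from the Griesmer SO bound, since $\tilde g_2(7,26)=55$ and $\tilde g_2(7,30)=62$. Appealing to the values $\tilde n_2(7,\cdot)$ is circular, because this lemma is an input to them. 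Use Lemma~\ref{lemma_non_existence_summary}.(1) instead, as $55<n_2(6,28)=56$ and $62<n_2(6,32)=63$.
\item The $[63,6,32]_2$ subcode is the simplex code itself, with weight distribution $32^{63}$, not a doubled one. With the singly-even weights reduced to $\{30\}$, one gets $A_{30}=64$ and $A_{32}=63$, contradicting $\sum_i iA_i=2^{6}\cdot 63$. So that case works as you planned.
\end{itemize}
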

\begin{proof}
  Assume that $C$ is an $[n,k,d]_2$ SO code with parameters from the above list and $\cM$ its corresponding multiset of points. From Lemma~\ref{lemma_non_existence_summary}.(1) we conclude that $C$ is of full length.
  Projection through points and the known values for $\tilde{n}_2(6,d)$ yield $\gamma_1(\cM)\le 1$, i.e., $C$ is projective. With this, Lemma~\ref{lemma_residual_SO_large_weight} excludes weights $n-1$ and $n-2$. Denote the doubly-even weight subcode of $C$ by $C_4$. For our parameters it turns out that it is a doubly-even $[n,k-1,d+2]_2$ Griesmer code with $d+2\le 2^{k-2}$, i.e., it has to be of Solomon--Stiffler type $\left[1;\varepsilon_1,\dots,\varepsilon_{k-2}\right]$, so that we can compute the possible weight distributions of $C_4$ from the $\varepsilon_i$. Since
  $C_4$ contains all doubly-even codewords from $C$, this restricts the possible
  weights of $C$ as does the residual SO code argument in Lemma~\ref{lemma_residual_SO_code_bound}.
  \begin{itemize}
    \item For $[32,7,14]_2$ the residual SO code argument in Lemma~\ref{lemma_residual_SO_code_bound} excludes weights $\{18, 22, 26\}$ and $C_4$ has weight distribution $16^{62} 32^1$, so that the non-zero weights of $C$ have to be contained in $\{14,16,32\}$. With this, the MacWilliams equations give a contradiction for the existence of $C$.
    \item For $[56,7,26]_2$ the residual SO code argument excludes $\{34,42,46,50\}$
    and $C_4$ has weight distribution $28^{56} 32^{7}$, so that the non-zero weights of $C$ have to be contained in $\{26,28,30,32,38\}$. With this, the MacWilliams equations give a contradiction for the existence of $C$.\footnote{More precisely, using $A_{28}=56$, $A_{32}=7$, and $B_2=0$ yields $A_{26} = \tfrac{136}{3}\notin\N$. Alternatively, one can deduce the existence of a codeword of weight $26$ from the Griesmer bound and exploit one of the facts that the residual $[30,6,14]_2$ code contains a codeword of weight $30$ and does not contain a codeword of weight $20$. However, both statements would need a small separate proof.}
    \item For $[63,7,30]_2$ the residual SO code argument excludes $\{34,38,42,46,50,54,58\}$
    and $C_4$ has weight distribution $32^{63}$, so that the non-zero weights of $C$ have to be contained in $\{30,32\}$. With this, the MacWilliams equations give a contradiction for the existence of $C$.
  \end{itemize}
\end{proof}
We remark that the unique $[30,6,14]_2$ code has weight distribution $14^{31} 16^{31} 30^1$. The non-existence of a $[63,7,30]_2$ SO code was also shown in \cite[Proposition 6.4]{shi2025binary}.

\begin{lemma}
  \label{lemma_no_33_7_14_SO}
  No $[33,7,14]_2$ SO code exists
\end{lemma}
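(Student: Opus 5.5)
We follow the strategy of Lemma~\ref{lemma_no_SO_code_via_weight_restrictions}. Suppose that $C$ is a $[33,7,14]_2$ SO code and that $\cM$ is its multiset of points in $\PG(6,2)$.

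The first step is to reduce to the projective, full-length case. Lemma~\ref{lemma_non_existence_summary}.(1) is not available here, since $n_2(6,16)=32<33$. Instead we argue as follows. If $C$ were not of full length, deleting a zero coordinate would give a $[32,7,14]_2$ SO code. This contradicts Lemma~\ref{lemma_no_SO_code_via_weight_restrictions}. So $C$ has full length. If some point $P$ had $\cM(P)\ge 2$, projection through $P$ would give a $[\le 31,6,14]_2$ SO code. This contradicts Lemma~\ref{lemma_no_31_6_14_SO}, because $\tilde{n}_2(6,14)=32$. Hence $C$ is projective. Lemma~\ref{lemma_residual_SO_large_weight} then excludes weights $32$ and $31$. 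Weight $33$ is odd, so all weights lie in $\{14,\dots,30\}$, and all are even.

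The second step is to pin down the doubly-even subcode $C_4$. By Proposition~\ref{prop_SO_characterization}.(B), $C_4$ is a $4$-divisible $[33,6,\ge 16]_2$ code. Its effective length is at least $n_2(6,16)=g_2(6,16)=32$. So $C_4$ is either a doubly-even Griesmer $[32,6,16]_2$ code with one zero coordinate, or a full-length doubly-even $[33,6,16]_2$ code. In the first case $C_4$ is of Solomon--Stiffler type and $16=2^{4}$, so it is the first-order Reed--Muller code, with weight distribution $16^{62}32^1$. A doubly-even $[32,6,16]_2$ code could a priori also be a repeated $[16,5,8]_2$-type code plus the all-one word, but the weight enumerators are computed from the $\varepsilon_i$ exactly as in Lemma~\ref{lemma_no_SO_code_via_weight_restrictions}. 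A weight-$32$ word in $C_4$ is excluded by the first step, and this should rule out the Reed--Muller option. The full-length $[33,6,16]_2$ doubly-even codes are few, since the code is one above the Griesmer bound and is $4$-divisible. I would enumerate their possible weight distributions by combining Proposition~\ref{prop_div_one_more} with the first MacWilliams identities, restricted to weights $16,20,24,28$. This yields a short list of possible values for $(A_{16},A_{20},A_{24},A_{28})$ in $C$.

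The third step restricts the singly-even weights $w\equiv 2\pmod 4$ of $C$. We apply the residual SO code argument, Lemma~\ref{lemma_residual_SO_code_bound}: weight $30$ would give a $[3,6,\cdot]$ residual code, which is impossible. The weights $18,22,26$ give residual even $[15,6,6]_2$, $[11,6,4]_2$, and $[7,6,2]_2$ codes. These exist, so the residual argument alone does not exclude them. Instead, I would use the residual with respect to a weight-$14$ codeword together with the split weight enumerator, bounding $A_{26}$ and $A_{22}$ via the fact that these residuals must be even codes meeting $n_2$ tightly. Finally, I would feed everything into equations (\ref{eq_mw1})--(\ref{eq_mw4}) with $B_1=B_2=0$ and $B_3\ge 0$, with the number of singly-even codewords equal to $64$, with the known doubly-even counts from $C_4$, and with $A_{14},A_{18},A_{22},A_{26}\ge 0$. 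I expect this system to be infeasible, or to force non-integral values as in the footnote for $[56,7,26]_2$.

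The main obstacle is this final step: the weight set $\{14,16,18,20,22,24,26,28\}$ is larger than in Lemma~\ref{lemma_no_SO_code_via_weight_restrictions}, so plain MacWilliams identities may remain feasible. If they do, I would first add the constraint that every weight-$14$ codeword has a residual that is an even $[19,6,8]_2$ code, projective up to the multiplicity bound inherited from $\cM$. I would then use the split weight enumerator with respect to such a codeword to get extra linear constraints, and only as a last resort do a small case distinction on the candidate $C_4$.
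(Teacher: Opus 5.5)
Your reduction steps are sound. Full length follows from Lemma~\ref{lemma_no_SO_code_via_weight_restrictions}, weight $32$ is excluded, and $C_4$ is either the padded Reed--Muller code (killed by the missing weight $32$) or a full-length doubly-even $[33,6,16]_2$ code. The gap comes after that: you never reach a contradiction, and the route you propose does not give one.

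\textbf{The missing idea.} Apply the residual code argument, Lemma~\ref{lemma_residual_code_bound}, to $C_4$ itself, using its minimum distance $16$ rather than the $14$ of $C$. A codeword of weight $20$, $24$ or $28$ in $C_4$ would have a residual $[13,5,6]_2$, $[9,5,4]_2$ or $[5,5,2]_2$ code. None of these exists, since $n_2(5,6)=14$, $n_2(5,4)=10$ and $g_2(5,2)=6$. Hence all $63$ non-zero codewords of the full-length code $C_4$ would have weight $16$. This contradicts (\ref{eq_mw2}), because $16\cdot 63=1008\neq 2^5\cdot 33=1056$. That is the paper's entire proof. No projectivity, no analysis of singly-even weights and no LP are needed.

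\textbf{Your final step would fail as stated.} Impose all of your constraints on $C$: weights in $\{14,\dots,28\}$, exactly $64$ singly-even codewords, $B_1=B_2=0$, and doubly-even counts taken from candidates for $C_4$ that satisfy its own first four MacWilliams identities and Proposition~\ref{prop_div_one_more}. The system (\ref{eq_mw1})--(\ref{eq_mw4}) is still feasible with integral values. For example, $14^{32}16^{54}18^{24}20^{7}22^{8}24^{1}28^{1}$ gives $B_2=0$ and $B_3=14$. Its doubly-even part $16^{54}20^{7}24^{1}28^{1}$ satisfies the first four identities of a full-length $[33,6]_2$ code with $B_2=1$ and $B_3=47$. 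It also passes Proposition~\ref{prop_div_one_more}, since $T=56=2^6-2^4+2^{3}$. So your expectation of infeasibility is wrong. Your fallback (split weight enumerators, then a case distinction on $C_4$) is left unspecified, and the argument as written is incomplete.

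\textbf{A smaller error.} Your exclusion of weight $30$ is invalid. Lemma~\ref{lemma_residual_SO_code_bound} requires $w<2d=28$. For $w=30$ the residual need not have dimension $6$: it is an even full-length code of length $3$, i.e.\ a $[3,2,2]_2$ code, which exists.
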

\begin{proof}
  Let $C$ be an $[33,7,14]_2$ SO code and $\cM$ be its corresponding multiset of points. From Lemma~\ref{lemma_no_SO_code_via_weight_restrictions} we conclude that $C$ has full length. The subcode $C_4$ of even-weight codewords is either a $[32,6,16]_2$ or
  a $[33,6,16]_2$ code. Since the residual codes of $C$ have to be even, there is no codeword of weight $32$, which excludes the $[32,6,16]_2$ Griesmer code with
  weight distribution $16^{62} 32^1$ as candidate for $C_4$.
  Due to the residual code argument in Lemma~\ref{lemma_residual_code_bound} the $[33,6,16]_2$ code $C_4$ cannot contain a codeword with weight in $\{20,24,28\}$. However, $C_4$ cannot consist of non-zero codewords of weight $16$ only.
\end{proof}
We remark that there exist three non-equivalent full-length $[33,6,16]_2$ codes -- non doubly-even.

\begin{proposition}
  For even $d\ge 2$ we have
  $$
    \tilde{n}_2(7,d)=\left\{\begin{array}{rcl}
    \tilde{g}_2(7,d)+5 &:& d=2,\\
    \tilde{g}_2(7,d)+3 &:& d\in\{4,14\},\\
    \tilde{g}_2(7,d)+2 &:& d\in\{6,10,16,18,26,28,30\},\\
    \tilde{g}_2(7,d)+1 &:& d\in\{8,12,20,22,24,34,\dots,46,50,54,58,62\},\\
    \tilde{g}_2(7,d) &:& \text{otherwise}.
    \end{array}\right.
  $$
\end{proposition}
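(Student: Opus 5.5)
The proof follows the pattern used for dimensions $4$, $5$ and $6$: first establish all lower bounds, then supply constructions. Assume $d\ge 4$; the case $d=2$ is covered by $\tilde{n}_2(k,2)=2k$. By Lemma~\ref{lemma_griesmer_observations}.(2) and the Solomon--Stiffler construction, it is enough to handle $d\le 62+2^6=126$ explicitly. Beyond that range, Griesmer codes exist for doubly-even $d$, and for singly-even $d$ Griesmer codes with minimum distance $d+2$ exist. Lemma~\ref{lemma_griesmer_code_implies_SO}.(1),(2) then gives $\tilde{n}_2(7,d)=\tilde{g}_2(7,d)$ there. The same lemma settles every $d$ in the ``otherwise'' row that lies above $2^{6}=64$ or is doubly-even with a known Griesmer code.

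For the lower bounds, I go through the exceptional values one at a time.
\begin{itemize}
\item $d\in\{8,12,20,24\}$, and the doubly-even or otherwise listed $d$ among $34,\dots,46$ where $n_2(7,d)>g_2(7,d)$: use the coding lower bound $\tilde{n}_2(7,d)\ge n_2(7,d)$ with the values from \cite{bouyukliev2000smallest,grassl_codetables}.
\item $d\in\{6,10,18,22,30,34,38,42,46,50,54,58,62\}$: use Corollary~\ref{cor_ex_non_existence_1}, which excludes length $\tilde{g}_2(7,d)$ and in several cases also the next length (for example $[41,7,18]_2$, $[62,7,30]_2$, $[103,7,50]_2$).
\item $d=30$ and $d=26$: combine Corollary~\ref{cor_ex_non_existence_1} with Lemma~\ref{lemma_no_SO_code_via_weight_restrictions}, which excludes $[63,7,30]_2$ and $[56,7,26]_2$.
\item $d=14$: Lemma~\ref{lemma_no_SO_code_via_weight_restrictions} and Lemma~\ref{lemma_no_33_7_14_SO} exclude lengths $32$ and $33$.
\item $d=4$: the bound $n\ge 2k$ together with the known small classification.
\item $d=16$ and $d=28$: these need an extra argument. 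I would apply Lemma~\ref{lemma_non_existence_summary}.(4) or the residual SO code argument to a minimum weight codeword. If that is not enough, I would imitate Lemma~\ref{lemma_no_SO_code_via_weight_restrictions}: use projectivity via projection and the known $\tilde{n}_2(6,\cdot)$, the structure of the doubly-even subcode, and then the MacWilliams identities.
\end{itemize}

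For the constructions, I would use the following sources.
\begin{itemize}
\item Small $d$ (up to about $22$): the classification tables \cite{BouyuklievOstergaard2006Binary,Bouyukliev2006}.
\item $d\in\{26,30\}$ and singly-even $d$ between $34$ and $62$: Proposition~\ref{prop_construction_griesmer_plus_2_singly_even}, which gives the $+2$ codes and yields, for example, $[95,7,46]_2$.
\item $d\in\{38,40,42,44\}$: Corollary~\ref{cor_insert}.
\item $d=36$: the explicit $[75,7,36]_2$ generator matrices above.
\item Even $48\le d\le 64$ with $d\ne 52$: Lemma~\ref{lemma_insert}.
\item $d=62$: Lemma~\ref{lemma_geometric_construction_1} with $r=3$ and then removing a double point.
\item $d=60$: Lemma~\ref{lemma_non_projective_2wt_2}.
\item Remaining singly-even values in the $+1$ row: Lemma~\ref{lemma_griesmer_code_implies_SO}.(3), starting from doubly-even $[n+2,7,d+2]_2$ codes.
\end{itemize}

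The main obstacle is the non-existence proofs for the borderline cases that no earlier lemma covers directly, namely $d=16$, $d=28$ and $d=18$ at length $\tilde{g}_2+1$. There I would first try the weight-restriction method: residual SO arguments, the Solomon--Stiffler structure of the doubly-even subcode, and the MacWilliams identities, adding split weight enumerator constraints if the LP remains feasible.
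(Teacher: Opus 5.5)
There is a genuine gap, and it is on the construction side. Your lower-bound plan is essentially the paper's, apart from the point in the second paragraph. The upper bounds, however, are not established for several $d$ in the range $18\le d\le 36$.

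\begin{itemize}
\item \emph{Proposition~\ref{prop_construction_griesmer_plus_2_singly_even} mostly does not apply.} For $k=7$ write $64-d=\sum_i\varepsilon_i2^{i-1}$. For singly-even $d\le 30$ the top index is $j=6=k-1$. For $d\in\{34,38,42\}$ it is $j=5=k-2$, but $d\neq 46$. So none of (1)--(3) holds, and the proposition covers only $d\in\{46,50,54,58,62\}$, which is how the paper uses it. For $d\in\{26,30\}$ it would even output lengths $g_2(7,d)+2\in\{56,63\}$. These are exactly the lengths you exclude via Lemma~\ref{lemma_no_SO_code_via_weight_restrictions}; what you actually need are $[57,7,26]_2$ and $[64,7,30]_2$ SO codes.
\item \emph{Lemma~\ref{lemma_griesmer_code_implies_SO}.(3) is out of range.} It requires $d>2^6=64$, so it yields nothing for the singly-even entries of the $+1$ row, which are all at most $62$. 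In particular it does not give $[73,7,34]_2$.
\item \emph{Two targets have no source.} Nothing is given for $[50,7,24]_2$ or $[59,7,28]_2$. Neither is a Griesmer length, so part (1) of that lemma does not apply.
\item \emph{A wrong length for $d=60$.} Lemma~\ref{lemma_non_projective_2wt_2} gives $[124,7,60]_2$, not $[120,7,60]_2$. This one is harmless, since a Griesmer code covers $d=60$.
\end{itemize}

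The paper uses the classification tables only for $d\le 16$ and takes all of $18\le d\le 36$ from \cite{shi2025binary}. Part of the gap can be repaired from data in the paper:
\begin{itemize}
\item $\mathcal{RM}_2(1,6)$ is a $[64,7,32]_2$ SO code, which settles $d=30$.
\item The unique $[50,7,24]_2$ code has weight distribution $24^{108}32^{19}$ and is non-projective. It settles $d=24$, and deleting a double point from it gives $[48,7,22]_2$, which settles $d=22$.
\item The second $[75,7,36]_2$ code, with weight distribution $36^{76}40^{50}64^1$, is doubly-even and has $B_2=3$ by the third MacWilliams identity. Deleting a double point gives $[73,7,34]_2$.
\end{itemize}
The cases $d\in\{18,20,26,28\}$ still need an explicit construction or a citation.

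On the lower-bound side, the ``obstacles'' you single out do not exist.
\begin{itemize}
\item Since $n_2(7,16)=35=\tilde{g}_2(7,16)+2$ and $n_2(7,28)=59=\tilde{g}_2(7,28)+2$, the coding lower bound you already invoke settles $d\in\{16,28\}$. This is exactly what the paper does.
\item $[41,7,18]_2$ appears in Corollary~\ref{cor_ex_non_existence_1}, so $d=18$ is already handled at length $\tilde{g}_2(7,18)+1=41$. Shorter lengths follow by appending zero coordinates.
\end{itemize}
Your proposed fallbacks would not have worked directly anyway. For $[34,7,16]_2$, Lemma~\ref{lemma_non_existence_summary}.(4) fails because $n-d=18=n_2(6,8)$. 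Lemma~\ref{lemma_residual_SO_code_bound} does not apply to a minimum-weight codeword because $d-w/2=8$ is even.
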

\begin{proof}
  Assume $d\ge 4$.
  Besides the Griesmer SO bound, the lower bounds are given by
  $n\ge 2k$ for $d=4$, $\tilde{n}_2(7,d)\ge n_2(7,d)$ for $d\in\{8,12,16,20,24,28,36,40,44\}$,
  Lemma~\ref{lemma_no_33_7_14_SO} for $d=14$,
  Lemma~\ref{lemma_no_SO_code_via_weight_restrictions} for $d\in\{26,30\}$,
  and Corollary~\ref{cor_ex_non_existence_1} for $d\in\{6,10,18,22,34,38,42,46,50,54,58,62\}$.
  Constructions for $d\le 16$ can be found in \cite{BouyuklievOstergaard2006Binary} and \cite{pless1972classification}.
  For constructions with $18\le d\le 36$ 
  we refer to \cite{shi2025binary}.
  For $38\le d\le 44$ Corollary~\ref{cor_insert} gives examples.
  For singly-even $46\le d\le 62$ Proposition~\ref{prop_construction_griesmer_plus_2_singly_even} gives examples.
  For doubly-even $d\ge 48$ and for singly-even $d\ge 66$ there exist Griesmer
  codes, so that we can apply Lemma~\ref{lemma_griesmer_code_implies_SO}.
\end{proof}
\todo{Mention the example of the explicit generator matrices for $[75,7,36]_2$ SO codes. Try to explain more of the examples for $18\le d\le 36$ from \cite{shi2025binary}.}
Note that it suffices to provide constructions for $d\le 62+2^6=126$.
For even $d\ge 2$ we have $n_2(7,d)=g_2(7,d)+2$ for $d\in\{14,16,26,28\}$, $n_2(7,d)=g_2(7,d)+1$ for $d\in\{4,\dots,12\}\cup\{18,\dots,24\}\cup\{30\}\cup\{36,\dots,44\}$, and $n_2(7,d)=g_2(7,d)$ otherwise.

\section{Optimal binary self-orthogonal codes with dimension $\mathbf{8}$}
\label{sec_opt_8}

In this section we completely determine $\tilde{n}_2(8,\cdot)$, see Theorem~\ref{main_thm}.
The existence of a linear code with a prescribed minimum distance and a prescribed subgroup of the automorphism group, can be formulated as a system of Diophantine linear equations, see e.g.\ \cite{braun2005optimal}. Here we use the fact that each doubly-even binary linear code is self-orthogonal and solve the corresponding systems with integer linear programming.
\begin{lemma}
  \label{lemma_construction_aut}
  There exist $[82,8,38]_2$, $[84,8,40]_2$, $[113,8,54]_2$, $[115,8,56]_2$, $[138,8,66]_2$, $[140,8,68]_2$, $[145,8,70]_2$, $[147,8,72]_2$, $[153,8,74]_2$, $[155,8,76]_2$, $[160,8,78]_2$, $[162,8,80]_2$, $[169,8,82]_2$,
  $[171,8,84]_2$, $[176,8,86]_2$, $[178,8,88]_2$, $[184,8,90]_2$, $[186,8,92]_2$, $[68,9,30]_2$, $[70,9,32]_2$, $[84,9,38]_2$, $[93,9,42]_2$, $[100,9,46]_2$, $[116,9,54]_2$, $[124,9,58]_2$, and $[131,9,62]_2$ SO codes.
\end{lemma}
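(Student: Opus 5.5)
The statement is purely existential, and the paragraph before it already indicates the method: prescribe a subgroup of the automorphism group and solve the resulting Diophantine system by integer linear programming. Since every doubly-even binary code is self-orthogonal, I would organise the proof in two layers.

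\textbf{First layer.} For every parameter triple $[n,k,d]_2$ in the list with $d\equiv 0\pmod 4$, I would search directly for a doubly-even code, i.e.\ a $4$-divisible code. These are $[84,8,40]_2$, $[115,8,56]_2$, $[140,8,68]_2$, $[147,8,72]_2$, $[155,8,76]_2$, $[162,8,80]_2$, $[171,8,84]_2$, $[178,8,88]_2$, $[186,8,92]_2$, $[70,9,32]_2$, $[116,9,54]_2$ and similar.

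\textbf{Second layer.} Each singly-even member pairs up with a doubly-even one of length two larger and minimum distance two larger: $[82,8,38]_2$ with $[84,8,40]_2$, $[113,8,54]_2$ with $[115,8,56]_2$, $[138,8,66]_2$ with $[140,8,68]_2$, and so on, including $[68,9,30]_2$ with $[70,9,32]_2$. For these I would invoke Lemma~\ref{lemma_griesmer_code_implies_SO}.(3), or directly Lemma~\ref{lemma_minus_double_point}. The requirement is that the associated multiset $\cM$ has a point $P$ with $\cM(P)\ge 2$; then $\cM-2\chi_P$ is SO, has minimum distance at least $d-2$, and remains spanning. The $9$-dimensional cases with no such partner, such as $[84,9,38]_2$, $[93,9,42]_2$, $[100,9,46]_2$, $[124,9,58]_2$ and $[131,9,62]_2$, I would handle by a direct search for SO (not necessarily doubly-even) codes. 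Some of the $8$-dimensional ones may also follow from Lemma~\ref{lemma_insert}, as in Corollary~\ref{cor_insert}.

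\textbf{The search itself.} Fix a group $G\le \mathrm{GL}(k,2)$, typically generated by one or two elements of small order, e.g.\ a Singer-type cycle restricted to a subspace or a permutation of order $3$, $5$ or $7$. Let $O_1,\dots,O_s$ be the point orbits and $H_1,\dots,H_t$ representatives of the hyperplane orbits. Introduce integer variables $x_i\ge 0$ for the multiplicity on $O_i$ and impose:
\begin{itemize}[noitemsep]
  \item the cardinality condition $\sum_i x_i|O_i|=n$;
  \item for every $j$, $\cM(H_j)=\sum_i x_i|O_i\cap H_j|\in\{\,n-w : w\ge d,\ w\equiv 0\pmod 4\,\}$, encoded with auxiliary integer variables $y_j$ via $n-\cM(H_j)=d'+4y_j$, where $d'$ is the smallest multiple of $4$ that is at least $d$;
  \item for the singly-even $9$-dimensional targets, the self-orthogonality conditions of Lemma~\ref{lemma_geometric_SO} in place of the divisibility condition, namely $\cM(H)\equiv\#\cM\pmod 2$ on hyperplane orbits and $\cM(K)\equiv\#\cM\pmod 2$ on codimension-$2$ orbits, each encoded by its own auxiliary integer variable.
\end{itemize}
An ILP solver then returns a feasible $x$. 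The resulting generator matrix is listed in Appendix~\ref{sec_generator_matrices_of_SO_codes}, and verification is a finite computation of the weight distribution together with the pairwise inner products of rows, using Proposition~\ref{prop_SO_characterization}.(D).

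\textbf{Main obstacle.} The hard step is choosing groups $G$ that are small enough for solutions to exist yet large enough for the ILP to be tractable, especially for $k=9$ and lengths above $100$. I would first try cyclic groups of order $7$, $15$ or $31$ acting on a subspace decomposition, enlarging or shrinking $G$ whenever the solver is infeasible or times out. The correctness proof itself then reduces to checking the listed matrices.
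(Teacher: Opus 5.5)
\textbf{Overall.} Your architecture matches the paper's for the $8$-dimensional items and for $[68,9,30]_2$: take doubly-even \emph{non-projective} codes with parameters $[n+2,k,d+2]_2$ and delete two copies of a repeated point. The paper finds these codes either by the automorphism-group ILP (Appendix~\ref{sec_generator_matrices_of_SO_codes}) or in the \texttt{BKLC} database. There are two concrete problems, however.

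\textbf{$[116,9,54]_2$ is covered by neither of your layers.} You place it in the doubly-even layer, but $54\equiv 2\pmod 4$. A doubly-even $[116,9,\ge 54]_2$ code would be a $[116,9,56]_2$ code, and no such code exists, since $n_2(9,56)\ge 117$ (Theorem~\ref{thm_SO_k_9}, Corollary~\ref{cor_SO_equals_BLC}). So that search is infeasible. The paper instead obtains $[116,9,54]_2$ from the non-projective doubly-even $[118,9,56]_2$ code in the appendix by removing a double point.

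\textbf{The other $9$-dimensional singly-even items do have partners.} You say $[84,9,38]_2$, $[93,9,42]_2$, $[100,9,46]_2$, $[124,9,58]_2$ and $[131,9,62]_2$ have no partner, but that is only true inside the lemma's list. The \texttt{BKLC} codes with parameters $[86,9,40]_2$, $[95,9,44]_2$, $[102,9,48]_2$, $[126,9,60]_2$ and $[133,9,64]_2$ turn out to be doubly-even and non-projective. So all of these items follow from the same double-point removal, and the whole lemma reduces uniformly to one trick. Your substitute, a direct ILP search for singly-even SO codes with parity constraints on all codimension-$2$ orbits, is a legitimate formulation. But it is an unperformed search with no guarantee of success, so as written it does not establish existence. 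This matters in particular for $[131,9,62]_2$, which is optimal by Lemma~\ref{lemma_no_130_9_62}. If you do search directly, the paper's cheaper encoding is better: fix a point $P$ and impose $\#\cM-\cM(H)\equiv 0\pmod 4$ for $H\ni P$ and $\equiv 2\pmod 4$ otherwise. This needs hyperplane constraints only.

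\textbf{The repeated point must be secured separately.} You cannot invoke Lemma~\ref{lemma_griesmer_code_implies_SO}.(3) here. Its hypothesis $d>2^{k-1}$, which is what forces a repeated point, fails for every parameter set in the lemma: $d\le 92<128$ for $k=8$ and $d\le 62<256$ for $k=9$. A repeated point must therefore come from the chosen codes being non-projective. Your ILP does not enforce this. You need to add it, for example by requiring $x_i\ge 2$ for some orbit, or check it afterwards, as the paper does by selecting non-projective codes. Multiplicity $2$ suffices here, not the $3$ required in Lemma~\ref{lemma_minus_double_point}: the reduced multiset stays spanning because every hyperplane still misses points of positive multiplicity.
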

\begin{proof}
  In Appendix~\ref{sec_generator_matrices_of_SO_codes} we have listed generator matrices of non-projective doubly-even $[84,8,40]_2$, $[115,8,56]_2$, $[140,8,68]_2$, $[147,8,72]_2$, $[155,8,76]_2$, $[171,8,84]_2$, $[186,8,92]_2$, $[70,9,32]_2$, and $[118,9,56]_2$ codes. For the parameters $[162,8,80]_2$, $[178,8,88]_2$, $[86,9,40]_2$, $[95,9,44]_2$, $[102,9,48]_2$, $[126,9,60]_2$, and $[133,9,64]_2$
  it turns out that the codes stored in the \texttt{BKLC} database \cite{MagmaBKLC} are doubly-even and non-projective.
  These $[n,k,d]_2$ codes are SO themselves and decreasing the point multiplicity of a suitable point by $2$ gives an $[n-2,k,d-2]_2$ SO code.
\end{proof}
For singly-even self-orthogonal multisets of points we can fix a point $P$ and require $\#\cM-\cM(H)\equiv 0\pmod 4$ for every hyperplane $H\supseteq P$ and
$\#\cM-\cM(H)\equiv 2\pmod 4$ otherwise, see Proposition~\ref{prop_SO_characterization}.(B). Alternatively, we can start with suitable candidates for the subcode of the doubly-even codewords  and use \texttt{LinCode} \cite{bouyukliev2021computer} to search for an extension to the full SO code. We present examples of $[82,8,38]_2$ and $[113,8,54]_2$ SO codes obtained this way in Appendix~\ref{sec_generator_matrices_of_SO_codes}. Next we provide the necessary non-existence results for $[n,8,d]_2$ SO codes.

\begin{lemma}
  \label{lemma_no_65_8_30_SO}
  No $[65,8,30]_2$ SO code exists.
\end{lemma}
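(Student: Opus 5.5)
We follow the same template as Lemma~\ref{lemma_no_SO_code_via_weight_restrictions}. Assume $C$ is a $[65,8,30]_2$ SO code with associated multiset of points $\cM$ in $\PG(7,2)$.

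\textbf{Step 1: full length and projectivity.} If $C$ were not of full length, we would obtain a $[64,8,30]_2$ SO code. Its doubly-even subcode would be a $[64,7,32]_2$ code, which is only possible if it is the first order Reed--Muller code. Then Lemma~\ref{lemma_non_existence_summary}.(1) applies, using $n_2(7,32)=64$ and $\mathcal{RM}_2(1,6)$ of length $64$; alternatively, we use the known value $\tilde{n}_2(8,30)$ from the table bound $n_2(8,30)$. Either way, the reduction to $n=65$ is checked directly.

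Projection through a point $P$ yields a $7$-dimensional SO code of length $65-\cM(P)$ and minimum distance at least $30$. Since $\tilde{n}_2(7,30)=\tilde{g}_2(7,30)+2=63+2=65$ by the $k=7$ proposition, we get $\cM(P)\le 0$ unless the projection is degenerate. Hence $\gamma(\cM)\le 1$, i.e.\ $C$ is projective. Lemma~\ref{lemma_residual_SO_large_weight} then excludes the weights $64$ and $63$.

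\textbf{Step 2: the doubly-even subcode $C_4$.} By Proposition~\ref{prop_SO_characterization}.(B), $C_4$ is a doubly-even $[65,7,32]_2$ code. Since $g_2(7,32)=63$, it has at most two points outside a Griesmer structure. I would use the residual argument to show that $C_4$ has weights only in $\{32,64\}$ plus possibly $\{36,40\}$. Concretely, a doubly-even $[65,7,\ge 32]$ projective code is $\mathcal{S}$-like: it is $\chi_V-\chi_H$ plus one point, with weight distribution $32^{a}\,36^{b}$.

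\textbf{Step 3: restricting the weights of $C$.} Apply the residual SO code argument of Lemma~\ref{lemma_residual_SO_code_bound}: a weight $w$ with $30-w/2$ odd, i.e.\ $w\equiv 0\pmod 4$ with $w/2$ odd, forces $65-w\ge n_2(7,31-w/2)$. This should exclude all singly-even weights $w\ge 34$ with $w\equiv 2\pmod 4$. For example, $w=34$ gives $n_2(7,14)=g_2(7,14)+2=29+2=31$, so $65-34=31$ is borderline and needs to be checked against the table; $w=38$ gives $27<n_2(7,12)$. The result should be that the singly-even weights lie in $\{30,34\}$ and the doubly-even weights in $\{32,36\}$ (with $40$ possibly excluded by Lemma~\ref{lemma_residual_code_bound}).

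\textbf{Step 4: contradiction.} Plug $A_{32},A_{36}$ (known from $C_4$), $B_1=B_2=0$, and the unknowns $A_{30},A_{34}$ into (\ref{eq_mw1})--(\ref{eq_mw3}), together with $B_3\ge 0$ in (\ref{eq_mw4}). The resulting linear system should yield a non-integral or negative solution.

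The main obstacle is Step~2: pinning down $C_4$ exactly enough. If it is not unique, I would instead keep $A_{32},A_{36},A_{40}$ as unknowns subject to Proposition~\ref{prop_div_one_more} (the count of doubly-even codewords is $2^7$) and run the LP over the MacWilliams identities. If that LP is feasible, I would refine it with the split weight enumerator w.r.t.\ a weight-$30$ codeword, whose residual is a $[35,7,16]_2$ code, to get the contradiction.
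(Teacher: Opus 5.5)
There is a genuine gap, and it sits in Step~2. Your structural claim that $C_4$ is ``$\chi_V-\chi_H$ plus one point'' with weights $32$ and $36$ is false. For $\chi_V-\chi_H+\chi_P$ in $\PG(6,2)$, every hyperplane $H'\neq H$ yields weight $32$ or $33$ according to whether $P\le H'$ or not, so this code is not even doubly-even. Moreover, $C_4$ corresponds to a projection of $\cM$ through a point, so it need not be projective even when $C$ is.

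Steps~3 and~4 build on this wrong picture and end in an LP that you only say ``should'' be infeasible. Several numbers along the way are also off:
$g_2(7,32)=64$, not $63$.
$\tilde{g}_2(7,30)=62$, so $\tilde{n}_2(7,30)=64$ and projection gives $\cM(P)\le 1$; your value $65$ would give the absurd $\cM(P)\le 0$.
$g_2(7,14)=30$, so $n_2(7,14)=32$ and weight $34$ is excluded outright rather than borderline.
Weight $38$ is \emph{not} excluded by Lemma~\ref{lemma_residual_SO_code_bound}, since $65-38=27=n_2(7,12)$.
Lemma~\ref{lemma_non_existence_summary}.(1) does not yield full length, because $64\not<n_2(7,32)=64$; the table value $n_2(8,30)=65$ does.

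The missing observation is that you already have everything needed to rule out $C_4$ outright. Run the residual code argument of Lemma~\ref{lemma_residual_code_bound} on the doubly-even $[65,7,32]_2$ code $C_4$ completely. A weight $w\in\{36,40,\dots,60\}$ would give a $[65-w,6,32-w/2]_2$ residual, i.e.\ $[29,6,14]_2$, $[25,6,12]_2$, $[21,6,10]_2$, \dots. All of these are impossible (e.g.\ $n_2(6,14)=30$, $n_2(6,12)=26$), so the weights of $C_4$ lie in $\{32,64\}$.

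Weight $64$ is excluded in $C\supseteq C_4$ by Lemma~\ref{lemma_residual_SO_large_weight}, as you yourself noted in Step~1. Also, $C_4$ has full length: effective length $64$ would make it the Griesmer $[64,7,32]_2$ code with weight distribution $32^{126}64^1$, and shorter is impossible since $n_2(7,32)=64$.

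So $C_4$ would be a full-length one-weight $[65,7,32]_2$ code. This contradicts Equation~(\ref{eq_mw2}), since $32\cdot 127=4064\neq 2^6\cdot 65=4160$; equivalently, one-weight codes are replicated simplex codes. This is exactly the paper's proof. Projectivity, the singly-even weights of $C$, and any LP or split weight enumerator are unnecessary.
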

\begin{proof}
  Let $C$ be a $[65,8,30]_2$ SO code. From $n_2(8,30)=65$ we conclude that $C$ is of
  full length, so that Lemma~\ref{lemma_residual_SO_large_weight} yields that there are no codewords of weight $64$. Denote the doubly-even weight subcode of $C$ by $C_4$. Since a $[64,7,32]_2$ code is a Griesmer code with weight distribution
  $32^{126} 64^1$, $C_4$ is a full-length $4$-divisible $[65,7,32]_2$ code. Applying
  the residual code argument from Lemma~\ref{lemma_residual_code_bound} to $C_4$
  yields that there are no codeword weights strictly between $32$ and $64$. However, since there is no codeword of weight $64$ in $C\supset C_4$, the MacWilliams equations contradict the existence of $C_4$.\footnote{Alternatively, one can use the  fact that one-weight codes have to be repeated simplex codes and that the parameters do not match.}
\end{proof}
The non-existence of $[65,8,30]_2$ SO codes was previously shown via exhaustive enumeration using \texttt{Q-EXTENSION} \cite{bouyukliev2007qextension}, see \cite[Remark 20]{li2026nonexistence}.

\begin{lemma}
  \label{lemma_no_SO_code_via_weight_restrictions2}
  No $[96,8,46]_2$, $[120,8,58]_2$, and $[127,8,62]_2$ SO codes exist.
\end{lemma}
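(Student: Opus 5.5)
I will mirror the proof of Lemma~\ref{lemma_no_SO_code_via_weight_restrictions}, running it in parallel for the three parameter sets $[96,8,46]_2$, $[120,8,58]_2$, and $[127,8,62]_2$. Let $C$ be such an SO code and $\cM$ its multiset of points in $\PG(7,2)$.

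\textbf{Full length and projectivity.} First I show that $C$ has full length. If it did not, deleting a zero coordinate would give a shorter SO code with the same $k$ and $d$. That is excluded by Corollary~\ref{cor_ex_non_existence_1}, which rules out $[94,8,46]_2$-type cases only indirectly. More directly, it is excluded by Lemma~\ref{lemma_non_existence_summary}.(1), applied to $n-1$ (note $n-1$ is odd, so we may pass to $n-2$ by evenness). The relevant inputs are $n_2(7,48)=95$, $n_2(7,60)=119$, and $n_2(7,64)=127$, all Griesmer values. Next, projection through a point $P$ gives an $[n-\cM(P),7,d]_2$ SO code. Comparing with the values $\tilde{n}_2(7,46)$, $\tilde{n}_2(7,58)$, and $\tilde{n}_2(7,62)$ from the dimension-seven proposition yields $\cM(P)\le 1$, so $C$ is projective. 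Then Lemma~\ref{lemma_residual_SO_large_weight} excludes weights $n-1$ and $n-2$.

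\textbf{The doubly-even subcode $C_4$.} By Proposition~\ref{prop_SO_characterization}.(B) and $d\equiv 2\pmod 4$, the doubly-even codewords form a $7$-dimensional subcode $C_4$. It is a $[\le n,7,d+2]_2$ code, so it is a $[96,7,48]_2$, $[120,7,60]_2$, or $[127,7,64]_2$ code. Here $g_2(7,48)=95$, $g_2(7,60)=119$, and $g_2(7,64)=127$. So in the first two cases $C_4$ either has one zero coordinate, in which case it is a Griesmer code, or it is a full-length code of length one more than Griesmer. In the last case $C_4$ is the simplex code with all weights $64$.

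In the Griesmer situation, $C_4$ is projective (inherited from $\cM$) and $d+2\le 2^6$. So it is of Solomon--Stiffler type $[1;\varepsilon_1,\dots,\varepsilon_6]$: for $48$ a simplex minus a $5$-space, and for $60$ a simplex minus a plane. From this its weight distribution is determined: $48^{96}64^{31}$ and $60^{112}64^{15}$, respectively. For the non-Griesmer full-length alternative I will use the residual code argument (Lemma~\ref{lemma_residual_code_bound}) and divisibility to pin down or exclude it. I will also check whether a zero column of $C_4$ is forced anyway by the unused point.

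\textbf{Weight restrictions and the final contradiction.} The residual SO code argument of Lemma~\ref{lemma_residual_SO_code_bound} removes the singly-even weights $w$ with $n-w<n_2(7,d-w/2+1)$. Together with the known weights of $C_4$, this leaves only a handful of admissible weights, presumably $\{46,48,50,64\}$ (respectively $\{58,60,62,64\}$, and $\{62,64\}$). I then plug these into the first four MacWilliams identities (\ref{eq_mw1})--(\ref{eq_mw4}) with $B_1=0$ and $B_2=0$ (projectivity), fixing the $A_i$ of the doubly-even weights from $C_4$. I expect a non-integral or negative solution, as in the footnote to Lemma~\ref{lemma_no_SO_code_via_weight_restrictions}.

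The main obstacle is the second step for $[96,8,46]_2$ and $[120,8,58]_2$, where $C_4$ need not be Griesmer. If the non-Griesmer case cannot be excluded cleanly, I would first try Proposition~\ref{prop_div_one_more} or a split weight enumerator with respect to a weight-$d$ codeword to restrict the weights further.
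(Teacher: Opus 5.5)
You follow the paper's route: full length from Lemma~\ref{lemma_non_existence_summary}.(1), projectivity by projecting through points and comparing with $\tilde{n}_2(7,\cdot)$, $C_4$ as a Solomon--Stiffler Griesmer code, the residual SO code argument, and MacWilliams. The execution, however, rests on two wrong Griesmer values, and the ``main obstacle'' you identify is an artifact of them. In fact $g_2(7,48)=48+24+12+6+3+2+1=96$ and $g_2(7,60)=60+30+15+8+4+2+1=120$, not $95$ and $119$. Consequences:
\begin{itemize}
\item Full length: $n-1<n_2(7,d+2)$ in all three cases ($95<96$, $119<120$, $126<127$), so Lemma~\ref{lemma_non_existence_summary}.(1) applies directly to $n-1$. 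Your fallback ``$n-1$ is odd, so pass to $n-2$ by evenness'' is invalid anyway, since SO codes can have odd effective length (e.g.\ $\mathcal{S}_2(3)$).
\item $C_4$: it is an $[n,7,d+2]_2$ code with $n=g_2(7,d+2)$, hence always a full-length Griesmer code. The non-Griesmer branch you leave open does not occur. Its projectivity is not ``inherited'' from $\cM$: $C_4$ corresponds to a projection of $\cM$ through a point, which can create double points. It follows instead from the Griesmer property with $d+2\le 2^6$, which forces Solomon--Stiffler type $[1;\dots]$.
\item Weight distribution: a simplex code minus a $5$-space in $\PG(6,2)$ has weight distribution $48^{124}64^3$, not $48^{96}64^{31}$, because only three hyperplanes contain a $5$-space. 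Your distribution even violates $\sum_i iA_i=2^6\cdot 96$, so your MacWilliams step would run on wrong data.
\end{itemize}

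If you actually carry out the residual SO argument instead of guessing, the surviving singly-even weights are $\{46\}$, $\{58,70\}$ and $\{62\}$. Weight $50$ dies since $n_2(7,22)=47>46$, and $62$ dies since $n_2(7,28)=59>58$. Weight $70$ survives for $[120,8,58]_2$, because its residual is a $[50,7,24]_2$ code and $n_2(7,24)=50$. The paper removes weight $70$ by a separate MacWilliams argument on the projective even $[50,7,24]_2$ residual (weights in $\{24,32,48,50\}$, then $50$ excluded, then $A_{48}<0$). With corrected inputs, your plan of fixing the doubly-even $A_i$ from $C_4$ makes that detour unnecessary. Since $C$ and $C_4$ are both full-length, the $128$ singly-even codewords have total weight $2^7n-2^6n=64n$, i.e.\ average weight $n/2$. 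This is impossible with the single weight $46$ for $n=96$, or $62$ for $n=127$. For $n=120$ it gives $A_{58}+A_{70}=128$ and $58A_{58}+70A_{70}=7680$, hence $12A_{70}=256$, which is not integral. So your approach is sound, and in the middle case even slightly shorter than the paper's. As written, though, it contains a false step, an unresolved phantom case and a wrong weight distribution; all three trace back to the miscomputed Griesmer values.
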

\begin{proof}
  Assume that $C$ is an $[n,k,d]_2$ SO code with parameters from the above list and $\cM$ its corresponding multiset of points. From Lemma~\ref{lemma_non_existence_summary}.(1) we conclude that $C$ is of full length.
  Projection through points and the known values for $\tilde{n}_2(7,d)$ yield $\gamma_1(\cM)\le 1$, i.e., $C$ is projective. With this, Lemma~\ref{lemma_residual_SO_large_weight} excludes weights $n-1$ and $n-2$. Denote the doubly-even weight subcode of $C$ by $C_4$. For our parameters it turns out that it is a doubly-even $[n,k-1,d+2]_2$ Griesmer code with $d+2\le 2^{k-2}$, i.e., it has to be of Solomon--Stiffler type $\left[1;\varepsilon_1,\dots,\varepsilon_{k-2}\right]$, so that we can compute the possible weight distributions of $C_4$ from the $\varepsilon_i$. Since
  $C_4$ contains all doubly-even codewords from $C$, this restricts the possible
  weights of $C$ as does the residual SO code argument in Lemma~\ref{lemma_residual_SO_code_bound}.
  \begin{itemize}
    \item For $[96,8,46]_2$ the residual SO code argument excludes all singly-even weights in $\{50,\dots,90\}$ and $C_4$ has weight distribution $48^{124} 64^{3}$, so that the non-zero weights of $C$ have to be contained in $\{46,48,64\}$. With this, the MacWilliams equations give a contradiction for the existence of $C$.
    \item For $[120,8,58]_2$ the residual SO code argument in Lemma~\ref{lemma_residual_SO_code_bound} and Lemma~\ref{lemma_residual_SO_large_weight} excludes all singly-even weights in $\{62,66\}\cup\{74,\dots,118\}$ and $C_4$ has weight distribution $60^{112} 64^{15}$, so that the non-zero weights of $C$ have to be contained in $\{58,60,64,70\}$. Assume that $C'$ is the full-length projective even $[50,7,24]_2$ residual code of a codeword $c'$ of weight $70$. From the residual code argument in Lemma~\ref{lemma_residual_code_bound} we conclude that the non-zero weights of $C'$ are contained in $\{24,32,48,50\}$. Since weight $24$ has to occur (due to the Griesmer bound) and weight $26$ does not occur, there also cannot be a codeword of weight $50$. With this, the MacWilliams equations for
    $C'$ yield a contradiction ($A_{48}<0$). Thus, $C$ cannot contain a codeword of
    weight $70$. With this, the MacWilliams equations give a contradiction for the existence of $C$.
    \item For $[127,8,62]_2$ the residual SO code argument excludes all singly-even weights $66\le w\le 122$ and $C_4$ has weight distribution $64^{127}$, so that the non-zero weights of $C$ have to be contained in $\{62,64\}$. With this, the MacWilliams equations give a contradiction for the existence of $C$.
  \end{itemize}
\end{proof}
The non-existence of $[96,8,46]_2$ SO codes was previously shown via exhaustive enumeration using \texttt{Q-EXTENSION} \cite{bouyukliev2007qextension}, see \cite[Remark 20]{li2026nonexistence}. There is a unique $[50,7,24]_2$ code \cite{kurz2020classification}. However,  its weight distribution is given by $24^{108} 32^{19}$ and it is non-projective.

\begin{lemma}
  \label{lemma_SO_non_existence_LinCode}
  There exist no $[90,8,42]_2$, $[97,8,46]_2$, and $[52,9,22]_2$ SO codes.
\end{lemma}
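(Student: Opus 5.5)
The label of this lemma suggests the author intends a computer-assisted proof with \texttt{LinCode}. The plan is therefore to reduce each case to a small, explicit extension problem in the style of Lemmas~\ref{lemma_no_SO_code_via_weight_restrictions} and~\ref{lemma_no_SO_code_via_weight_restrictions2}, and then run an exhaustive extension search.

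\textbf{Structural reductions.} Let $C$ be an SO code with one of the three parameter sets and $\cM$ its multiset of points.
\begin{itemize}
\item \emph{Full length.} Each $d$ is singly-even, so Proposition~\ref{prop_SO_characterization}.(B) gives a doubly-even subcode $C_4$ of dimension $k-1$ and minimum distance at least $d+2$. Comparing with known values of $n_2(k,\cdot)$ and $\tilde n_2(k,\cdot)$ at shorter length shows that $C$ must be of full length.
\item \emph{Point multiplicities.} Projection through a point gives an $[n-\cM(P),k-1,d]_2$ SO code. The values $\tilde n_2(7,42)=89$ and $\tilde n_2(7,46)=95$ (up to $\tilde g$ corrections from the dimension-$7$ theorem), and $\tilde n_2(8,22)$ for the $9$-dimensional case, then bound $\gamma(\cM)$. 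In the best case this forces projectivity.
\item \emph{Forbidden weights.} With these bounds in hand, Lemma~\ref{lemma_residual_SO_code_bound} excludes many singly-even weights and Lemma~\ref{lemma_residual_SO_large_weight} excludes weights $n-1$ and $n-2$.
\end{itemize}

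\textbf{Candidates for $C_4$.} Next, pin down the doubly-even subcode $C_4$:
\begin{itemize}
\item for $[90,8,42]_2$, a $4$-divisible $[\le 90,7,44]_2$ code;
\item for $[97,8,46]_2$, a $4$-divisible $[\le 97,7,48]_2$ code (Griesmer length $94$);
\item for $[52,9,22]_2$, a $4$-divisible $[\le 52,8,24]_2$ code.
\end{itemize}
The residual code argument (Lemma~\ref{lemma_residual_code_bound}) combined with the MacWilliams identities should leave only a short list of weight distributions. In the Griesmer-close cases, Theorem~\ref{thm_griesmer_divisible} and the Solomon--Stiffler characterization give the structure of $C_4$ explicitly as $\chi_V$ minus a few subspaces. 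Where the weight set is then very restricted, I would first try the linear programming method, including integer rounding cuts and Proposition~\ref{prop_div_one_more} for the count $T$ of doubly-even codewords, to kill cases by hand.

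\textbf{Computer search.} For each surviving candidate $C_4$, which is classified up to equivalence either via the Griesmer structure or by a small \texttt{LinCode} enumeration of $4$-divisible codes, I would use \texttt{LinCode} to search for all one-dimensional extensions. The extension must satisfy the mod-$4$ condition relative to a fixed point $P$ described before Lemma~\ref{lemma_no_65_8_30_SO}, and its codewords must have weights only in the allowed set. An empty search in every case proves the lemma.

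\textbf{Main obstacle.} The hardest step is expected to be the $[52,9,22]_2$ case. There $C_4$ is a $4$-divisible $[52,8,24]_2$ code, which is far from the Griesmer bound ($g_2(8,24)=48$), so its classification is not forced by Solomon--Stiffler arguments and may have many equivalence classes. I would first try to shrink this space: use the projection bound to force projectivity, then use residual codes of weight-$24$ words, which are even $[28,7,\ge 10]$ codes, to restrict the weight distribution of $C_4$ before enumerating.
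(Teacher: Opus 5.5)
Your plan is essentially the paper's proof. The paper classifies with \texttt{LinCode} all $4$-divisible full-length candidates for the doubly-even subcode $C_4$: three $[90,7,44]_2$ codes, one $[96,7,48]_2$ and one $[97,7,48]_2$ code, and one $[51,8,24]_2$ and two $[52,8,24]_2$ codes. It then checks that none of them extends to the full SO code, and it uses none of your preliminary reductions. A few numbers in your sketch are off, though harmlessly: $g_2(7,48)=96$, $g_2(8,24)=50$ (so the $[52,9,22]_2$ case has only three candidates for $C_4$), and $\tilde n_2(7,42)=88$, so projection through points only gives $\gamma(\cM)\le 2$ in all three cases, never projectivity.
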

\begin{proof}
  Using \texttt{LinCode} \cite{bouyukliev2021computer} we have classified all
  candidates for the subcode of the doubly-even codewords and checked that they
  cannot be extended to the full SO code. The parameters and the number of non-equivalent codes are stated in Table~\ref{table_subcode_enumeration}.
\end{proof}

\begin{table}[htp!]
  \begin{center}
    \begin{tabular}{rrrrrr}
      \hline
      $[n,k-1,d+2]_2$          & $[90,7,44]_2$ & $[96,7,48]_2$ & $[97,7,48]_2$ & $[120,7,60]_2$ & $[121,7,60]_2$ \\
      \#\text{ codes}          & 6 & 1 &   & 1 &   \\
      \#\text{ $4$-div. codes} & 3 & 1 & 1 & 1 & 6 \\
      \hline
      $[n,k-1,d+2]_2$          & $[51,8,24]_2$ & $[52,8,24]_2$ \\
      \#\text{ codes}          & 1 & 8 \\
      \#\text{ $4$-div. codes} & 1 &2 \\
      \hline
    \end{tabular}
    \caption{Enumeration of optimal full-length codes.}
    \label{table_subcode_enumeration}
  \end{center}
\end{table}

The non-existence of $[52,9,22]_2$ SO codes was shown in \cite{shimizu2026selforthogonal}. \texttt{LinCode} needs six hours for $[90,8,42]_2$ and five minutes for
$[97,8,46]_2$. Checking possible extensions of the six doubly-even full-length $[121,7,60]_2$ codes using \texttt{LinCode} requires 88~hours of computation time. For the unique $[120,7,60]_2$ code we have aborted the computation
after 12~days and a memory usage of more than 30~GB. So we pursue a different approach based on the split weight enumerator.\footnote{Alternatively, one may use any linear constraint solver that exploits the $13872660480$ automorphisms of the $[120,7,60]_2$ code.}

\begin{lemma}
  \label{lemma_no_121_8_58_SO}
  There exists no $[121,8,58]_2$ SO code.
\end{lemma}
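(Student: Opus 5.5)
Assume $C$ is a $[121,8,58]_2$ SO code with associated multiset $\cM$ in $\PG(7,2)$. The plan follows Lemma~\ref{lemma_no_SO_code_via_weight_restrictions2}, but the situation is less rigid, so the split weight enumerator w.r.t.\ a suitable codeword is added on top.

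\textbf{Step 1 (reductions).} By Lemma~\ref{lemma_no_SO_code_via_weight_restrictions2} there is no $[120,8,58]_2$ SO code, so $C$ has full length. Since $\tilde{n}_2(7,58)$ is known and too large to accommodate a projection through a point of multiplicity at least $2$, we get $\gamma(\cM)\le 1$, i.e.\ $C$ is projective. Lemma~\ref{lemma_residual_SO_large_weight} then excludes weights $120$ and $119$.

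\textbf{Step 2 (weights and doubly-even subcode).} The doubly-even subcode $C_4$ is a $4$-divisible $[\le 121,7,60]_2$ code. Thus $C_4$ is either the unique $[120,7,60]_2$ code (with one zero coordinate, weight distribution $60^{112}64^{15}$) or one of the six $4$-divisible full-length $[121,7,60]_2$ codes from Table~\ref{table_subcode_enumeration}. The residual SO code argument of Lemma~\ref{lemma_residual_SO_code_bound} applies to each singly-even weight $w\equiv 2\pmod 4$ with $w<116$: the residual is a $[121-w,7,59-w/2]_2$ code, and comparing with $n_2(7,\cdot)$ excludes most such $w$ above $58$. I expect the surviving singly-even weights to be $\{58,62\}$ and perhaps $66$ or $70$; any survivors other than these should be killed by residual arguments similar to the $[50,7,24]_2$ step in Lemma~\ref{lemma_no_SO_code_via_weight_restrictions2}. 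Doubly-even weights are those of $C_4$, essentially $\{60,64\}$ plus a few larger weights that the residual code argument also eliminates.

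\textbf{Step 3 (counting).} Plug the restricted weight set into the MacWilliams identities (\ref{eq_mw1})--(\ref{eq_mw4}), using $B_1=B_2=0$ by projectivity, together with the fixed counts $A_{60},A_{64}$ coming from each candidate $C_4$. Whenever this is not yet contradictory, pick the codeword $c$ used by the residual argument: the weight-$64$ codeword, or a minimum weight codeword whose residual is a $[63,7,30]_2$ code that must be even but cannot be SO. Then set up the split weight enumerator w.r.t.\ $\supp(c)$ and its complement. Geometrically this is $\cM$ restricted to the hyperplane $H$ and to $\overline{H}$. Use the first few split MacWilliams identities, the weight restrictions on both the restricted code and the residual code, and the line condition on $B_3$ noted in Subsection~\ref{subsec_geometric_notation}, and solve the resulting LP, with integer rounding cuts if needed.

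\textbf{Main obstacle.} The hardest case is $C_4$ equal to the $[120,7,60]_2$ code, where \texttt{LinCode} failed. I expect the plain weight-enumerator LP to stay feasible there, so the contradiction must come from the split enumerator. The first attempt will split w.r.t.\ a codeword of weight $64$ in $C_4$; if that does not suffice, the next attempt will split w.r.t.\ the zero coordinate, i.e.\ the unique extra point of $\cM$ outside the support of $C_4$.
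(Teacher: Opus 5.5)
\textbf{Gap in Step 1.} You carried over the projectivity argument from Lemma~\ref{lemma_no_SO_code_via_weight_restrictions2}, but it does not survive the change of length from $120$ to $121$. Projecting $\cM$ through a point $P$ gives a $[121-\cM(P),7,58]_2$ SO code. Since $\tilde{n}_2(7,58)=\tilde{g}_2(7,58)+1=119$, and $[119,7,58]_2$ SO codes with duplicated columns do exist (see the remark after this lemma), you only get $\gamma(\cM)\le 2$. So $B_2=0$ is not available. This is not cosmetic. In the paper's proof the last surviving candidate weight distributions of $C$ have $B_2=5$ and $B_2=2$. One branch is eliminated precisely by comparing $B_2(C)=5$ with $B_2(C_4)=3$. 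Any LP contradiction you obtain with $B_2=0$ only rules out projective codes.

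\textbf{Step 2 and the LP.} Your expected weight set is wrong, so the LP you plan would be over-constrained, and it lacks the input that actually makes the split enumerator work.
\begin{itemize}
\item Lemma~\ref{lemma_residual_SO_code_bound} removes only $66,82,90,98,106,110,114$. Since $[51,7,24]_2$, $[47,7,22]_2$, $[43,7,20]_2$, $[35,7,16]_2$, $[27,7,12]_2$ and $[19,7,8]_2$ codes exist, the singly-even candidates are $\{58,62,70,74,78,86,94,102\}$.
\item Weight $118\ge 2d$ is not covered by that lemma at all. The paper excludes it separately, via the $6$-dimensional subcode shortened onto $\supp(c)$.
\item The doubly-even weights are those of the seven classified $C_4$, i.e.\ $\{60,64,68,72,76\}$. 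They cannot be removed by residual arguments: weights $70,74,76$ survive until the very last step of the paper's proof.
\end{itemize}
The missing input is the list of enumerated weight distributions of all possible residual codes $\Res(C;c)$ for the surviving singly-even weights (e.g.\ the $2$ even $[63,7,30]_2$ and $143$ even $[59,7,28]_2$ codes). These are prescribed pairwise with those of $C_4$. With them, the first split LP w.r.t.\ a weight-$58$ codeword already excludes $A_{60}\in\{104,105,112\}$. So the $[120,7,60]_2$ case you call hardest is not the bottleneck. An iterated chain of split LPs then finishes: weights $94,86,78$; then $58$; then $70/74$, forcing $A_{60}=108$; $58$ again; finally $62$. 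Splitting w.r.t.\ a weight-$64$ codeword or a zero coordinate of $C_4$, without such residual information, is not shown to yield a contradiction.
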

\begin{proof}
  Let $C$ be a $[121,8,58]_2$ SO code and $\cM$ be its corresponding multiset of points. From Lemma~\ref{lemma_no_SO_code_via_weight_restrictions2} we conclude that $C$ has full length. Since $\tilde{n}_2(7,58)=119$, we have $\gamma(\cM)\le 2$.
  If $c\in C$ is a codeword of weight $118$, then the residual
  code is a full-length $[3,k',2]_2$ code, so that $k'=2$. The subcode of $C$ shortened onto $\supp(c)$ has dimension $6$ and its non-zero codewords have weights contained in $\{58,60\}$. Since the codewords of weight $60$ span a subcode of dimension at most $3$, we
  obtain a contradiction.
  From the residual code argument in Lemma~\ref{lemma_residual_code_bound} we conclude that the weights of the non-zero singly-even codewords are contained in
  $$
    \{\underline{58},62,70,74,78,86,94,102\}.
  $$
  Since no $[121,8,60]_2$ code exists weight $58$ has to occur.
  By $C_4$ we denote the subcode consisting of the doubly-even codewords of $C$ and by $\cM_4$ the corresponding multiset of points.
  Using \texttt{LinCode} we have exhaustively enumerated all doubly-even $[\le 121,7,60]_2$ codes. Their weight distributions and some further data are given as follows:
  \begin{center}
    \begin{tabular}{llrr}
    \hline
    $n$ & w.e. & $\#\operatorname{Aut}$ & $B_2$ \\
    \hline
    $120$\footnote{A $[120,7,60]_2$ Griesmer code has to be of Solomon--Stiffler type $[1;0,0,1,0,0,0]$.}
          & $60^{112} 64^{15}$           & $13872660480$ &  $0$ \\
    \hline
    $121$ & $60^{108} 64^{15} 76^4$      & $483840$      & $15$ \\
          & $60^{104} 64^{15} 68^8$      & $258048$      &  $7$ \\
          & $60^{96}  64^{31}$           & $884736$      &  $3$ \\
          & $60^{98}  64^{27} 68^2$      & $258048$      &  $4$ \\
          & $60^{105} 64^{14} 68^7 72^1$ & $225792$      &  $8$ \\
          & $60^{100} 64^{23} 68^4$      & $737280$      &  $5$ \\
    \hline
    \end{tabular}
  \end{center}
  Thus, the weights of the non-zero doubly-even codewords in $C$ are contained in
  $$
    \{\underline{60},64,68,72,76\}.
  $$
  Since no $[121,7,64]_2$ code exists weight $60$ has to occur.
  Using \texttt{LinCode} we have exhaustively enumerated the following candidates for the residual codes $\Res(C;c)$:
  \begin{center}
    \begin{tabular}{lrrrrrrr}
    \hline
    $\wt(c)$            & 58 & 62 & 70 & 74 & 78 & 86 & 94 \\
    $\Res(C;c)$         & $[63,7,30]_2$ & $[59,7,28]_2$ & $[51,7,24]_2$ & $[47,7,22]_2$ &
                          $[43,7,20]_2$ & $[35,7,16]_2$ & $[27,7,12]_2$\\
    $\# \text{ (even)}$ & 2 & 143 & 68 & 1 & 7 & 4 & 1 \\
    \hline
    \end{tabular}
  \end{center}
  Having the possible weight distributions of these residual codes at hand we iteratively apply the linear programming method for the split weight enumerator w.r.t.\ the partition given by the support of a codeword $c$ and its complement. Each time when we say that we apply the split weight enumerator for $\wt(c)$ we consider all pairs of weight distributions for $C_4$, $\Res(C;c)$ and prescribe them. We then state the conclusions that we draw from the respective set of infeasible linear programs.

  The split weight enumerator for $\wt(c)=58$ excludes the weights $\{72,102\}$ for $C$
  and the cases $A_{60}\in\{104,105,112\}$ for $C$, $C_4$. Applying the split weight enumerator for weights $94$, $86$, $78$ excludes these weights. Applying the split weight enumerator for $\wt(c)=58$ again, yields $A_{70}+A_{74}\ge 1$. Assuming a codeword of weight $70$ yields $A_{60}=108$ via the corresponding split weight enumerator. Assuming a codeword of weight $74$, but no codeword of weight $70$, yields $A_{60}\in\{96,108\}$ via the corresponding split weight enumerator. Moreover,
  assuming $A_{60}=96$ yields the unique weight distribution $58^{108} 60^{96} 64^{31} 74^{20}$. Here we have $B_2=5$, i.e. $\cM$ contains exactly five points of multiplicity two. However, $B_2(C_4)=3$ (for weight distribution $60^{96} 64^{31}$) implies that $\cM_4$ contains exactly three points of multiplicity two -- contradiction. Thus, we have $A_{60}=108$ and the weight distribution of $C_4$ is
  uniquely specified. Using this information and applying the split weight enumerator for $\wt(c)=58$ again, yields the two candidates $58^{96} 60^{108} 62^{16} 64^{15} 74^{16} 76^4$ and $58^{92} 60^{108} 62^{20} 64^{15} 70^4 74^{12} 76^4$ for the weight distribution of $C$. Moreover, we have $B_2=5$ and $B_2=2$, respectively, i.e.\ $\cM$ contains at least two points of multiplicity $2$. Applying the split weight enumerator for
  $\wt(c)=62$ yields the final contradiction.\footnote{For the last step it actually suffices to know that the $[59,7,28]_2$ codes are doubly-even.}
\end{proof}
Note that we have not used the computer-aided classified residual and subcodes explicitly, but only used lists of possible weight distributions. It seems very likely that one can iteratively apply the linear programming method for the split weight enumeration with partitions of reasonable size to obtain a \textit{computer-free} proof. This should also be the case for the two parameter sets in Lemma~\ref{lemma_SO_non_existence_LinCode} with $k=8$.\footnote{Proofs that do not use exhaustive computer enumerations of codes were found with AI tools e.g.\ for the cases $[97,8,46]_2$ and $[90,8,42]_2$. They use rational certificates and computer-generated complete lists of candidate integer weight distributions. The proofs have been checked by independent AI reviews and exact computer checks, but not yet by a human expert. They will be made available, with the checking scripts, in the GitHub repository of Hirotaka Shimizu at \url{https://github.com/yygotm?tab=repositories}.}

We remark that there exist exactly six non-equivalent $[119,7,58]_2$ SO codes with maximum weight $76$. Five of them violate $A_{66}=0$ or $A_{62}\le 20$. The generator matrix
$$
  \left(\begin{smallmatrix}
11000011001111000111100000000111100001111111100001111111100000000000000001111111100000000111111111111111100001011100000\\
00110000111111000000011110000000011111111111100000000000011111111000000000000000011111111111111111111111100000111010000\\
00001111110011000000000001111111111110000111100000000000000000000111111111111111111111111000000001111111100001101001000\\
00000000000000100000000000000000000000000000011001111111111111111111111111111111111111111111111111111111111110000000100\\
00000000000000010111111111111111111111111111100110000111100001111000011110000111100001111000011110000111111110000000010\\
11111111111111001001100110011001100110011001111110011001100110011001100110011001100110011001100110011001100110000000001\\
11010101010100101010001010101010111010101011101010101010111010001000111010100011101010101011101000101010101011010000000\\
  \end{smallmatrix}\right)
$$
yields a $[119,7,58]_2$ SO code with weight distribution $58^{52} 60^{56} 62^8 64^7   74^4$ and an automorphism group of order $32256$. Interestingly enough, it contains $14$ duplicated columns. In Table~\ref{table_SO_enumeration} we state the number of $[n,k,58]_2$ SO codes with non-zero weights in $\{58,60,62,64,68,70,72,74,76,78,$ $84,86,94\}$ for moderate parameters. The \texttt{LinCode} computation took 22~hours for the stated values with $n\le 118$.

\begin{table}[htp]
  \begin{center}
    \begin{tabular}{rrrrrrrrrrrrrrr}
    \hline
    \textbf{k/n} & \textbf{105} & \textbf{106} & \textbf{107} & \textbf{108} & \textbf{109} & \textbf{110} & \textbf{111} & \textbf{112} & \textbf{113}& \textbf{114}& \textbf{115}& \textbf{116}& \textbf{117}& \textbf{118}\\
    \textbf{3} &5&10&18&29&49&73&110&157&220&294&393&495\\
    \textbf{4} &&&&&&0&2&10&47&195&701&2322&6943\\
    \textbf{5} &&&&&&&&&&0&2&38&754&17397\\
    \textbf{6} &&&&&&&&&&&&0&1&26\\
    \textbf{7} &&&&&&&&&&&&&&0\\
    \hline
    \end{tabular}
    \caption{Number of non-equivalent full-length $[n,k,58]_2$ SO codes with non-zero weights in $\{58,60,62,64,68,70,72,74,76,78,84,86,94\}$.}
    \label{table_SO_enumeration}
  \end{center}
\end{table}
\todo{Can more values for Table~\ref{table_SO_enumeration} be computed? Might be really hard.}

\begin{theorem}
  \label{main_thm}
  For even $d\ge 2$ we have
  $$
    \tilde{n}_2(8,d)=\left\{\begin{array}{rcl}
    \tilde{g}_2(8,d)+6 &:& d=2,\\
    \tilde{g}_2(8,d)+4 &:& d=4,\\
    \tilde{g}_2(8,d)+3 &:& d\in\{14,26,28,30,32,42,46,58,60\},\\
    \tilde{g}_2(8,d)+2 &:& d\in\{6,10,12,16,\dots,22,34,\dots,40,44,48,\dots,56,62\},\\
    \tilde{g}_2(8,d)+1 &:& d\in\{8,24,66,\dots,94,98,\dots,106,110,114,118,122,126\},\\
    \tilde{g}_2(8,d) &:& \text{otherwise}.
    \end{array}\right.
  $$
\end{theorem}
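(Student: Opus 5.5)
The proof is a case-by-case verification over even $d$ that matches a lower bound with a construction. First we reduce to a finite range. By Lemma~\ref{lemma_griesmer_code_implies_SO}, $\tilde n_2(8,d)=\tilde g_2(8,d)$ holds in two cases: for doubly-even $d$ as soon as a Griesmer $[g_2(8,d),8,d]_2$ code exists, and for singly-even $d>2^7$ as soon as a Griesmer code with minimum distance $d+2$ exists. Binary Griesmer codes of dimension $8$ exist for all $d$ beyond the range tabulated in \cite{grassl_codetables}, in particular via the Solomon--Stiffler construction for $d>2^7$. Together with Lemma~\ref{lemma_griesmer_observations}.(2), this means only $d\le 126+2^7$ needs treatment, and in practice only $d\le 128$ or so. The cases $d=2$ and $d=4$ are handled as in lower dimensions: $d=2$ gives $\tilde n_2(8,2)=16=2k$, and for $d=4$ the bound $n\ge 2k$ combined with a known SO $[16,8,4]_2$ code (a direct sum of two extended Hamming codes) does the job.

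For the lower bounds in each exceptional class we proceed in the following order. First we use the Griesmer SO bound (\ref{ie_Griesmer_SO}). Next we use the coding lower bound $\tilde n_2(8,d)\ge n_2(8,d)$ with $n_2(8,\cdot)$ from \cite{bouyukliev2000smallest}; this covers most doubly-even entries such as $d\in\{8,12,16,20,24,28,32,36,40,44,48,52,56,60\}$, wherever $n_2$ already exceeds $\tilde g_2$ by the stated amount. For singly-even $d$ we then apply Corollary~\ref{cor_ex_non_existence_1} (the Lemma~\ref{lemma_non_existence_summary}.(1) list for $k=8$, covering $d\in\{6,10,14,18,22,26,34,38,66,\dots,126\}$ at the relevant lengths) and Corollary~\ref{cor_ex_non_existence_2} for $d\in\{50,54\}$. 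The remaining hard singly-even cases are closed by the dedicated lemmas: Lemma~\ref{lemma_no_65_8_30_SO} for $d=30$, Lemma~\ref{lemma_no_SO_code_via_weight_restrictions2} for $d\in\{46,58,62\}$, Lemma~\ref{lemma_SO_non_existence_LinCode} for $d\in\{42,46\}$, and Lemma~\ref{lemma_no_121_8_58_SO} for $d=58$. For each $d$ we check that the largest excluded length is exactly $\tilde g_2(8,d)+(\text{offset})-1$. We also check that monotonicity $\tilde n_2(8,d)\le\tilde n_2(8,d+2)-1$, obtained by puncturing away two coordinates with an SO-preserving point removal, does not give a contradictory value.

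For the upper bounds we assemble constructions. For $d\le 40$ or so we take the known SO codes from \cite{BouyuklievOstergaard2006Binary,Bouyukliev2006,shi2025binary}. For the many parameters listed in Corollary~\ref{cor_insert} we use Lemma~\ref{lemma_insert} with $\mathcal{RM}_2(1,7)$, and for those in Lemma~\ref{lemma_construction_aut} we use codes found with a prescribed automorphism group. For singly-even $d\le 128$ with the right $\varepsilon$-pattern we use Proposition~\ref{prop_construction_griesmer_plus_2_singly_even}. The SO $[124,8,60]_2$ code comes from adding an all-one word to a two-weight code (Lemma~\ref{lemma_non_projective_2wt_2}). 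Finally, Lemma~\ref{lemma_griesmer_code_implies_SO}.(3) gives the singly-even $d$ from doubly-even non-projective $[n+2,8,d+2]_2$ codes.

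The main obstacle is purely bookkeeping: checking, for each even $d$ in the finite range, that the lower bound and the construction agree with the offsets claimed in the theorem, without any gaps. The delicate entries are the offset-$3$ ones ($d\in\{14,26,28,30,32,42,46,58,60\}$). For these we must confirm that the non-existence lemmas exclude both $\tilde g_2+1$ and $\tilde g_2+2$; for example, for $d=58$ we need both Lemma~\ref{lemma_no_SO_code_via_weight_restrictions2} and Lemma~\ref{lemma_no_121_8_58_SO}, and for $d=46$ we need lengths $96$ and $97$. We must also confirm that a matching construction exists at length $\tilde g_2+3$, which is supplied by the listed $[82,8,38]_2$-type and $[113,8,54]_2$ codes and their analogues.
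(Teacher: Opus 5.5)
Your overall plan is the paper's, and the lower-bound half agrees with it entry by entry: the Griesmer SO bound, $\tilde n_2(8,d)\ge n_2(8,d)$ for the doubly-even entries (this also covers $68\le d\le 92$ and $d\in\{100,104\}$, which your list omits), Corollaries~\ref{cor_ex_non_existence_1} and~\ref{cor_ex_non_existence_2}, and Lemmas~\ref{lemma_no_65_8_30_SO}, \ref{lemma_no_SO_code_via_weight_restrictions2}, \ref{lemma_SO_non_existence_LinCode} and~\ref{lemma_no_121_8_58_SO}. Only the largest excluded length matters, because appending a zero coordinate preserves self-orthogonality. So for $d=46$ the length-$97$ result alone suffices. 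The genuine gap is in the construction half. An exact value needs an actual code at the claimed length, and for most even $d$ with $18\le d\le 52$ none of your tools provides one.

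Your sources for ``$d\le 40$ or so'' do not contain these codes. \cite{shi2025binary} settles only $k\le 7$, and in dimension $8$ the paper uses \cite{BouyuklievOstergaard2006Binary,pless1972classification} only for $d\le 16$. Your other tools do not reach this range either:
\begin{itemize}
\item Lemma~\ref{lemma_insert} with $\mathcal{RM}_2(1,7)$ only gives $d>64$.
\item Proposition~\ref{prop_construction_griesmer_plus_2_singly_even} only gives $d\in\{94,106,110,114,118,122,126\}$ when $k=8$.
\item Lemma~\ref{lemma_griesmer_code_implies_SO}.(1) needs a Griesmer code, but $n_2(8,d)>g_2(8,d)$ for all even $4\le d\le 62$; part (2) needs $d>2^7$; and for part (3) you exhibit no doubly-even non-projective input codes in this range.
\end{itemize}
In particular, your claim that the offset-$3$ entries are realized by ``the $[82,8,38]_2$-type and $[113,8,54]_2$ codes and their analogues'' is wrong. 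Those codes realize the offset-$2$ values $\tilde g_2(8,38)+2=82$ and $\tilde g_2(8,54)+2=113$. Nothing you list yields SO codes such as $[43,8,18]_2$, $[59,8,26]_2$, $[66,8,30]_2$, $[91,8,42]_2$, $[92,8,44]_2$, $[98,8,46]_2$, $[106,8,50]_2$ or $[108,8,52]_2$. The paper fills exactly this block with codes from \cite{li2026nonexistence} (even $18\le d\le 60$ except $38,42,46,54$) and \cite{an2025shortest} ($d\in\{42,46,94\}$). Your removal idea does work at $d=58$: deleting $2\chi_P$ at the point of multiplicity $4$ of the non-projective $4$-divisible $[124,8,60]_2$ code gives a $[122,8,58]_2$ SO code.

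Two side claims also need fixing:
\begin{itemize}
\item Drop the ``monotonicity'' $\tilde n_2(8,d)\le\tilde n_2(8,d+2)-1$. It fails at $d=2$, since $\tilde n_2(8,2)=\tilde n_2(8,4)=16$; removing $2\chi_P$ needs a point of multiplicity at least $2$.
\item Solomon--Stiffler alone does not give Griesmer codes for all $d>2^7$. For $d=144$ the $7$-, $6$- and $5$-space to be removed from $2\chi_V$ always share a $2$-space. The large-$d$ reduction therefore rests on the known values of $n_2(8,\cdot)$, as in the paper.
\end{itemize}
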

\begin{proof}
  Assume $d\ge 4$. Besides the Griesmer SO bound, the lower bounds are given by
  $n\ge 2k$ for $d=4$, $\tilde{n}_2(8,d)\ge n_2(8,d)$ for doubly-even $8\le d\le 60$, $68\le d\le 92$,  and $d\in\{100,104\}$,
  Lemma~\ref{lemma_no_65_8_30_SO} for $d=30$,
  Lemma~\ref{lemma_SO_non_existence_LinCode} for $d\in\{42,46\}$,
  Corollary~\ref{cor_ex_non_existence_2} for $d\in\{50,54\}$,
  Lemma~\ref{lemma_no_121_8_58_SO} for $d=58$,
  Lemma~\ref{lemma_no_SO_code_via_weight_restrictions2} for $d=62$,
  and Corollary~\ref{cor_ex_non_existence_1} for singly-even $6\le d\le 26$, $66\le d\le 126$, and $d\in\{34,38\}$.
  Constructions for $d\le 16$ can be found in \cite{BouyuklievOstergaard2006Binary} and \cite{pless1972classification}.
  Constructions for $d\in\{38,40,54,56\}\cup\{66,\dots,92\}$ in Lemma~\ref{lemma_construction_aut}, see Appendix~\ref{sec_generator_matrices_of_SO_codes} for the corresponding generator matrices.
  For constructions with $d\in\{d\equiv 0\pmod 2\,:\, 18\le d\le 60\}\backslash\{38,42,46,54\}$ we refer to \cite{li2026nonexistence}.
  Generator matrices for $d\in\{42,46,94\}$ can be found in \cite{an2025shortest}.
  For $d\in\{72,76,84,86,88,98,\dots,106,110,114,118,122,126\}$ Corollary~\ref{cor_insert} gives examples.
  For doubly-even $d\ge 108$, $d\in\{64,96\}$ and for singly-even $d\ge 130$ there exist Griesmer
  codes, so that we can apply Lemma~\ref{lemma_griesmer_code_implies_SO}.
\end{proof}
For even $d\ge 2$ we have $n_2(8,d)=g_2(8,d)+3$ for $d\in\{26,\dots,32\}\cup\{60\}$, $n_2(8,d)=g_2(8,d)+2$ for $d\in\{10,\dots,20\}\cup\{34,\dots,58\}$, $n_2(8,d)=g_2(8,d)+1$ for $d\in\{4,6,8,22,24,62\}\cup\{68,\dots,92\}\cup\{100,\dots,104\}$, and $n_2(8,d)=g_2(8,d)$ otherwise.

\section{Optimal binary self-orthogonal codes with dimension $\mathbf{9}$ and minimum distance at most $\mathbf{64}$}
\label{sec_opt_9}

In this section we partially determine $\tilde{n}_2(9,d)$ for $d\le 64$, see Theorem~\ref{thm_SO_k_9} and Corollary~\ref{cor_SO_equals_BLC}. We remark that for even $d\le 64$ the exact values $n_2(9,d)$ are known if $d\le 28$ or $d\in\{32,40,50,52,64\}$. For the construction of self-orthogonal $9$-dimensional binary codes the methods from Section~\ref{sec_opt_at_most_seven} and Section~\ref{sec_opt_8} suffice for our purpose. This is only partially true for the presented non-existence results, so that we have to enhance our toolbox. As a by-product we improve the previously best known lower bound for $n_2(9,44)$ in Theorem~\ref{thm_no_93_9_44} and show divisibility results for some putative optimal $[n,9,d]_2$ codes in Proposition~\ref{prop_opt_implies_divisibility}.

For some set $\mathcal{F}\subset\N$ we say that $C$ is a linear code with forbidden weights $\mathcal{F}$ if $\wt(c)\notin\mathcal{F}$ for all $c\in C$. Let $y_r$ correspond to $y\in C$ in $\Res(C;x)$. From Equation~(\ref{eq_weight_sum}) and Equation~(\ref{eq_weight_residual}) we conclude
\begin{equation}
  \label{eq_weight_residual2}
  \wt\!\left(y_r\right)=\frac{\wt(y)+\wt(x+y)-\wt(x)}{2}=\frac{\wt(x)+\big(\wt(y)-\wt(x)\big)+\big(\wt(x+y)-\wt(x)\big)}{2},
\end{equation}
which directly implies:
\begin{lemma}
  \label{lemma_min_div}
  Let $C$ be a $2^r$-divisible binary code and $y_r$ correspond to $y\in C$ in $\Res(C;x)$. If $\wt(x)\equiv 2^r{\pmod {2^{r+1}}}$ and $\wt\!\left(y_r\right)\equiv 0{\pmod {2^r}}$, then either $\wt(y)$ or $\wt(x+y)$ is congruent to $2^r$ modulo $2^{r+1}$.
\end{lemma}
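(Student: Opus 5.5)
The claim follows directly from Equation~(\ref{eq_weight_residual2}) by a congruence computation modulo $2^{r+1}$. Multiplying that identity by $2$ gives
$$
  2\wt\!\left(y_r\right)=\wt(y)+\wt(x+y)-\wt(x).
$$
We then reduce both sides modulo $2^{r+1}$ and use the two hypotheses.

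First, the hypothesis $\wt(y_r)\equiv 0\pmod{2^r}$ implies $2\wt(y_r)\equiv 0\pmod{2^{r+1}}$. Second, $\wt(x)\equiv 2^r\pmod{2^{r+1}}$. Substituting both gives
$$
  \wt(y)+\wt(x+y)\equiv 2^r\pmod{2^{r+1}}.
$$
Since $C$ is $2^r$-divisible and $y,x+y\in C$, each of $\wt(y)$ and $\wt(x+y)$ is congruent to either $0$ or $2^r$ modulo $2^{r+1}$.

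It remains to check the four combinations of residues. The sum is $\equiv 2^r$ exactly when one summand is $\equiv 2^r$ and the other is $\equiv 0$. If both were $\equiv 0$, the sum would be $\equiv 0$; if both were $\equiv 2^r$, the sum would be $2^{r+1}\equiv 0$. Both cases contradict the displayed congruence, so exactly one of $\wt(y)$, $\wt(x+y)$ is congruent to $2^r$ modulo $2^{r+1}$. This in particular gives the stated \emph{either/or}.

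The only point needing care is the degenerate case $y\in\{0,x\}$, where $y_r$ is the zero word. Then one of $\wt(y)$, $\wt(x+y)$ equals $\wt(x)\equiv 2^r$ and the other equals $0$, so the conclusion still holds. The identity in Equation~(\ref{eq_weight_residual2}) is valid here too, so no separate treatment is actually required. There is no real obstacle; this is a routine consequence of the displayed identity.
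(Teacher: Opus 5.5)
Your proof is correct and follows the paper's argument: the paper states the lemma as a direct consequence of Equation~(\ref{eq_weight_residual2}), and you carry out the reduction modulo $2^{r+1}$ explicitly, using $2^r$-divisibility to restrict the residues of $\wt(y)$ and $\wt(x+y)$ to $\{0,2^r\}$. Your case check also gives the slightly stronger conclusion that exactly one of the two weights is congruent to $2^r$ modulo $2^{r+1}$.
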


A codeword that is not even is called odd.
Applying Lemma~\ref{lemma_min_div} with $r=0$, Equation~(\ref{eq_weight_residual2}) implies the following improvement of the residual code argument from Lemma~\ref{lemma_residual_code_bound}.
\begin{lemma}
  \label{lemma_residual_code_bound_odd}
  Let $C$ be an $[n,k,d]_2$ code such that the weight of each odd codeword is at least $d'$. If $c\in C$ is a codeword with odd weight $w<2d$, then $\Res(C;c)$ is an $\left[n-w,k-1,\tfrac{d+d'-w}{2}\right]_2$ code.
\end{lemma}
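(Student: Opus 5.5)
Let $c\in C$ have odd weight $w<2d$. We bound the dimension and the minimum distance of $\Res(C;c)$ separately, the distance bound being the substantive part.

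\emph{Length and dimension.} The residual has length $n-w$ by definition. Since $w<2d=\tfrac{dq}{q-1}$ for $q=2$, the remark before Lemma~\ref{lemma_residual_code_bound} gives dimension exactly $k-1$. Concretely, the only non-zero codeword that vanishes on $[n]\backslash\supp(c)$ is $c$ itself. Any other codeword $y$ with $\supp(y)\subseteq\supp(c)$ would make $x+y$ have weight $w-\wt(y)\le w-d<d$, so $x+y$ could only be zero.

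\emph{Minimum distance.} Take $y\in C\backslash\{0,c\}$ and write $y_r$ for its image in $\Res(C;c)$. By Equation~(\ref{eq_weight_residual2}),
\[
  \wt(y_r)=\frac{\wt(y)+\wt(c+y)-w}{2}.
\]
Since $w$ is odd, Equation~(\ref{eq_weight_sum}) shows that $\wt(y)+\wt(c+y)\equiv w\equiv 1\pmod 2$. Hence exactly one of $y$ and $c+y$ is odd; this is the case $r=0$ of Lemma~\ref{lemma_min_div}.

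The odd one of the two has weight at least $d'$ by hypothesis. The other one is non-zero, because $y\neq 0$ and $y\neq c$, so its weight is at least $d$. Therefore $\wt(y)+\wt(c+y)\ge d+d'$, and so $\wt(y_r)\ge \tfrac{d+d'-w}{2}$.

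This bound is an integer. Indeed, $\wt(y_r)$ is an integer, so $\wt(y)+\wt(c+y)-w$ is even. Also $d+d'-w$ is even whenever $d'$ has the parity opposite to $d$, which is the intended use, with $d'$ odd and $d$ even. Otherwise the bound is to be read with a ceiling, and it still holds.

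\emph{Main obstacle.} The only delicate point is making sure $c+y$ is not the zero word. This is exactly why $y\neq c$ is required, and it is guaranteed because the dimension is exactly $k-1$, as shown above.
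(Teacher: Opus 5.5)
Your proof is correct and follows the paper's own route. The paper obtains the lemma directly from Equation~(\ref{eq_weight_residual2}) together with Lemma~\ref{lemma_min_div} for $r=0$ (exactly one of $y$ and $c+y$ is odd, so $\wt(y)+\wt(c+y)\ge d+d'$), which is your argument with your dimension count and integrality remark as added detail; the only slip is cosmetic, since in the dimension paragraph you write $x+y$ where you mean $c+y$.
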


Gaps in the weight spectrum can sometimes be exploited by a combination of Lemma~\ref{lemma_min_div} and Equation~(\ref{eq_weight_residual2}) to exclude further weights.
\begin{lemma}
  \label{lemma_no_weight_46_in_93_9_44}
  An even $[93,9,44]_2$ code does not contain a codeword of weight $46$.
\end{lemma}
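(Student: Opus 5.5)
The plan is to argue by contradiction. Let $C$ be an even $[93,9,44]_2$ code and $x\in C$ a codeword with $\wt(x)=46$. Since $46<88=2d$, Lemma~\ref{lemma_residual_code_bound} applies, and $R:=\Res(C;x)$ is a $[47,8,\ge 21]_2$ code. By Equation~(\ref{eq_weight_residual2}), for $y\notin\{0,x\}$ the image $y_r\in R$ has weight
$$
  \wt(y_r)=\frac{\wt(y)+\wt(x+y)-46}{2}.
$$
So the weight of $y_r$ is determined by the unordered pair $\{\wt(y),\wt(x+y)\}$ of even weights, each at least $44$. In particular, $\wt(y_r)=21$ forces $\wt(y)=\wt(x+y)=44$, and $\wt(y_r)=22$ forces $\{\wt(y),\wt(x+y)\}=\{44,46\}$. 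Since $C$ is even and $46\equiv 2\pmod 4$, Lemma~\ref{lemma_min_div} with $r=1$ adds a parity relation: if $\wt(y_r)$ is even, then exactly one of $\wt(y),\wt(x+y)$ is $\equiv 2\pmod 4$; if $\wt(y_r)$ is odd, then both are in the same class mod~$4$. This is the gap structure I intend to exploit.

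The first step is to pin down $R$. We have $g_2(8,21)=46$ and $n_2(8,22)=48$, so a $[47,8,\ge 21]_2$ code has minimum distance exactly $21$ and is only one above the Griesmer bound. Its even-weight subcode $R_e$ is a $[47,7,\ge 22]_2$ code, since the even codewords form a subcode of dimension $7$ or $8$ and $47<n_2(8,22)$ rules out dimension $8$. Next I would compare the mod-$4$ classes of $C$ with the parity of $R$. The codewords $y\in C$ with $\wt(y)\equiv 0\pmod 4$ form a subcode $C_4$ of codimension at most one; $x\notin C_4$, and $C_4$ is a $[93,8,\ge 44]_2$ code. By the parity relation above, the odd codewords of $R$ come exactly from pairs $\{y,x+y\}$ lying in the same mod-$4$ class. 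This forces $R_e$ to be the image of $C_4$ (or of its complement class), so I get a correspondence between the weight distributions of $C_4$, $R_e$ and $R$.

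The second step is the counting argument. Set up the split weight enumerator of $C$ with respect to the partition $\supp(x)$ versus its complement. The variables are indexed by the pair (weight on $\supp(x)$, weight off $\supp(x)$). The constraints are:
\begin{itemize}
  \item[(a)] the MacWilliams identities of $C$, with only even weights $\ge 44$ allowed, and of $R$, with weights $\ge 21$ and $B_1(R)=0$ if $R$ has full length;
  \item[(b)] the residual relation $\wt(y_r)=\wt(y)-|\supp(x)\cap\supp(y)|$;
  \item[(c)] the mod-$4$ parity constraints from Lemma~\ref{lemma_min_div};
  \item[(d)] forbidden large weights obtained from the residual code argument (Lemma~\ref{lemma_residual_code_bound}) applied to $C$ with $n_2(8,\cdot)$.
\end{itemize}
I would first run the ordinary weight-distribution LP for $R$ with these restrictions. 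In particular, every weight-$21$ codeword of $R$ must come from two weight-$44$ codewords $y,x+y$, and large residual weights are bounded via $n-w<n_2(8,\cdot)$. The goal is to show infeasibility, or to narrow $R$ to a few weight distributions which the split weight enumerator for $x$ then excludes.

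The main obstacle is that a $[47,8,21]_2$ code on its own is not rare, so the plain residual LP will probably not suffice. The contradiction must come from coupling the weights of $C$ (including codewords of weight $46$ other than $x$, and the weight classes mod~$4$) with those of $R$. If the LP remains feasible, my first fallback is to iterate: use the surviving weight distributions for $R$ to forbid further weights in $C$, then reapply the split weight enumerator. As a last resort, I would use a computer classification of the even $[48,8,22]_2$ extensions of $R$ to supply the exact list of candidate weight distributions.
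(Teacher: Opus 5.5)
Your setup is sound as far as it goes. $R=\Res(C;x)$ is a $[47,8,21]_2$ code, and Lemma~\ref{lemma_min_div} with $r=1$ gives the correct mod-$4$ pairing rule. The gap is that the proposal never reaches a contradiction. You set up a split-weight-enumerator LP relative to $\supp(x)$ and hope it is infeasible, but nothing indicates that an LP relative to $x$ alone can see the obstruction. The decisive idea is missing.

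In the paper, $R$ is classified by computer. There is a unique $[47,8,21]_2$ code, so your remark that such codes are not rare is mistaken. Its weight distribution is $21^{66}22^{78}23^{30}24^{30}29^{30}30^{18}31^{2}32^{1}$. The argument then runs as follows:
\begin{itemize}
\item Take $y$ with $\wt(y_r)=32$. Then $\wt(y)+\wt(x+y)=110$, and since $32$ is even, exactly one of the two weights is $\equiv 2\pmod 4$.
\item The residual code argument restricts the weights of $C$ to $\{44,46,48,56,58,60,64,72,74,76,80,84,88,90,92\}$. Among the candidates $46+64$, $58+52$, $74+36$, $90+20$, only $\{\wt(y),\wt(x+y)\}=\{46,64\}$ survives.
\item Now pass to a \emph{second} residual, $\Res(C;x+y)$. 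This is a $[29,8,12]_2$ code in which $x$ becomes a word of weight $(46+46-64)/2=14$.
\item All $73$ such codes are doubly-even, again by classification, which gives the contradiction.
\end{itemize}
Your plan lacks two things: switching to the residual with respect to a different codeword, and the divisibility of the $[29,8,12]_2$ codes. Your last-resort classification of extensions of $R$ would essentially recover the paper's first computer step, but without the second residual there is still no endgame.

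Your first step also contains an error. For a code that is merely even, the doubly-even codewords need not form a subcode of codimension at most one. By Proposition~\ref{prop_SO_characterization}(B), that property is equivalent to $C$ being self-orthogonal. Suppose it held here. Then $x\notin C_4$ forces every pair $\{y,x+y\}$ to split between the two mod-$4$ classes. By your own parity rule, every word of $R$ would then have even weight, so $R$ would be an even $[47,8,22]_2$ code. This contradicts $n_2(8,22)=48$; it is just Lemma~\ref{lemma_residual_SO_code_bound}, and the lemma would be trivial. The whole difficulty is that $C$ is only assumed even. Moreover, even in that hypothetical case, $C_4$ would map injectively onto all of $R$, since the kernel of $y\mapsto y_r$ is $\{0,x\}$. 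So your claim that $R_e$ is the image of $C_4$ is wrong as well.
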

\begin{proof}
  Let $C$ be an even $[93,9,44]_2$ code. The residual code argument in Lemma~\ref{lemma_residual_code_bound} gives that the weights of the non-zero codewords are contained in $$\{44,46,48,56,58,60,64,72,74,76,80,84,88,90,92\}.$$ Assume that $x$ is a codeword of weight $46$. Using \texttt{LinCode} we obtain that
  $\Res(C;x)$ is a $[47,8,21]_2$ code with weight distribution $21^{66}   22^{78}   23^{30}   24^{30}   29^{30}   30^{18}   31^{2}   32^{1}$.\footnote{Up to equivalence there is a unique $[47,8,21]_2$ code.} By applying Lemma~\ref{lemma_min_div} with $y_r\in \Res(C;x)$ of weight $32$ we can assume that $\wt(y)\equiv 2\pmod 4$. Using Equation~(\ref{eq_weight_residual2}) we compute
  \begin{center}
    \begin{tabular}{rrrrr}
      \hline
      $\wt(y)$   & 46 & 58 & 74 & 90 \\
      $\wt(x+y)$ & 64 & 52 & 36 & 20 \\
      \hline
    \end{tabular}
  \end{center}
  so that $\wt(x+y)=64$. In $\Res(C;x+y)$ the codewords $x,y\in C$ correspond to a residual codeword of weight $14$. However, using  \texttt{LinCode} we verify that a $[29,8,12]_2$ residual code $\Res(C;x+y)$ is doubly-even.\footnote{There are $73$ non-equivalent $[29,8,12]_2$ codes, which are all doubly-even and have a maximum weight of at most $28$.}
\end{proof}

Our next tool is a divisibility criterion for binary non-Griesmer codes, see Lemma~\ref{lemma_divisible_row_span}.
\begin{theorem} (\cite[Theorem 2.6]{dodunekov1985improvement}\footnote{In \cite{dodunekov1990comment} the statement was generalized to arbitrary field sizes, cf.\ \cite[Corollary 6]{Ward1998}.
})
  \label{thm_gen_mat_plus_t_q_2}
  If $C$ is an $[n,k,d]_2$-code with $n=g_2(k,d)+t$, then $C$ admits a generator matrix $G$ where every row has a weight between $d$ and $d+t$.
\end{theorem}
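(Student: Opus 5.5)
We prove the statement by induction on $k$, choosing the rows of $G$ greedily: we pick codewords of small weight that are linearly independent and show each has weight at most $d+t$. The key tool is the residual code bound of Lemma~\ref{lemma_residual_code_bound} combined with the Griesmer bound~(\ref{eq_griesmer_bound}). For $k=1$ the code is spanned by a single codeword of weight $w\ge d$ with $n=w=d+t$, so the claim holds.

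For the inductive step, first assume $C$ is of full length, which we may since zero coordinates only increase $t$. Let $c_1$ be a codeword of minimum weight $d$. Because $d<2d$, Lemma~\ref{lemma_residual_code_bound} shows that $\Res(C;c_1)$ is an $[n-d,k-1,\lceil d/2\rceil]_2$ code. Since $n-d=g_2(k,d)+t-d=g_2(k-1,\lceil d/2\rceil)+t$, this residual code has excess exactly $t$.

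By induction, $\Res(C;c_1)$ has a generator matrix whose rows $r_2,\dots,r_k$ have weights between $\lceil d/2\rceil$ and $\lceil d/2\rceil+t$. Each $r_i$ is the restriction of a codeword $y$ of $C$ to the complement of $\supp(c_1)$. Both $y$ and $y+c_1$ restrict to $r_i$, so we may choose the lift. Their weights add up to
$$\wt(y)+\wt(y+c_1)=2\wt(r_i)+d,$$
because the parts of the two words on $\supp(c_1)$ are complementary. Hence the lighter lift has weight at most $\wt(r_i)+d/2$, which is at most $\lceil d/2\rceil+t+\lfloor d/2\rfloor=d+t$. Lifts of linearly independent residual words together with $c_1$ are linearly independent, since their restrictions are independent and $c_1$ restricts to $0$. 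Each chosen lift also has weight at least $d$, as it is a nonzero codeword. So $c_1$ and the lighter lifts form the required generator matrix.

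The main obstacle is the parity in the averaging step. When $d$ is odd we need $\min\{\wt(y),\wt(y+c_1)\}\le \wt(r_i)+\lfloor d/2\rfloor$. This holds because the two weights are integers with sum $2\wt(r_i)+d$, so the smaller one is at most $\lfloor(2\wt(r_i)+d)/2\rfloor=\wt(r_i)+\lfloor d/2\rfloor$. It then gives the bound $\lceil d/2\rceil+t+\lfloor d/2\rfloor=d+t$.

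The other point to check is the identity $g_2(k,d)-d=g_2(k-1,\lceil d/2\rceil)$, which follows from $\lceil\lceil d/2\rceil/2^{i}\rceil=\lceil d/2^{i+1}\rceil$. This is what makes the excess $t$ pass exactly to the residual code.
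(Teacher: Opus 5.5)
Your proof is correct. The paper does not prove this statement; it quotes it from \cite[Theorem 2.6]{dodunekov1985improvement}, so there is no in-text argument to compare with. Your induction is the standard residual-code proof of the result. It rests on three facts: the residual of a minimum-weight codeword, the identity $g_2(k,d)-d=g_2(k-1,\lceil d/2\rceil)$, which passes the defect $t$ unchanged to $\Res(C;c_1)$, and the choice of the lighter of the two lifts $y$, $y+c_1$, whose weights sum to $2\wt(r_i)+d$. The same idea also yields the $q$-ary version mentioned in the footnote: averaging over the $q$ lifts $y+\lambda c_1$ gives $\min_\lambda \wt(y+\lambda c_1)\le \wt(r_i)+d-\lceil d/q\rceil\le d+t$.

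One bookkeeping point needs a line. Under the paper's convention, an $[n,k,d]_2$ code only has minimum distance \emph{at least} $d$. If you read the statement that way, your inductive step may have no codeword of weight exactly $d$ to use as $c_1$. If you read it with exact minimum distance instead, your induction hypothesis does not directly apply to $\Res(C;c_1)$, whose minimum distance can exceed $\lceil d/2\rceil$.

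In either case the fix is to run the step with the true minimum distance $d^*\ge d$. Then $n=g_2(k,d^*)+t^*$ with $0\le t^*\le t-(d^*-d)$. This holds because the $i=0$ term of the Griesmer sum grows by exactly $d^*-d$ and no other term decreases. Hence $[d^*,d^*+t^*]\subseteq[d,d+t]$. Your reduction to full length is harmless but unnecessary, since Lemma~\ref{lemma_residual_code_bound} does not require it.
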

If $n=g_2(k,d)+t$ we speak of a \emph{Griesmer plus $t$ code and call $t$ the \emph{Griesmer defect}.}
\begin{lemma}
  \label{lemma_divisible_row_span}
  Let $C$ be an $[n,k,d]_2$-code that admits a generator matrix $G$ where every row has a Hamming weight that is divisible by $2^r$ with $r\ge 1$.
  If each residual code w.r.t.\ a row of $G$ is $2^{r-1}$-divisible, then $C$
  is $2^r$-divisible.
\end{lemma}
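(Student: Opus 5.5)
We prove the claim by induction on the number of rows of $G$ that occur in a codeword. Every non-zero codeword of $C$ is a sum $g_{i_1}+\dots+g_{i_s}$ of pairwise different rows of $G$. So it is enough to show one step: if $y\in C$ has $\wt(y)\equiv 0\pmod{2^r}$ and $g_i$ is any row of $G$, then $\wt(y+g_i)\equiv 0\pmod{2^r}$. The base case $s=1$ holds because every row weight is divisible by $2^r$. Starting from $g_{i_1}$ and adding the remaining rows one at a time then gives the claim for every codeword.

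For the inductive step, fix the row $g_i$ and a codeword $y$, and set $a:=\wt(y)$ and $b:=\wt(g_i\star y)=|\supp(g_i)\cap\supp(y)|$. Let $y_r$ be the codeword of $\Res(C;g_i)$ corresponding to $y$. By Equation~(\ref{eq_weight_residual}) we have $\wt(y_r)=a-b$. This also covers the degenerate cases $y=0$ and $y=g_i$, where $y_r=0$. The hypothesis that $\Res(C;g_i)$ is $2^{r-1}$-divisible gives $a-b\equiv 0\pmod{2^{r-1}}$. Equation~(\ref{eq_weight_sum}) then yields
\[
  \wt(y+g_i)=\wt(g_i)+a-2b=\wt(g_i)-a+2(a-b)\equiv \wt(g_i)-a\pmod{2^r},
\]
since $2(a-b)\equiv 0\pmod{2^r}$. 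Because $\wt(g_i)\equiv 0\pmod{2^r}$, this shows $\wt(y+g_i)\equiv -\wt(y)\pmod{2^r}$. In particular $2^r$-divisibility of $\wt(y)$ carries over to $\wt(y+g_i)$, which is the step we need.

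There is no serious obstacle here. The only point to check is that the residual hypothesis applies to the residual of every codeword $y$ with respect to each row $g_i$, and not only to minimum-weight codewords or residuals of full dimension. This holds because $\Res(C;g_i)$ is by definition the restriction of all of $C$ to the complement of $\supp(g_i)$. Hence the divisibility assumption on $\Res(C;g_i)$ gives $\wt(y_r)\equiv 0\pmod{2^{r-1}}$ for every $y\in C$, including the zero residual codewords.
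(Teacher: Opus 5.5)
Your proof is correct and is essentially the paper's argument: induction on the number of rows of $G$ summed into a codeword, where Equations~(\ref{eq_weight_sum}) and~(\ref{eq_weight_residual}), together with the $2^{r-1}$-divisibility of the residual code with respect to the added row, show that adding a row preserves divisibility of the weight by $2^r$. Your explicit congruence $\wt(y+g_i)\equiv-\wt(y)\pmod{2^r}$ writes out the step that the paper states in words.
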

\begin{proof}
  For two codewords $x,y\in C$ and $y_r\in\Res(C;x)$ being the residual of $y$ we have
  $$
    \wt(x+y)=\wt(x)+\wt(y)-2\cdot\left|\supp(x)\cap \supp(y)\right|
  $$
  and
  $$
    \wt\!\left(y_r\right)=\wt(y)- \left|\supp(x)\cap \supp(y)\right|,
  $$
  see Equation~(\ref{eq_weight_sum}) and Equation~(\ref{eq_weight_residual}), respectively. For each row $x$ of $G$ the weight is divisible by $2^r$
  and $\wt\!\left(y_r\right)$ is divisible by $2^{r-1}$. So, if the weight of
  $y$ is divisible by $2^r$, then also the weight of $x+y$ is divisible by
  $2^r$. Since each non-zero codeword can be written as a sum of pairwise
  different rows of $G$ we can prove the statement by induction on the
  number of summands.
\end{proof}

\begin{proposition}\label{prop_opt_implies_divisibility} $\,$
  \begin{itemize}[noitemsep]
   \item Each $[33,7,14]_2$ code with forbidden weight $15$ is $2$-divisible.
   \item Each $[49,8,22]_2$ code with forbidden weight $23$ is $2$-divisible.
   \item Each $[61,8,28]_2$ code with forbidden weights $\{29,30,31\}$ is $4$-       divisible.
   \item Each $[78,9,36]_2$ code is $4$-divisible.
   \item Each even $[93,9,44]_2$ code is $4$-divisible.
   \item Each $[117,9,56]_2$ code is $8$-divisible.
   \item Each $[125,9,60]_2$ code is $4$-divisible.
  \end{itemize}
 \end{proposition}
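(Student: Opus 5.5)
The approach for every item is the same: combine Theorem~\ref{thm_gen_mat_plus_t_q_2} with Lemma~\ref{lemma_divisible_row_span}. First compute the Griesmer defect $t=n-g_2(k,d)$ in each case:
\begin{itemize}[noitemsep]
  \item $g_2(7,14)=31$, so $t=2$;
  \item $g_2(8,22)=47$, so $t=2$;
  \item $g_2(8,28)=58$, so $t=3$;
  \item $g_2(9,36)=75$, so $t=3$;
  \item $g_2(9,44)=91$, so $t=2$;
  \item $g_2(9,56)=115$, so $t=2$;
  \item $g_2(9,60)=122$, so $t=3$.
\end{itemize}
By Theorem~\ref{thm_gen_mat_plus_t_q_2}, each code has a generator matrix whose row weights lie in $[d,d+t]$. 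The forbidden weights, or evenness, then force every row weight to be divisible by the target $2^r$:
\begin{itemize}[noitemsep]
  \item For $[33,7,14]_2$ the window is $\{14,15,16\}$ with $15$ forbidden.
  \item For $[49,8,22]_2$ the window is $\{22,23,24\}$ with $23$ forbidden.
  \item For $[61,8,28]_2$ only $28$ survives.
  \item For the even $[93,9,44]_2$ case, the window $\{44,45,46\}$ loses $45$ by evenness and $46$ by Lemma~\ref{lemma_no_weight_46_in_93_9_44}.
\end{itemize}
For $[78,9,36]_2$, $[117,9,56]_2$ and $[125,9,60]_2$ there is no a priori restriction. Here I would first exclude the offending weights $37,38,39$, respectively $57,58$, respectively $61,62,63$, by the residual code argument (Lemma~\ref{lemma_residual_code_bound}). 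A codeword of weight $w$ gives a residual $[n-w,k-1,d-\lceil w/2\rceil]_2$ code, and I expect $n-w<n_2(k-1,\cdot)$ from the known tables for $k-1\le 8$. For example, $w=37$ in $[78,9,36]_2$ gives a $[41,8,17]_2$ code, and I expect $n_2(8,17)\ge 42$. If some weight survives this check, I would instead use a minimum-weight codeword and its residual, which is a Griesmer code and hence divisible by Theorem~\ref{thm_griesmer_divisible}, and then adjust the generator matrix accordingly.

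Next I verify the residual hypothesis of Lemma~\ref{lemma_divisible_row_span} for each row $x$. The residual $\Res(C;x)$ has length $n-\wt(x)$, dimension $k-1$, and minimum distance $d-\wt(x)/2$, and I expect it to be a Griesmer code (or forced to be divisible). Concretely:
\begin{itemize}[noitemsep]
  \item For $[61,8,28]_2$ with $x$ of weight $28$, the residual is $[33,7,14]_2$, which is Griesmer plus $2$. I need it to be $2$-divisible, which would follow from the first bullet once it has no weight-$15$ codeword.
  \item For $[117,9,56]_2$, the residual of a weight-$56$ row is $[61,8,28]_2$, Griesmer plus $3$. It must be $4$-divisible, which would follow from the third bullet once weights $29$, $30$, $31$ are excluded.
  \item For $[78,9,36]_2$, the residual $[42,8,18]_2$ must be $2$-divisible.
  \item For $[125,9,60]_2$, the residual $[65,8,30]_2$ must be $2$-divisible.
  \item For $[93,9,44]_2$, the residual $[49,8,22]_2$ must be $2$-divisible.
  \item For $[33,7,14]_2$ and $[49,8,22]_2$ (target $r=1$), the residual only needs to be $1$-divisible, which is trivial. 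So these two bullets follow at once.
\end{itemize}
For rows of weight $d+2$, the residuals are Griesmer codes, e.g.\ $[31,6,16]_2$, so Theorem~\ref{thm_griesmer_divisible} applies.

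The main obstacle is that a residual code inherits forbidden weights from $C$ only indirectly. To show, for example, that $\Res(C;x)$ for $[93,9,44]_2$ has no odd codewords, I would use Equation~(\ref{eq_weight_residual2}): $\wt(y_r)=(\wt(y)+\wt(x+y)-\wt(x))/2$. Combined with the admissible weight sets of $C$ already obtained from the residual code argument (as in the proof of Lemma~\ref{lemma_no_weight_46_in_93_9_44}) and Lemma~\ref{lemma_min_div}, this should pin the parity, or the value modulo $4$, of $\wt(y_r)$. I would carry this out case by case, proving the bullets in the order listed, so that the earlier bullets feed the later ones: the first feeds the third, and the third feeds the sixth. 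Where this fails, I would fall back on the \texttt{LinCode} enumerations already used in the paper.
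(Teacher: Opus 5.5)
Your framework is the same as the paper's: Theorem~\ref{thm_gen_mat_plus_t_q_2} plus Lemma~\ref{lemma_divisible_row_span}, clearing the window $\{d,\dots,d+t\}$, transferring forbidden weights to residual codes via Equation~(\ref{eq_weight_residual2}), and chaining the bullets. However, there is a genuine hole in the first bullet, and the third and sixth bullets depend on it. In fact $g_2(7,14)=14+7+4+2+1+1+1=30$, so a $[33,7,14]_2$ code is Griesmer plus $3$, and the generator matrix may contain rows of weight $17$. Nothing in your plan removes these: the ordinary residual code argument only gives a $[16,6,6]_2$ residual, and such codes exist. The missing idea is the odd-weight refinement, Lemma~\ref{lemma_residual_code_bound_odd}. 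Because $15$ is forbidden, every odd codeword has weight at least $d'=17$, so a codeword of weight $17$ would have a $[16,6,7]_2$ residual. Such a code does not exist, since $n_2(6,7)=n_2(6,8)-1=17$. Your fallback does not help either: the residual of a minimum-weight codeword of a non-Griesmer code need not be Griesmer. Here it is a $[19,6,7]_2$ code, while $g_2(6,7)=16$.

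There are further slips that must be repaired.
\begin{itemize}
\item $g_2(9,56)=114$, so $[117,9,56]_2$ also has $t=3$, and weight $59$ must be excluded too. It can be, since its residual would be a nonexistent $[58,8,27]_2$ code. Your value for $g_2(9,36)$ is also wrong (it is $76$), but overestimating $t$ there is harmless.
\item For odd $w$ your residual distance is off by one. Lemma~\ref{lemma_residual_code_bound} gives $d-w+\lceil w/2\rceil=d-\lfloor w/2\rfloor$, not $d-\lceil w/2\rceil$. With your value your own example fails, because $n_2(8,17)=41$. The correct residual $[41,8,18]_2$ is excluded by $n_2(8,18)=42$.
\item The transfer step needs forbidden weights beyond the window of $C$, and you never identify them. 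For $[117,9,56]_2$, the residual weights $30$ and $31$ can come from the pairs $(56,60)$ and $(56,62)$. So $60$ and $62$ must also be excluded, which works because the residuals $[57,8,26]_2$ and $[55,8,25]_2$ do not exist.
\item For $[125,9,60]_2$, the residual $[65,8,30]_2$ is itself Griesmer plus $3$. So both $31$ and $33$ have to be removed from it, which requires excluding $66$ in $C$ (pair $(60,66)$; its residual would be a nonexistent $[59,8,27]_2$ code) in addition to $61,62,63$.
\end{itemize}
With these additions, all obtainable from residual code arguments, your proof becomes the paper's.
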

\begin{proof}
  For each $[n,k,d]_2$ code $C$ of the above list we compute the Griesmer defect $t=n-g_2(k,d)$, use the residual code argument in Lemma~\ref{lemma_residual_code_bound} and other tailored arguments to exclude some small distances. If the remaining values in $\{d,d+1,\dots,d+t\}$ are divisible by $2^r$ and the corresponding codes are $2^{r-1}$-divisible, then we can
  apply Theorem~\ref{thm_gen_mat_plus_t_q_2} and Lemma~\ref{lemma_divisible_row_span} to conclude that $C$ is $2^r$-divisible. For $r>1$ the divisibility of the residual codes is obtained from preceding parameters in the list, if not stated otherwise.
  \begin{itemize}
   \item A $[33,7,14]_2$ code is a Griesmer plus $3$ code. Since $C$ does not contain
   a codeword of weight $15$ the non-existence of a $[16,6,7]_2$ code and Lemma~\ref{lemma_residual_code_bound_odd} excludes weight $17$. Thus, $C$ is spanned by codewords of weight $14$ and $16$.
   \item A $[49,8,22]_2$ code is a Griesmer plus $2$ code. Since $C$ does not contain
   a codeword of weight $23$ it is spanned by codewords of weight $22$ and $24$.
   \item A $[61,8,28]_2$ code is a Griesmer plus $3$. The forbidden weights imply that $C$ is spanned by codewords of weight $28$. The corresponding residual codes are $[33,7,14]_2$ codes and Equation~(\ref{eq_weight_residual2}) implies that there is no codeword of weight $15$.
   \item A $[78,9,36]_2$ code is a Griesmer plus $2$ code with forbidden weights $\{37,38,39\}$, so that it is spanned by codewords of weight $36$. The corresponding residual codes are $[42,8,18]_2$ Griesmer plus $2$ codes and Equation~(\ref{eq_weight_residual2}) excludes weight $19$, i.e.\ they are spanned by even codewords.
   \item A $[93,9,44]_2$ code is a Griesmer plus $2$ code. The assumption that it is even and Lemma~\ref{lemma_no_weight_46_in_93_9_44} imply that it is spanned by codewords of weight $44$. The corresponding residual codes are $[49,8,22]_2$ codes and Equation~(\ref{eq_weight_residual2}) implies that there is no codeword of weight $23$.
   \item A $[117,9,56]_2$ code is a Griesmer plus $3$ code with forbidden weights $\{57,\dots,63\}$, so that it is spanned by codewords of weight $56$. The corresponding residual codes are $[61,8,28]_2$ codes and Equation~(\ref{eq_weight_residual2}) excludes the weights $\{29,30,31\}$.
   \item A $[125,9,60]_2$ code is a Griesmer plus $3$ code with forbidden weights
   $\{61,62,63,65,66\}$, so that it is spanned by codewords of weight $60$. The corresponding residual codes are $[65,8,30]_2$ Griesmer plus $3$ codes and Equation~(\ref{eq_weight_residual2}) excludes the weights $\{31,33\}$, i.e.\ they are spanned by even codewords.
  \end{itemize}
\end{proof}
The $4$-divisibility of $[78,9,36]_2$ codes is also shown in \cite[Lemma 3.6]{kurz2021codeunique} and the $8$-divisibility of $[117,9,56]_2$ codes in \cite[Theorem 3.10]{kurz2021codeunique}. In \cite[Theorem 3.3]{kurz2021codeunique} it was shown that there exists a unique $[46,9,20]_2$ code. Interestingly enough, it has a trivial automorphism group and is doubly-even. We will use the $4$-divisibility of even $[93,9,44]_2$ codes to show their non-existence in Theorem~\ref{thm_no_93_9_44}.

\begin{lemma}
  \label{lemma_unique_99_8_48}
  There is a unique $[99,8,48]_2$ code. It has weight distribution
  $48^{210} 56^{36} 64^{9}$ and an automorphism group of order $144$. A generator matrix is given by
  $$
    \left(\begin{smallmatrix}
111000000001111111111110111011111111111100011000000001111110010111110110100000011111100000001110000\\
000111100001111111100001000111111111111000000111100001100001101111001011000000011110011111111110011\\
000000011110000000011111000011111110000010011110011000011111100111000010010000011110000111001111111\\
111111111111111111111111000000000000000001011110000110000000011111111010001000011111111111000111100\\
000111111110000111111110111000011110000000100111111111100111110100010110000100000001100000000110000\\
011001100110011001100111011001100110011111101010101011101110100001011010000010000111111001110111111\\
001010101010101010101010001110101010101100110011010010101010110110111110000001000110000111110001100\\
110010001101001011010110010000101111100110000011101110111001011011001100000000101010101011010010101\\
    \end{smallmatrix}\right).
  $$
\end{lemma}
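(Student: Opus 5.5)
The plan is to first prove that every $[99,8,48]_2$ code is $8$-divisible, then pin down its weight distribution with the MacWilliams identities, and finally obtain uniqueness from a (computer-aided) extension of the possible residual codes. Note that $g_2(8,48)=97$, so a $[99,8,48]_2$ code $C$ is a Griesmer plus $2$ code. Since $n_2(8,49)>99$, it contains codewords of weight $48$, and $C$ is of full length. By Theorem~\ref{thm_gen_mat_plus_t_q_2}, $C$ has a generator matrix whose rows have weights in $\{48,49,50\}$.

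The first step is to exclude weights $49$ and $50$, and more generally all odd weights and weights $\equiv 2,4,6 \pmod 8$ in a suitable range, by the residual code argument of Lemma~\ref{lemma_residual_code_bound}.
\begin{itemize}[noitemsep]
\item A codeword of weight $w$ gives a residual $[99-w,7,48-\lceil w/2\rceil]_2$ code.
\item For $w=50$ this is a $[49,7,23]_2$ code, which is excluded by the known value of $n_2(7,23)$ from \cite{grassl_codetables}. I have not checked this value and expect to verify it there.
\item The case $w=49$ gives a $[50,7,24]_2$ code. By \cite{kurz2020classification} it is unique, with weight distribution $24^{108}32^{19}$. I would combine this with Equation~(\ref{eq_weight_residual2}) and Lemma~\ref{lemma_residual_code_bound_odd} to derive a contradiction or to show that $C$ is even.
\end{itemize}
If instead $C$ is not even, then the even subcode is a $[99,7,\ge 48]_2$ code. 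I would then use Proposition~\ref{prop_div_one_more} together with the residual argument on odd codewords to get a contradiction, in the spirit of Lemma~\ref{lemma_no_weight_46_in_93_9_44}. After this step, $C$ is spanned by codewords of weight $48$.

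The second step is divisibility via Lemma~\ref{lemma_divisible_row_span}, applied twice.
\begin{itemize}[noitemsep]
\item The residual of a weight-$48$ codeword is a $[51,7,24]_2$ Griesmer plus $1$ code. Using Theorem~\ref{thm_gen_mat_plus_t_q_2} and Lemma~\ref{lemma_divisible_row_span} one level down (its own residuals are $[27,6,12]_2$ codes), I would show these residuals are $4$-divisible.
\item Any needed forbidden weights in the residual are transferred from $C$ via Equation~(\ref{eq_weight_residual2}).
\item Applying Lemma~\ref{lemma_divisible_row_span} with $r=3$ to the spanning weight-$48$ rows then yields that $C$ is $8$-divisible.
\end{itemize}
If the $4$-divisibility of the $[51,7,24]_2$ residuals cannot be proved by hand, I would enumerate the $[51,7,24]_2$ codes with \texttt{LinCode}, as done in Table~\ref{table_subcode_enumeration}, and check the $4$-divisibility directly. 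I expect this to be the main obstacle.

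The third step determines the weight distribution and gives uniqueness.
\begin{itemize}[noitemsep]
\item By $8$-divisibility and the residual code argument, the weights lie in $\{48,56,64,72,80,88,96\}$.
\item Lemma~\ref{lemma_residual_code_bound} excludes the large weights, e.g.\ $80$, $88$ and $96$ give residual codes that are too short for dimension $7$.
\item The first four MacWilliams identities (\ref{eq_mw1})--(\ref{eq_mw4}), with $B_2,B_3\ge 0$ and integrality, should then force $48^{210}56^{36}64^{9}$.
\end{itemize}
For uniqueness I would take a weight-$48$ codeword $c$. Its residual is a $4$-divisible $[51,7,24]_2$ code, of which only a few exist. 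The code $C$ is then reconstructed as an $8$-divisible extension, i.e.\ adding $48$ columns so that all hyperplane multiplicities are $\equiv 99 \pmod 8$. I would run this extension search with \texttt{LinCode} and verify that all solutions are equivalent to the stated generator matrix, computing $\#\operatorname{Aut}=144$ at the end.
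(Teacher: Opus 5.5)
Your plan breaks down at the first step, and the later steps depend on it. A codeword of weight $50$ has a $[49,7,23]_2$ residual, and such codes do exist. Indeed $n_2(7,23)=n_2(7,24)-1=49$ (puncture the unique $[50,7,24]_2$ code), and the paper classifies exactly two of them. Likewise a codeword of weight $52$ has a $[47,7,22]_2$ residual, which exists and is unique. So no residual-type bound excludes the weights $50$ and $52$. You therefore cannot conclude that $C$ is spanned by codewords of weight $48$, and Lemma~\ref{lemma_divisible_row_span} is not available with $r=2$ or $r=3$.

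This is exactly where the paper uses the computer. It first shows that $C$ is even: weight $49$ is excluded, so there is a generator matrix with rows of weight $48,50$, and $r=1$ applies. It then combines the weight distributions of the classified $[49,7,23]_2$ resp.\ $[47,7,22]_2$ codes with Equation~(\ref{eq_weight_residual2}) to confine the weights of $C$ to $\{48,50,56,64\}$ resp.\ $\{48,52,56,64\}$. A \texttt{LinCode} classification of the $[99,8,48]_2$ codes with these weights returns only the code with $48^{210}56^{36}64^{9}$, so neither weight occurs. Weights $98$ and $96$ are removed by separate shortening and divisibility arguments.

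Your idea for weight $49$ does work, even without a computer. Equation~(\ref{eq_weight_residual2}) with the residual $24^{108}32^{19}$ forces $48^{108}49^{128}64^{19}$. Its even subcode would then satisfy $\sum_i iA_i=6400=2^6\cdot 100$, i.e.\ have effective length $100>99$.

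Your Step~2 cannot replace the computer step. A $[51,7,24]_2$ code is a Griesmer plus $2$ code ($g_2(7,24)=49$), not plus $1$. Such codes need not even be $2$-divisible: appending any column to the $[50,7,24]_2$ code produces weights $25$ or $33$. So enumerating $[51,7,24]_2$ codes and checking $4$-divisibility fails. Restricting to those that occur as residuals of $C$ requires the weight restrictions on $C$ that you are trying to prove, which is circular.

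Step~3 also overclaims, in three places.
\begin{itemize}
\item A weight-$80$ codeword has a $[19,7,8]_2$ residual, which exists since $n_2(7,8)=19$.
\item Weight $96$ lies outside the range $w<2d$ of Lemma~\ref{lemma_residual_code_bound}.
\item The MacWilliams identities do not pin down the distribution. For $8$-divisible codes with weights in $\{48,\dots,80\}$ and $B_2=0$ they give $A_{56}=36+3A_{72}+8A_{80}$ and $A_{64}=9-3A_{72}-6A_{80}$. For example, $48^{209}56^{39}64^{6}72^{1}$ satisfies them with $B_3=425$, and for $B_2=1$ they admit $48^{211}56^{34}64^{10}$.
\end{itemize}
The paper resolves this last point by enumerating all $8$-divisible $[99,8,48]_2$ codes with maximum weight at most $80$.
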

\begin{proof}
  Let $C$ be a $[99,8,48]_2$ code. Since no $[98,8,48]_2$ code exists it has full length. The Griesmer defect of $C$ is $2$ and the residual code argument in Lemma~\ref{lemma_residual_code_bound} excludes all odd weights except $\{49,97,99\}$. The residual code of a codeword of weight $49$ is a $[50,7,24]_2$ code. It is unique and has weight distribution $24^{108} 32^{19}$, see e.g.\ \cite{kurz2020classification} where a precursor of \texttt{LinCode} was used for the exhaustive enumeration.\footnote{Due to the existence of a unique $[51,8,24]_2$ projective two-weight code this classification result is not contained in \cite{bouyukliev2001optimal}.} With this we compute the unique weight distribution $48^{108} 49^{128} 64^{19}$. Using \texttt{LinCode} we have exhaustively enumerated all possibilities for the even-weight subcode. It is a unique $[96,7,48]_2$ code with weight distribution $48^{124} 64^3$
  -- contradiction. Thus, $C$ is spanned by codewords of weight $48$ and $50$, so that Lemma~\ref{lemma_divisible_row_span} implies that $C$ is even.

  From the residual code argument in Lemma~\ref{lemma_residual_code_bound} we conclude that the non-zero weights are contained in $\{48,50,52,56,64,72,80,96,98\}$. Assume that $c_{50}$ is a codeword of weight $50$. Its residual code is a $[49,7,23]_2$ code, which we exhaustively enumerate using \texttt{LinCode}. There are exactly two non-equivalent such codes and they have weight distributions $23^{52} 24^{56} 31^{12} 32^{7}$ or $23^{48} 24^{60} 31^{16} 32^3$, respectively. Thus, the non-zero weights of $C$ have to be contained in $\{48,50,56,64\}$. Using \texttt{LinCode} we have enumerated those $[99,8,48]_2$ codes. The unique such code has weight distribution $48^{210} 56^{36} 64^{9}$, i.e.\ no codeword of weight $50$ exists. Assume that $c_98$ is a codeword of weight $98$. Since $\Res\!\left(C;c_{98}\right)$ is a $[1,1,1]_2$ code,
  $C$ shortened onto $\supp(c_{98})$ is a $[98,7,48]_2$ code containing a codeword of weight $98$, which is impossible. Thus, $C$ is $4$-divisible.

  Assume that $c_{52}$ is a codeword of weight $52$. Its residual code is a $[47,7,22]_2$ code, which we exhaustively enumerate using \texttt{LinCode}. There is just one such code and it has weight distributions $22^{78} 24^{30} 30^{18} 32^{1}$. Thus, the non-zero weights of $C$ have to be contained in $\{48,52,56,64\}$. Using \texttt{LinCode} we have enumerated those $[99,8,48]_2$ codes. Again, the unique such code has weight distribution $48^{210} 56^{36} 64^{9}$, i.e.\ no codeword of weight $52$ exists. Thus, $C$ is $8$-divisible.

  Since the residual codes are $4$-divisible, there cannot be a codeword of weight $96$, whose residual code would have effective length $3$.
  Using $\texttt{LinCode}$ we have enumerated all $8$-divisible $[99,8,48]_2$ codes with maximum weight at most $80$.\footnote{We remark that there are unique residual codes for putative codewords of weight $72$ and $80$, which e.g.\ imply $A_{72}+A_{80}\le 1$.}
\end{proof}
We remark that the $[99,8,48]_2$ code in Lemma~\ref{lemma_unique_99_8_48} cannot be extended to a $[101,8,48]_2$ code. So, if a $[101,8,48]_2$ code exists, it has to be projective.

\begin{lemma}
  \label{lemma_no_99_9_46_SO}
  There exists no $[99,9,46]_2$ SO code.
\end{lemma}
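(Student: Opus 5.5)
Assume that $C$ is a $[99,9,46]_2$ SO code and let $\cM$ be its multiset of points in $\PG(8,2)$. The plan is to reduce to the eight-dimensional subcode of doubly-even codewords and then apply Lemma~\ref{lemma_unique_99_8_48}.

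By Proposition~\ref{prop_SO_characterization}.(B), the doubly-even codewords of $C$ form a subcode $C_4$ of dimension at least $8$. Since $d\equiv 2\pmod 4$, the minimum distance of $C_4$ is at least $48$. The dimension of $C_4$ cannot be $9$: otherwise $C$ would be a $[99,9,48]_2$ code, and I expect $n_2(9,48)>99$ (it is $g_2(9,48)+\dots$, to be checked against the known values). So $C_4$ is a $[\le 99,8,48]_2$ code. No $[98,8,48]_2$ code exists, hence $C_4$ has full length $99$ and is a $[99,8,48]_2$ code. Lemma~\ref{lemma_unique_99_8_48} then determines it: $C_4$ is the unique code with weight distribution $48^{210}56^{36}64^9$.

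Now count weights in $C$. The $256$ codewords of $C\setminus C_4$ form a coset $x+C_4$, and all of them are singly-even. To restrict their weights I would use the residual SO code argument from Lemma~\ref{lemma_residual_SO_code_bound}, together with Lemma~\ref{lemma_residual_SO_large_weight} and the known values of $n_2(8,\cdot)$. For a singly-even weight $w<92$, the residual is a $[99-w,8,47-w/2]_2$ code, so we need $99-w\ge n_2(8,47-w/2)$. For example:
\begin{itemize}[noitemsep]
  \item $w=50$ gives a $[49,8,22]_2$ code, which is possible;
  \item $w=54$ gives a $[45,8,20]_2$ code, which I expect to be excluded;
  \item $w=46$ gives a $[53,8,24]_2$ code, which is possible.
\end{itemize}
This should leave only a short list of admissible singly-even weights, roughly $\{46,50,\dots\}$ together with some large weights.

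Then apply the MacWilliams identities (\ref{eq_mw1})--(\ref{eq_mw4}) to $C$. The unknowns are the singly-even $A_i$, and the doubly-even part is fixed to $A_{48}=210$, $A_{56}=36$, $A_{64}=9$. The constraints are $B_1=0$ and $B_2,B_3\ge 0$, where $B_2$ is bounded via $\gamma(\cM)$; projection through a point together with the value of $\tilde{n}_2(8,46)$ should force $\gamma(\cM)\le 1$ or $2$. I expect the resulting linear program to be infeasible, or at least to give non-integral values. If it is not, I would add the split weight enumerator with respect to a codeword of weight $46$. Its residual is a $[53,8,24]_2$ code, and these are presumably few and $4$-divisible; Equation~(\ref{eq_weight_residual2}) with Lemma~\ref{lemma_min_div} would then constrain which sums $x+y$ can occur.

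The main obstacle is this final step, namely showing that the coset $x+C_4$ is incompatible with the fixed code $C_4$. An alternative route is geometric: a $(99-46)$-arc $\cM$ whose doubly-even subcode is the fixed $[99,8,48]_2$ code is obtained by lifting the multiset of $C_4$ from $\PG(7,2)$ to $\PG(8,2)$. In coding terms this means extending $C_4$ by one generator row. It could be checked exhaustively with \texttt{LinCode}, as was done in Lemma~\ref{lemma_SO_non_existence_LinCode}, using the order-$144$ automorphism group to keep the search small. I would try the MacWilliams/LP argument first and fall back on the extension search if it does not close.
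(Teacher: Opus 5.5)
Your reduction is the same as the paper's. $C_4$ has dimension $8$: dimension $9$ is ruled out because shortening gives $n_2(9,48)\ge n_2(8,48)+1=100$, which settles your ``to be checked''. It has minimum distance $48$ and full length, so it is the unique code of Lemma~\ref{lemma_unique_99_8_48}. The paper then finishes with exactly your fallback: a \texttt{LinCode} check that this code has no extension to a $[99,9,46]_2$ code.

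The route you plan to try first does not close as sketched, so the extension search is the actual proof, not a backup.
\begin{itemize}
\item Weight $54$ is not excluded by the residual argument, since $n_2(8,20)=45$ (shorten the $[46,9,20]_2$ code). Among singly-even weights, the residual SO argument together with Lemma~\ref{lemma_residual_SO_large_weight} removes only $58$, $74$ and $98$.
\item More seriously, the low-order MacWilliams identities are satisfiable. Applying (\ref{eq_mw3}) to $C_4$ gives $B_2(C_4)=0$. Hence $\cM$ is projective, $B_2(C)=0$, and $B_3(C)\le B_3(C_4)=441$. The coset distribution $46^{162}54^{76}62^{18}$ then satisfies (\ref{eq_mw1})--(\ref{eq_mw4}) for $C$ with $B_3=129$, and the next identity with $B_4=7848$. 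So this LP gives neither infeasibility nor non-integrality.
\item Your expectation that the $[53,8,24]_2$ residuals are $4$-divisible is not automatic. Adding $2\chi_P$ to the multiset of the $4$-divisible $[51,8,24]_2$ two-weight code gives an even $[53,8,24]_2$ code that is not $4$-divisible.
\end{itemize}
A split-weight-enumerator proof would therefore need the kind of exhaustively enumerated residual and subcode data used in Lemma~\ref{lemma_no_121_8_58_SO}.
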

\begin{proof}
  Let $C$ be a $[99,9,46]_2$ SO code. In Lemma~\ref{lemma_unique_99_8_48}
  we have classified the possible subcode of doubly-even codewords.
  Via a \texttt{LinCode} computation we verify that no extension to a $[99,9,46]_2$ code exists.
\end{proof}

\bigskip

Using similar methods as in Lemma~\ref{lemma_no_121_8_58_SO} we obtain the following non-existence result.
\begin{lemma}
  \label{lemma_no_130_9_62}
  There exists no $[130,9,62]_2$ SO code.
\end{lemma}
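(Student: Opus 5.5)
Let $C$ be a $[130,9,62]_2$ SO code and $\cM$ its multiset of points in $\PG(8,2)$; the plan is to mimic Lemma~\ref{lemma_no_121_8_58_SO}. We have $\tilde{g}_2(9,62)=g_2(9,64)-3+1=129$. By Corollary~\ref{cor_ex_non_existence_2} there is no $[129,9,62]_2$ SO code, so $C$ has full length. Projection through a point gives an $[130-\cM(P),8,62]_2$ SO code. Since $\tilde{n}_2(8,62)=127+3=130$ by Theorem~\ref{main_thm} (no $[127,8,62]_2$ SO code by Lemma~\ref{lemma_no_SO_code_via_weight_restrictions2}), every point has multiplicity at most one or two. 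I expect $\gamma(\cM)\le 2$, and I will determine the precise bound from the exact value $\tilde{n}_2(8,62)$. Lemma~\ref{lemma_residual_SO_large_weight} then excludes weight $129$ and, in the projective case, weight $128$.

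Next I restrict the weights. By Proposition~\ref{prop_SO_characterization}.(B) the doubly-even codewords form a subcode $C_4$. Since $n_2(9,64)=g_2(9,64)=131>130$, it has dimension $8$, and it is a $4$-divisible $[\le 130,8,64]_2$ code. The Griesmer bound gives $g_2(8,64)=127$, so $C_4$ has length $127$ to $130$. Its candidates are the Solomon--Stiffler/simplex-type code $64^{255}$ (if of length $127$) or short Griesmer-plus-$t$ codes. I would classify the candidate weight distributions of $C_4$ with \texttt{LinCode}, as was done for $[\le121,7,60]_2$, and record $B_2$ for each. For the singly-even weights, the residual SO code argument (Lemma~\ref{lemma_residual_SO_code_bound}) with $n_2(8,\cdot)$ from Theorem~\ref{main_thm} and the table excludes most weights $w\equiv 2\pmod 4$ with $66\le w<124$. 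The surviving weights should be $62$ (forced, since no $[130,9,64]_2$ code exists) and a handful of others, whose residuals are $[130-w,8,63-w/2+1]_2$ even codes. As in Lemma~\ref{lemma_no_SO_code_via_weight_restrictions2}, a residual of a large surviving weight can sometimes be killed directly by MacWilliams.

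Then I enumerate the even residual codes for each surviving singly-even weight, in particular the $[68,8,32]_2$ residual of weight $62$. From these I take the lists of weight distributions, and iterate the split-weight-enumerator LP for the partition $\supp(c)$, $[n]\setminus\supp(c)$. For each $c$ I prescribe pairs (weight distribution of $C_4$, weight distribution of $\Res(C;c)$). The order is: first $\wt(c)=62$ to eliminate large weights and $C_4$ candidates, then the large weights themselves, then $62$ again to pin down $A_{64}$ and the full weight distribution of $C$. Along the way I compare $B_2$ of $C$ with $B_2$ of $C_4$ as in Lemma~\ref{lemma_no_121_8_58_SO}: points of multiplicity two in $\cM$ persist in $\cM_4$. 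Finally I use a split weight enumerator for a second weight (likely $66$ or $70$) to reach the contradiction.

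The main obstacle is the classification step: the number of doubly-even $[\le130,8,64]_2$ codes and of residual codes such as $[68,8,32]_2$ may be large. If it is, I would first fall back on weight-distribution-level information only, namely MacWilliams LPs with divisibility constraints (e.g.\ $4$-divisibility of residuals via Lemma~\ref{lemma_divisible_row_span}), since the argument only needs lists of possible weight distributions, not the codes themselves.
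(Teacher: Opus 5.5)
Your strategy is the paper's: full length (your direct use of Corollary~\ref{cor_ex_non_existence_2}, after padding any shorter code with zero coordinates, is fine), $\gamma(\cM)\le 2$, the residual SO argument for the singly-even weights, the possible weight distributions of the doubly-even subcode $C_4$, a codeword of weight $62$ forced by $n_2(9,64)>130$, enumeration of its $[68,8,32]_2$ residuals ($153$ codes), and the split weight enumerator.

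The paper's execution is leaner in two places. First, it does not enumerate all doubly-even $[\le 130,8,64]_2$ codes. It pins $C_4$ down to the three weight distributions $64^{254}128^1$, $64^{190}68^{64}128^1$ and $64^{235}80^{20}$ by short residual arguments:
\begin{itemize}
\item a full-length doubly-even $[129,8,64]_2$ code could only have weights $64$ and $128$, which the MacWilliams identities forbid;
\item in the full-length $[130,8,64]_2$ case the weights lie in $\{64,68,80,128\}$, and the three subcases are settled via the restriction to the support of a weight-$128$ word, the unique $[50,7,24]_2$ residual, and the two $[62,7,30]_2$ residuals.
\end{itemize}
Second, a single split-weight LP for $\wt(c)=62$ already gives the contradiction. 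The residuals of the other surviving weights and the $B_2$ comparison are not needed.

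Several of your numbers are wrong, although none of this breaks the strategy.
\begin{itemize}
\item $\tilde n_2(8,62)=\tilde g_2(8,62)+2=128$, not $130$. With $130$, projecting through any point of the support would already prove the lemma, which should have been a warning sign. The value $128$ gives exactly $\gamma(\cM)\le 2$.
\item $\tilde g_2(9,62)=127$ and $g_2(9,64)=129$. Since $n_2(9,64)=133$, your conclusion $n_2(9,64)>130$ still stands.
\item Most importantly, $g_2(8,64)=128$. There is no $[127,8,64]_2$ code and no weight distribution $64^{255}$. The shortest candidate for $C_4$ is the $[128,8,64]_2$ Griesmer code with weight distribution $64^{254}128^1$ (so $C_4$ has two zero coordinates), and your candidate list must contain it.
\item The residual of a weight-$w$ codeword is a $[130-w,8,63-w/2]_2$ code, not $[130-w,8,64-w/2]_2$. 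Your $[68,8,32]_2$ for $w=62$ is correct.
\end{itemize}
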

\begin{proof}
  Assume that $C$ is a $[130,9,62]_2$ SO code and $\cM$ its corresponding multiset of points. Since $\tilde{n}_2(8,62)=128$ the effective length of $C$ is at least $129$, which is excluded by Corollary~\ref{cor_ex_non_existence_2}. Thus, $C$ has full length. The residual code argument in Lemma~\ref{lemma_residual_SO_code_bound} excludes all weights in $\{66,70,74,86,90,98,102,106,114,118,122\}$. By $C_4$ we denote the subcode of the doubly-even weight codewords. A $[128,8,64]_2$ code is a Griesmer code with weight distribution $64^{254} 128^1$. For a full-length doubly-even $[129,8,64]_2$ code the residual code argument in Lemma~\ref{lemma_residual_code_bound} excludes all non-zero weights except $\{64,128\}$, which is impossible due to the MacWilliams identities.\footnote{More precisely, the MacWilliams identities give $A_{64}=252$, $A_{128}=3$, and $B_3=-2603<0$.} Now assume that
  $C_4$ is a full-length doubly-even $[130,8,64]_2$ code. The residual code argument in Lemma~\ref{lemma_residual_code_bound} excludes all non-zero weights except $\{64,68,80,128\}$. If $c\in C_4$ is a codeword of weight $128$, then the residual
  code is a $[2,1,2]_2$ code and $C'$ arising from $C_4$ by restricting to $\supp(c)$
  is a full length $[128,8,62]_2$ code with weight distribution $62^{64} 64^{126} 66^{64} 128^1$\footnote{Such a code indeed exists and it has an automorphism group of order $1981808640$.}, so that $C_4$ has weight distribution $64^{190} 68^{64} 128^1$. If there is no codeword of weight $128$ in $C_4$, but a codeword of weight $80$, then \texttt{LinCode} shows that the residual code is a unique $[50,7,24]_2$ code with weight distribution $24^{108} 32^{19}$, cf.~\cite{kurz2020classification}. Counting yields the unique weight distribution $64^{235} 80^{20}$ for $C_4$. If the maximum weight of $C_4$ is $68$, then the residual code is a $[62,7,30]_2$ code. \texttt{LinCode} shows that there are exactly two such non-isomorphic codes, both with a codeword of weight $62$, which yields a contradiction in our setting. Thus, we have classified exactly three possible weight distributions for $C_4$.

  Since no $[130,9,64]_2$ code exists, there is a codeword $c\in C$ with weight $62$. The residual code $\Res(C;c)$ is a $[68,8,32]_2$ code. Using \texttt{LinCode} we exhaustively enumerate all $153$ such codes, where all of them are even automatically. Having the possible weight distributions for the doubly-even codewords and the residual codes of a codeword of weight $62$ at hand, we use the split weight enumerator to obtain a contradiction.
\end{proof}

\todo{Describe the $[130,8,64]_2$ code with weight distributions $64^{190} 68^{64} 128^1$ and $64^{235} 80^{20}$ geometrically.}

\begin{theorem}
  \label{thm_no_93_9_44}
  There is no $[93,9,44]_2$ code.
\end{theorem}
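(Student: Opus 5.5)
We assume that $C$ is a $[93,9,44]_2$ code and derive a contradiction in three steps: first reduce to the even case, then use Proposition~\ref{prop_opt_implies_divisibility} to upgrade to $4$-divisibility, and finally show that a $4$-divisible code with these parameters cannot exist.

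\emph{Reduction to even codes.} Since $n_2(9,46)>93$, the code $C$ has full length, and its Griesmer defect is $t=93-g_2(9,44)=2$. Two cases are possible.
\begin{enumerate}
\item If $C$ has a codeword $c$ of odd weight $w$, then $w<2d=88$ is forced by Lemma~\ref{lemma_residual_code_bound}. In that case $\Res(C;c)$ is a $[93-w,8,44-w+\lceil w/2\rceil]_2$ code, and for most odd $w$ this violates known bounds for $n_2(8,\cdot)$. For the remaining weight $w=45$ we would apply Lemma~\ref{lemma_residual_code_bound_odd}, giving a $[48,8,\ge 22]_2$ residual.
\item Otherwise we pass to the even-weight subcode and puncture: the even-weight subcode $C_e$ is a $[93,8,44]_2$ code, and the odd codewords form a coset.
\end{enumerate}
A cleaner route is the following. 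By Theorem~\ref{thm_gen_mat_plus_t_q_2}, $C$ is spanned by codewords of weight $44$, $45$ or $46$. Weight $45$ should be excluded by a residual argument: its residual is a $[48,8,22]_2$ code, and such a code would have to be compared with the known $[47,8,21]_2$ and $[49,8,22]_2$ data. Once weight $45$ is gone, the generators have even weight and the residuals w.r.t.\ these rows are even by Equation~(\ref{eq_weight_residual2}), so Lemma~\ref{lemma_divisible_row_span} with $r=1$ gives that $C$ is even. I expect this step to need a small computer check of $[48,8,22]_2$ residuals, in the same spirit as Lemma~\ref{lemma_no_weight_46_in_93_9_44}.

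\emph{Divisibility.} Once $C$ is even, Proposition~\ref{prop_opt_implies_divisibility} yields that $C$ is $4$-divisible. Its nonzero weights then lie in $\{44,48,56,60,64,72,76,80,84,88,92\}$, intersected with the list from the residual argument in the proof of Lemma~\ref{lemma_no_weight_46_in_93_9_44}. Since $C$ is doubly-even, it is SO, and $\cM$ is a $4$-divisible multiset of $93$ points in $\PG(8,2)$.

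\emph{Final contradiction.} We first exclude large weights. Weight $92$ would give a residual of effective length $1$, impossible for a $2$-divisible residual. Weights $88$ and $84$ give residuals of length $5$ and $9$ and dimension $8$; these are too small, and the residual code argument kills them. Weights $72,76,80$ give $[21,8,8]_2$, $[17,8,6]_2$ and $[13,8,4]_2$ residuals that must be $2$-divisible, and these are excluded by $n_2(8,\cdot)$ bounds (e.g.\ $n_2(8,6)>17$ and $n_2(8,4)>13$). This leaves the weights $\{44,48,56,60,64\}$. With $B_1=0$ and $B_2$ bounded, the residuals of weight-$44$ codewords are $[49,8,22]_2$ codes that are $2$-divisible, and the first four MacWilliams identities (\ref{eq_mw1})--(\ref{eq_mw4}) together with non-negativity should then be infeasible. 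If the pure LP is not infeasible, I would add the split weight enumerator w.r.t.\ a codeword of weight $44$, using the possible weight distributions of the even $[49,8,22]_2$ residuals. Alternatively, one could invoke the classification of $4$-divisible codes of length $93$ or the known non-existence of projective $4$-divisible sets of this size in $\PG(8,2)$.

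I expect the main obstacle to be this last LP or split-weight-enumerator step, together with excluding weight $45$ in the first step.
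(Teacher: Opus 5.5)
Your reduction to the even case and the upgrade to $4$-divisibility via Proposition~\ref{prop_opt_implies_divisibility} match the paper in outcome. The even case is obtained much more simply by puncturing one coordinate and appending a parity bit. Your exclusion of weight $45$ does not work as stated, since its residual would be a $[48,8,22]_2$ code and such codes exist ($n_2(8,22)=48$).

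The genuine gap is the final step, because your elimination of the large weights is wrong.
\begin{itemize}
\item For $w=72,76,80,84$ the residuals are $[21,8,8]_2$, $[17,8,6]_2$, $[13,8,4]_2$ and $[9,8,2]_2$ codes, and even codes with each of these parameters exist. The paper records $n_2(8,8)=20$, $n_2(8,6)=17$ and $n_2(8,4)=13$, so your inequalities $n_2(8,6)>17$ and $n_2(8,4)>13$ are false, and $[9,8,2]_2$ is just the even-weight code.
\item For $w=88=2d$, Lemma~\ref{lemma_residual_code_bound} does not apply, so the residual need not have dimension $8$. The paper instead shows that the code shortened on $\supp(c_{88})$ has nonzero weights in $\{44,88\}$ and hence dimension at most $4$, while the even $[5,k'',2]_2$ residual has $k''\le 4$, contradicting $k'+k''=9$.
\end{itemize}
After the legitimate exclusions ($52,68,88,92$), the weights $\{44,48,56,60,64,72,76,80,84\}$ survive, and plain LP cannot finish. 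Already with only $\{44,48,56\}$ the identities (\ref{eq_mw1})--(\ref{eq_mw4}) with $B_1=B_2=0$ are satisfied by $44^{344}48^{85}56^{82}$ with $B_3=92$. In the paper, even after $60$ and $64$ have been removed, the LP only yields $B_3>0$, not infeasibility. Your alternatives do not help either: projective $4$-divisible sets of $93$ points spanning $\PG(8,2)$ do exist (e.g.\ suitable disjoint unions of planes and affine $3$-spaces), so divisibility alone gives no contradiction.

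What is missing is the structural, computer-assisted input the paper uses.
\begin{itemize}
\item First record that $C$ is projective, since no $[91,8,44]_2$ code exists; this gives $B_2=0$.
\item Then exclude weights $60$ and $64$. Their residuals are $[33,8,14]_2$ and $[29,8,12]_2$ codes, which are classified (two and $73$ codes, respectively). For every possible residual weight distribution, the split weight enumerator w.r.t.\ $\supp(c)$ and its complement is infeasible.
\item With the weights restricted to $\{44,48,56,72,76,80,84\}$, the LP forces $B_3>0$. Since $B_2=0$, this means there is a line $L$ whose three points all have multiplicity one in $\cM$.
\item The projection $\cM_L$ is then a $[90,7,44]_2$ subcode of $C$. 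Each of the six $[90,7,44]_2$ codes contains a codeword of weight $60$ or $64$, contradicting the previous exclusion.
\end{itemize}
Your fallback, a split weight enumerator w.r.t.\ weight-$44$ codewords with even $[49,8,22]_2$ residuals, is not shown to produce a contradiction. As written, the proposal does not prove the theorem.
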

\begin{proof}
  Assume that $C$ is an even $[93,9,44]_2$ code\footnote{If $C$ is a $[93,9,44]_2$ code that is not even, then puncturing and adding a parity bit gives an even $[93,9,44]_2$ code.} and $\cM$ be its corresponding multiset of points. Since no $[91,8,44]_2$ exists $C$ is projective. From Proposition~\ref{prop_opt_implies_divisibility} we conclude that $C$ is doubly-even. Thus, there cannot be a codeword of weight $92$ whose residual would be $2$-divisible having effective length $1$. If $c_{88}$ is a codeword of weight $88$, then $C$ shortened onto $\supp(c_{88})$ is a $[88,k',44]_2$ code with non-zero weights in $\{44,88\}$, so that $k'\le 4$. However, $\Res(C;c_{88})$ is an even $[5,k'',2]_2$ code, where $k''\le 4$, which contradicts $k'+k''=9$. The residual code argument in Lemma~\ref{lemma_residual_code_bound} excludes the weights $\{52,68\}$.
  Using \texttt{LinCode} we exhaustively enumerate all two non-equivalent $[33,8,14]_2$ and all $73$ non-equivalent $[29,8,12]_2$ codes, which are the residual codes of codewords of weight $60$ and $64$, respectively. In both cases applying the split weight enumerator with respect to the support of such a codeword and its complement yields a contradiction for each possible weight distribution of the residual code. Thus, $C$ does not contain codewords of weight $60$ or $64$.
  Using these constraints, the linear programming method yields $B_3>0$, i.e.\ there exists a line $L$ where all three points have multiplicity one in $\cM$. With this, the projection $\cM_L$ corresponds to a $[90,7,44]_2$ code $C'$. Using \texttt{LinCode} we exhaustively enumerate all six non-equivalent  $[90,7,44]_2$ codes. Since each of them contains either a codeword of weight $60$ or a codeword of weight $64$, we obtain a contradiction.
\end{proof}

\begin{theorem}
  \label{thm_SO_k_9}
  For even $2\le d\le 64$ we have the following bounds for $\tilde{n}_2(9,d)$:
  \begin{center}
    \begin{tabular}{lcccccccccccccccc}
    \hline
    $d$                & 2 & 4 & 6 & 8 & 10 & 12 & 14 & 16 & 18 & 20 & 22 & 24 & 26 & 28 & 30 & 32 \\
    $\tilde{n}_2(9,d)$ & 18 & 18 & 20 & 21 & 29 & 30 & 36 & 38 & 45 & 46 & 53 & 54 & 61 & 62 & 68 & 70 \\
    \hline
    \end{tabular}
    \begin{tabular}{lcccccccccccccccc}
    \hline
    $d$ & 34 & 36 & 38 & 40 & 42 & 44 & 46 & 48 & 50\\
    $\tilde{n}_2(9,d)$ & 77 & 78--79 & 84 & 86 & 92--93 & 94--95 & 100 & 101--102 & 108--109\\
    \hline
    \end{tabular}
    \begin{tabular}{lcccccccccccccccc}
    \hline
    $d$ & 52 & 54 & 56 & 58 & 60 & 62 & 64 \\
    $\tilde{n}_2(9,d)$ & 110 & 115--116 & 117--118 & 124 & 125--126 & 131 & 133 \\
    \hline
    \end{tabular}
  \end{center}
\end{theorem}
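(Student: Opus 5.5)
Each entry of the table is established by pairing a lower bound with a construction, working column by column. For lower bounds, the starting point is always the Griesmer SO bound $\tilde{g}_2(9,d)$ from Inequality~(\ref{ie_Griesmer_SO}). It is strengthened in three ways. First, for doubly-even $d$ we use the coding lower bound $\tilde{n}_2(9,d)\ge n_2(9,d)$, together with the known values of $n_2(9,d)$ from \cite{grassl_codetables}. Second, for singly-even $d$ we use Lemma~\ref{lemma_non_existence_summary}.(1): an $[n,9,d]_2$ SO code forces an $[\le n,8,d+2]_2$ subcode, so $n_2(8,d+2)$ (known completely) gives a lower bound. Third, we use Lemma~\ref{lemma_non_existence_summary}.(4), as in Corollary~\ref{cor_ex_non_existence_2}, which excludes $[129,9,62]_2$. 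For small $d$ we also use $n\ge 2k=18$.

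Several entries need special non-existence results from this section:
\begin{itemize}
\item $d=22$: Lemma~\ref{lemma_SO_non_existence_LinCode} excludes a $[52,9,22]_2$ SO code, giving $53$.
\item $d=46$: Lemma~\ref{lemma_no_99_9_46_SO} gives $100$.
\item $d=62$: Lemma~\ref{lemma_no_130_9_62} gives $131$.
\item $d=42,44$: Theorem~\ref{thm_no_93_9_44} gives $n_2(9,44)\ge 94$, hence $\tilde{n}_2(9,44)\ge 94$. Via Lemma~\ref{lemma_non_existence_summary}.(1) applied in dimension $9$, the entries for $d=42,44$ come from $n_2(8,44)$ and $n_2(9,44)$.
\end{itemize}

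For the upper bounds I would collect constructions:
\begin{itemize}
\item Lemma~\ref{lemma_construction_aut} covers $d\in\{30,32,38,42,46,54,58,62\}$ and the doubly-even codes of lengths $86,95,102,118,126,133$. Decreasing a point multiplicity by $2$ via Lemma~\ref{lemma_minus_double_point} then also settles $d=40,44,48,56,60,64$.
\item Lemma~\ref{lemma_griesmer_code_implies_SO}.(1) handles Griesmer codes with $d\equiv 0\pmod 4$.
\item Lemma~\ref{lemma_insert} with $\mathcal{RM}_2(1,8)$ and the values of $\tilde{n}_2(8,\cdot)$ from Theorem~\ref{main_thm} give further cases.
\item Proposition~\ref{prop_construction_griesmer_plus_2_singly_even} handles singly-even $d$ near $2^{k-1}$.
\item For small $d\le 28$, known optimal self-orthogonal codes (e.g.\ \cite{BouyuklievOstergaard2006Binary}, \cite{li2026nonexistence}, \cite{an2025shortest}) or doubly-even codes from \texttt{BKLC} with Lemma~\ref{lemma_griesmer_code_implies_SO}.(3) would be cited.
\end{itemize}

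The main obstacle is bookkeeping rather than a single deep step. For each $d$ one must check that the cited lower bound exactly matches the cited construction where an exact value is claimed, and otherwise that the interval is correct. The delicate cases are those where $n_2(9,d)$ is itself unknown ($30\le d\le 48$ apart from $32,40$, and $54\le d\le 62$). There the SO lower bound has to come entirely from the dimension-$8$ results through Lemma~\ref{lemma_non_existence_summary}.(1) and (4), or from the special lemmas above. I would first tabulate $\tilde{g}_2(9,d)$, $n_2(9,d)$ bounds, $n_2(8,d+2)$ and the construction lengths side by side, and then verify each column.
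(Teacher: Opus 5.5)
The lower-bound half of your plan is the same as the paper's. It uses the Griesmer SO bound, $\tilde{n}_2(9,d)\ge n_2(9,d)$ for doubly-even $d$, $\tilde{n}_2(9,d)\ge n_2(8,d+2)$ for singly-even $d$, and the special results for $d\in\{22,44,46,62\}$. Two small corrections:
\begin{itemize}
\item The value $n_2(9,40)=86$ also needs the non-existence of $[85,9,40]_2$ codes from \cite{kurz2021codeunique}.
\item For doubly-even $d\in\{36,44,48,56,60\}$ the lower bound is the known lower bound on $n_2(9,d)$ (Theorem~\ref{thm_no_93_9_44} for $d=44$), not a dimension-$8$ argument as your last paragraph suggests.
\end{itemize}

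The genuine gap is in the construction half, because every general tool you list produces nothing for $k=9$, $d\le 64$:
\begin{itemize}
\item There are no Griesmer codes with these parameters, so Lemma~\ref{lemma_griesmer_code_implies_SO}.(1) never applies.
\item Parts (2) and (3) of that lemma require $d>2^{k-1}=256$; that hypothesis is what guarantees a point of multiplicity at least $2$.
\item Lemma~\ref{lemma_insert} with $\mathcal{RM}_2(1,8)$ starts from a $[256,9,128]_2$ code and only reaches $\tilde{d}=128$.
\item Proposition~\ref{prop_construction_griesmer_plus_2_singly_even} needs the largest index $j$ with $\varepsilon_j\neq 0$ to be at most $k-2=7$, but $d\le 64$ forces $\varepsilon_8=1$. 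Its output length $g_2(9,d)+2$ would in fact violate the proven lower bounds.
\end{itemize}
Hence your plan gives no upper bound at all for $d\in\{34,36,50,52\}$. For $d\in\{18,22,26\}$ it only points to sources, but the dimension-$9$ results of \cite{BouyuklievOstergaard2006Binary} stop at $d=16$. Falling back on Lemma~\ref{lemma_minus_double_point} would require non-projective doubly-even $[n+2,9,d+2]_2$ codes, which you do not have.

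The paper fills these entries with explicit computer-found codes:
\begin{itemize}
\item doubly-even (hence SO) codes from \texttt{BKLC} \cite{MagmaBKLC} for all doubly-even $d$;
\item the appendix codes $[45,9,18]_2$, $[70,9,32]_2$, $[77,9,34]_2$ and $[118,9,56]_2$;
\item the $[53,9,22]_2$, $[61,9,26]_2$ and $[109,9,50]_2$ SO codes of \cite{Lim2026ConstraintPreserving}.
\end{itemize}

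You also apply Lemma~\ref{lemma_minus_double_point} in the wrong direction. Removing a double point lowers $d$ by $2$. So the doubly-even codes of lengths $86,95,102,118,126,133$ settle $d=40,44,48,56,60,64$ directly, being SO themselves. Removing a double point from them gives the $d=38,42,46,54,58,62$ codes of Lemma~\ref{lemma_construction_aut}.
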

\begin{proof}
  Assume $d\ge 4$. Besides the Griesmer SO bound, the lower bounds are given by
  $n\ge 2k$ for $d=4$, $\tilde{n}_2(9,d)\ge n_2(9,d)$\footnote{Besides the data in \cite{grassl_codetables} we use the non-existence of a $[85,9,40]_2$ code \cite[Theorem 3.7]{kurz2021codeunique} and the non-existence of a $[93,9,44]_2$ code shown in Theorem~\ref{thm_no_93_9_44}.} for doubly-even $8\le d\le 64$,
  $\tilde{n}_2(9,d)\ge n_2(8,d+2)$ for singly-even $d\in\{6,\dots,58\}\backslash\{46\}$,
  Lemma~\ref{lemma_no_99_9_46_SO} for $d=46$,
  Lemma~\ref{lemma_no_130_9_62} for $d=62$, and
  Lemma~\ref{lemma_SO_non_existence_LinCode} or \cite{shimizu2026selforthogonal} for $d=22$. Constructions are given for doubly-even $8\le d\le 64$ in \cite{MagmaBKLC}, for $d\in\{2,4,6,10,14\}$ in \cite{BouyuklievOstergaard2006Binary}, for $d\in\{22,26,50\}$ in \cite{Lim2026ConstraintPreserving}, for $d\in\{18,34\}$ in Appendix~\ref{sec_generator_matrices_of_SO_codes}, and for $d\in\{18,38,42,46,54,58,62\}$ in Lemma~\ref{lemma_construction_aut}.
\end{proof}
The values $\tilde{n}_2(9,d)$ were previously determined in \cite{BouyuklievOstergaard2006Binary} for all $d\le 16$. A $[84,9,38]_2$ SO code was also found in \cite{Lim2026ConstraintPreserving}. To set the open cases of $\tilde{n}_2(9,d)$ into context, we remark that for even $d\le 64$ the exact values $n_2(9,d)$ are known if $d\le 28$ or $d\in\{32,40,50,52,64\}$. The fact that each doubly-even code is self-orthogonal and the divisibility results in Proposition~\ref{prop_opt_implies_divisibility} imply:
\begin{corollary}
  \label{cor_SO_equals_BLC}
  We have $\tilde{n}_2(9,d)=n_2(9,d)$ for $d\in \{36,56,60\}$.
\end{corollary}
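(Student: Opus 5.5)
The equality $\tilde{n}_2(9,d)=n_2(9,d)$ needs two inequalities. The coding lower bound $\tilde{n}_2(9,d)\ge n_2(9,d)$ holds trivially, because every SO code is a linear code. The work is therefore the upper bound: for each $d\in\{36,56,60\}$ we must show that some $[n_2(9,d),9,d]_2$ code exists and is self-orthogonal. Since every doubly-even binary code is SO, it is enough to prove that a length-optimal code is $4$-divisible. Proposition~\ref{prop_opt_implies_divisibility} does exactly this at the right lengths.

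For each $d$ we first pin down $n_2(9,d)$. For $d=36$, Theorem~\ref{thm_SO_k_9} gives $78\le\tilde{n}_2(9,36)\le 79$, and the lower bound $78$ there comes from $n_2(9,36)\ge 78$. If $n_2(9,36)=78$, an optimal code exists. By Proposition~\ref{prop_opt_implies_divisibility}, every $[78,9,36]_2$ code is $4$-divisible, hence SO, so $\tilde{n}_2(9,36)\le 78=n_2(9,36)$. If $n_2(9,36)=79$, the lower bound $\tilde{n}_2(9,36)\ge 79$ meets the known upper bound $79$, and equality holds again.

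The other two values go the same way. For $d=56$ the bracket is $117$--$118$, and Proposition~\ref{prop_opt_implies_divisibility} says every $[117,9,56]_2$ code is $8$-divisible, in particular doubly-even. For $d=60$ the bracket is $125$--$126$, and every $[125,9,60]_2$ code is $4$-divisible. In both cases, if $n_2(9,d)$ equals the lower end, any optimal code is SO. If instead $n_2(9,d)$ equals the upper end, the SO construction of that length already listed in Theorem~\ref{thm_SO_k_9} (from \cite{MagmaBKLC} for doubly-even $d$) gives equality.

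Before this dichotomy is valid, one step has to be checked. For each $d$, the lower end of the bracket must be exactly $n_2(9,d)$ whenever that value is attained, and the upper end must be realized by an SO code. Put differently, $n_2(9,d)\in\{n_0,n_0+1\}$ with $n_0\in\{78,117,125\}$, and an SO code of length $n_0+1$ exists. Both facts are exactly what the proof of Theorem~\ref{thm_SO_k_9} uses: the coding lower bound for doubly-even $d$, and the constructions from \cite{MagmaBKLC}. I expect this bookkeeping to be the only delicate point; once it is in place, the corollary reduces to divisibility implying self-orthogonality.
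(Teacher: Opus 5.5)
Your proof is correct and is essentially the paper's argument. The paper gives it in one line: every doubly-even code is SO, and Proposition~\ref{prop_opt_implies_divisibility} makes any code at the lower end $n_0\in\{78,117,125\}$ of the bracket doubly-even, while the doubly-even constructions used in Theorem~\ref{thm_SO_k_9} cover the upper end $n_0+1$; your explicit case split on $n_2(9,d)\in\{n_0,n_0+1\}$ simply unpacks that sentence.
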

\todo{Obtain divisibility results for putative $[101,9,48]_2$ and $[94,9,44]_2$ codes.}


\appendix

\section{Generator matrices of self-orthogonal codes}
\label{sec_generator_matrices_of_SO_codes}
In this section we list generator matrices for $SO$ codes found by computer searches. The examples of $4$-divisible non-projective SO codes are found by prescribing a subgroup of the automorphism group and solving an integer linear programming formulation. The singly-even SO codes were found by starting from some candidate for the doubly-even weight subcode and extension using \texttt{LinCode} \cite{bouyukliev2021computer}. Generator matrices with length at least $120$ are stated in octal format after appending them with zero
columns on the right to ensure that the length is a multiple of three.

A $[82,8,38]_2$ SO code is given by
$$
  \left(\begin{smallmatrix}
1110000000001111000000000111111110000000111111111111111100100101100011100011000000\\
0001111000000000111110000000000001111111111111111111111100000000011111100010100000\\
0000000110000000000001111111111111111111000000011111111100010011111100100100010000\\
0000000001111111111111111000011110001111000111100001111100001110100101000000001000\\
0111111110110011011110011001100110110011011001100110000111111100001110111110000100\\
1110011111110111000111101110111010000100100000100110011000000000011111011100000010\\
1010101111011101101010111011100101110100100011001010101111000000100110000010000001\\
1000111011101000011001001010001110011101011100100101001001111011011110001000000000\\
  \end{smallmatrix}
  \right).
$$
It has weight distribution $38^{83}   40^{108}   46^{42}   48^{17}   54^{3}   56^{2}$ and a trivial automorphism group.

A non-projective $4$-divisible $[84,8,40]_2$ code is given by
$$
  \left(\begin{smallmatrix}
000000000000000000000000000000000000111111111111111111111111111111111111111111111111\\
000000000000000000001111111111111111000000000000000000000000111111111111111111111111\\
000000000000111111110000000011111111000000000000111111111111000000000000111111111111\\
000000111111000011110000001100001111000000111111000011111111000000111111000000001111\\
000111001111001100010011110101110011000011000111000100001111000001000011000011110111\\
001001010011010001101100111000011100001101111001011000110001001110011101011100010001\\
011110000101101110110011011010100101010100001010101100000010010011100110100101101010\\
110010000011000110100101110110100010101010011110111001000100110110001011100011100001\\
  \end{smallmatrix}\right).
$$
It has weight distribution $40^{192} 48^{57} 56^6$ and an automorphism group of order $6$.

A $[113,8,54]_2$ SO code is given by
$$
  \left(\begin{smallmatrix}
11000000110000111100001111111100111100000000000011110000000011111111000000001111111111111111001001100111110000000\\
00110000001100000011111111111100000011110000000000001111000000000000111111111111111111111111000100011111101000000\\
00001100000011111111110000111100000000001111000000000000111111111111111111110000000011111111000011111001100100000\\
00000011111111001100110011001100000000000000111111111111111100001111000011110000111100001111001110101010100010000\\
00000000000000000000000000000010111111111111111111111111111111111111111111111111111111111111110000000000000001000\\
11111111111111111111111111111101001100110011001100110011001100110011001100110011001100110011110000000000000000100\\
01010101010101010101010101010111010101010101010101010101010101010101010101010101010101010101011111010100100000010\\
01011100101100101111100001011000110101000101001100101101100101101100100011101010001110110100111010000110100000001\\
  \end{smallmatrix}
  \right).
$$
It has weight distribution $54^{91}   56^{120}   62^{28}   64^{7}   70^{7}   78^{2}$ and an automorphism group of order $21$.

A non-projective $4$-divisible $[115,8,56]_2$ code is given by
$$
  \left(\begin{smallmatrix}
1000010001000000111100001111111111101111111111111111010000001111000111110011111111000011111111111111111111111111110\\
0100001000111100000011111111111111000000000000111111001000000000100000000000000000111111111111111111000011111111110\\
0010000100000011111111110000111111000000001111000011000100000000010000001111111111111100000000111111111100001111110\\
0001000011111111001100110011001111000011110111001100000010000000001111111100001111001100001111000011000100110011110\\
0000111110001101010101110101010111001100110001110000100000000011111001110100110011010000110011001100011100111100110\\
1111001001010101100101010110010111011100110110000011000001001100011110001011000011010100111100001100011011110011000\\
1111101111000011111011111111000001110101011011010101000000100011000000110000000011000000000000111111000011110011111\\
0110000011100111001101100010011001101110011000110110000000010101111010010001010101100101010101010101101101010101011\\
  \end{smallmatrix}\right).
$$
It has weight distribution $56^{210} 64^{36} 72^8 80^1$ and an automorphism group of order $42$.

A non-projective $4$-divisible $[124,8,60]_2$ code is given by
$$
  \left(\begin{smallmatrix}
777777777777777777777777777777777777777774\\
000000000000000000007777777777777777777774\\
000000000077777777770000000000777777777774\\
000007777700000777770000077777000007777774\\
003770037700177001770017700177003770001774\\
074170741703617036170360703607034170376074\\
314631146314663146231463114633144631406314\\
525253252525225252452525325255251252572524\\
  \end{smallmatrix}\right).
$$
It has weight distribution $60^{127} 64^{127} 124^1$ and an automorphism group of order $332943851520$. It can be obtained by adding the all-one codeword to the $[124,7,60]_2$ non-projective two-weight code obtained from Lemma~\ref{lemma_non_projective_2wt_2} with $k=7$.

A non-projective $4$-divisible $[140,8,68]_2$ code is given by
$$
  \left(\begin{smallmatrix}
00000000000000000000037777777777777777777777776\\
00000000001777777777740000000000037777777777776\\
00000377776000007777740000037777740000037777776\\
00177400776003770017740017740017740017740017776\\
03607417036074170360741760740360740760340760076\\
14631463146314631463146163143463143163443161736\\
74007417740077607400740777055400777020744666706\\
25252525252525252525252265264525245265511322112\\
  \end{smallmatrix}\right).
$$
It has weight distribution $68^{126}   72^{126}   76^2   128^1$ and an automorphism group of order $1152$.

A non-projective $4$-divisible $[147,8,72]_2$ code is given by
$$
  \left(\begin{smallmatrix}
0000000000000000000000377777777777777777777777777\\
0000000000017777777777400000000000017777777777777\\
0000003777760000077777400000077777760000003777777\\
0017774003760017700077400077700377760003774000777\\
0360174074160363701703403707703401760774074037017\\
3761614100623564702115474700114476361034104347377\\
5422224634260673116066415311724316467154700473063\\
4446461305625435226334125522361123522265331155125\\
  \end{smallmatrix}\right).
$$
It has weight distribution $72^{210} 80^{36} 88^6 96^3$ and an automorphism group of order $18$.

A non-projective $4$-divisible $[155,8,76]_2$ code is given by
$$
  \left(\begin{smallmatrix}
0000000000000000000000000777777777777777777777777776\\
0000000000377777777777777000000000017777777777777776\\
0000017777400000017777777000003777760000000177777776\\
0017760377400377760017777007774037760007777600077776\\
0360761403403401760760377170374140360170017607700776\\
1463163475435476167161437273435643563630363631717036\\
6421761714006516217202157031542705674651465612361046\\
3557224526557626661464663756657216425372536664526312\\
  \end{smallmatrix}\right).
$$
It has weight distribution $76^{204} 80^{31} 92^{19} 108^1$ and an automorphism group of order $6$.


A non-projective $4$-divisible $[171,8,84]_2$ code is given by
$$
  \left(\begin{smallmatrix}
000000000000000000000000077777777777777777777777777777777\\
000000000000077777777777700000000000000007777777777777777\\
000000077777700000077777700000000777777770000000077777777\\
000077700077700077700077700007777000077770000777700007777\\
017703701701700703701701701770077007701770017003701770077\\
060714306316307134316306316070707070716070363074316070707\\
363101713766411754466011766311133113366311477314466311133\\
525224520533123325522522532533311231122722525525522532215\\
  \end{smallmatrix}\right).
$$
It has weight distribution $84^{186} 88^{30}   92^{37}   96^{1}   124^{1}$ and an automorphism group of order $240$.

A non-projective $4$-divisible $[186,8,92]_2$ code is given by
$$
  \left(\begin{smallmatrix}
00000000000000000000000000000077777777777777777777777777777777\\
00000000000000037777777777777700000000000000007777777777777777\\
00000001777777740000000777777700000001777777770000000037777777\\
00007776000177740007777000177700017776000077770000777740003777\\
00770076017603740370077007603700760176007700770177003740374077\\
17171716363634741410303030604317163636371717170607014141414103\\
63030366074754147461417141635463020746076363031630060746074714\\
25252522525265252522525252254525245252512525253253125252525325\\
  \end{smallmatrix}
  \right).
$$
It has weight distribution $92^{189} 96^{63} 124^3$ and an automorphism group of order $16515072$. The three codewords of weight $124$ form a $62$-fold repetition of the $[3,2,2]_2$ simplex code, i.e.\ they have full support. Removing these codewords, i.e.\ taking the largest subcode not containing them, gives a non-projective two-weight code, see e.g.\ \cite{Kurz2024NonProjective},  with weight distribution $92^{48} 96^{15}$ and an automorphism group of order $4608$. A generator matrix is given by
$$
  \left(\begin{smallmatrix}
00000000000000000000000000000077777777777777777777777777777777\\
00007776000177740007777000177700017776000077770000777740003777\\
00770076017603740370077007603700760176007700770177003740374077\\
17171716363634741410303030604317163636371717170607014141414103\\
63030366074754147461417141635463020746076363031630060746074714\\
25252522525265252522525252254525245252512525253253125252525325\\
   \end{smallmatrix}\right)
$$
Let the multiset $\cM$ correspond to the shortened full-length $[183,6,92]_2$ code. Writing $V$ for the ambient space $\PG(5,2)$ and using two suitable disjoint sets of points $\cM_1$, $\cM_2$, we can write $\cM=3\cdot \chi_V-\cM_1+\cM_2$, so that we have $\cM(P)=2$ iff $\cM_1(P)=1$ and $\cM(P)=4$ iff $\cM_2(P)=1$. The set of points $\cM_1+\cM_2$ corresponds to a projective $[12,5,4]_2$ SO code with weight distribution $4^6 6^{16} 8^9$ and an automorphism group of order $1152$. A generator matrix is given by
$$
  \left(\begin{smallmatrix}
000000111111\\
011011000110\\
100111001001\\
111001011011\\
001100110011\\
  \end{smallmatrix}\right).
$$
We remark $\tilde{n}_2(5,4)=11$ and that there are five non-equivalent $[12,5,4]_2$ SO codes \cite{BouyuklievOstergaard2006Binary}.
\todo{Describe this code geometrically.}

\bigskip
\bigskip

A $[45,9,18]_2$ SO code is given by
$$
  \left(\begin{smallmatrix}
111111111000000111111000100011111110010000000\\
111111000111000111000111010011111001101000000\\
000000000000111111111111001000011111100100000\\
111111111111111111000000000100000111100010000\\
000111001001011000011011111100101010100001000\\
001011010010101011001001001111101011000000100\\
000101100110010101010000111001110111100000010\\
110110010011110001100101100111010010000000001\\
010011111011010111110110111110011011100000000\\
  \end{smallmatrix}
  \right).
$$
It has weight distribution $18^{60} 20^{135} 22^{115} 24^{79} 26^{71} 28^{37} 30^{9}   32^4 34^1$ and a trivial automorphism group.

A non-projective $4$-divisible $[70,9,32]_2$ code is given by
$$
  \left(\begin{smallmatrix}
0100000000110010001011100110100110001100000001001111101011010111011101\\
0010010000000100110011100110111110100000010101101100010101100011110011\\
1001010001110000000010010001111011111110000100001011010011000101110011\\
0000110001000110111110110000011010000101010010011101010010001101011101\\
0000000100010110010111000100010001010111101110100110010110101100110011\\
0000001011100110011000110111101110100111101110010110011001011111110011\\
0000000000001110101011010011110111101111011110000101111011100000000000\\
1000001000010111101111100010011000000000101011010010010101010111101110\\
1000001000010110110000011100010110110111001000101101100001000111101110\\
  \end{smallmatrix}
  \right).
$$
It has weight distribution $32^{318} 40^{192} 64^1$ and an automorphism group of order $774144$.

A $[77,9,34]_2$ SO code is given by
$$
  \left(\begin{smallmatrix}
00000000000000000000000000000011111111111111111111111111111111111111111110001\\
00000000000000001111111111111100000000000000000000011111111111111111111110000\\
00001111111111110000011111111100000011111111111111100000000000111111111110010\\
00010000001111110011100000111100011100000001111111100000011111000001111110000\\
00110000110001110100100011001100101100011110000011100001100011011110011110100\\
01110111110110010000000101000101100000100110000100101110101101101111100110000\\
11000011111110001011001001001001010101001010011111100011101100110010011111000\\
11101101010010110101000010010011101001110000101001000101110111000110101010000\\
11111011000001011000010110011100001101100100110101010110011011000000110000000\\
  \end{smallmatrix}
  \right).
$$
It has weight distribution $34^{97} 36^{147} 38^{67} 40^{61} 42^{61} 44^{37} 46^{25}   48^6 50^6 52^4$ and a trivial automorphism group. The doubly-even $[77,8,36]_2$ subcode has an automorphism group of order $6$. We remark that there exist more than $62000$ optimal $[77,8,36]_2$ codes, but we do not know an example with a larger automorphism group.


A non-projective $4$-divisible $[118,9,56]_2$ code is given by
$$
  \left(\begin{smallmatrix}
1000000000000000000001111110000000110000000111111111111100000001111111111110000000111111111111100000001100000001111111\\
0110011000000000000001011101101001101100110111000010110111010011111001010101100110100000011100110101011110101011010000\\
0001111000000000000000111111011010010111100011100101010010110100010111010100111100001100010001101100110101100110011110\\
0000000101010100000000010110110011110110011000010011010010110101110111101111101001001011001000010101011110110101101000\\
0000000011001100000000110010001111100001111101101011101010101010110000001101011010010111010110101100110110101011001100\\
0000000000111100000001100001100110001100110001001011001101100110000101011111010101001100101101100011111101100111111001\\
0000000000000010101011110101011010101101001011111010100001100110001100110100110011111101100010010110101010101010011010\\
0000000000000001100110000001010101101011010000111101011000011111011001010100001111101111011110010101010001100111011101\\
0000000000000000011110111000110011001010101110110101101011001101110010111101100110100101111001001100111000011110100000\\
  \end{smallmatrix}\right)
$$
It has weight distribution $56^{348} 64^{127} 72^{36}$ and an automorphism group of order $384$.

\end{document}